\theoremstyle{plain}
\newtheorem{theorem}{Theorem}[section]
\newtheorem{lemma}{Lemma}[section]
\newtheorem{remark}{Remark}[section]
\newtheorem{corollary}{Corollary}[section]
\newenvironment{Proof of Proposition}{\begin{proof}[Proof of Proposition 2.1:]}{\end{proof}}
\newenvironment{Proof of Theorem 2.1}{\begin{proof}[Proof of Theorem 2.1:]}{\end{proof}}
\newenvironment{Proof of Theorem 3.1}{\begin{proof}[Proof of Theorem 3.1:]}{\end{proof}}
\newif \ifLastSection \LastSectionfalse
\numberwithin{equation}{section}
\begin{document}

\title{{\bf {Convergence to diffusion waves for solutions of 1D Keller-Segel model}}}

\author[1]{Fengling Liu}
\author[2]{Nangao Zhang}
\author[3]{Changjiang Zhu\thanks{Corresponding author. \authorcr Email addresses: mathflliu@mail.scut.edu.cn (liu), mazhangngmath@mail.scut.edu.cn (zhang),\\    machjzhu@scut.edu.cn (zhu).}}
\affil[1,2,3]{\normalsize  School of Mathematics, South China University of Technology, Guangzhou 510641, P.R. China}

\date{}

\maketitle

\textbf{{\bf Abstract:}}
In this paper, we are concerned with the asymptotic behavior of solutions to the Cauchy problem (or initial-boundary value problem) of one-dimensional Keller-Segel model. For the Cauchy problem, we prove that the solutions time-asymptotically converge to the nonlinear diffusion wave whose profile is self-similar solution to the corresponding parabolic equation, which is derived by Darcy's law, as in \cite{Hsiao-Liu1992, Nishihara1996}. For the initial-boundary value problem, we consider two cases: Dirichlet boundary condition and null-Neumann boundary condition on $(u, \rho)$. In the case of Dirichlet boundary condition, similar to the Cauchy problem, the asymptotic profile is still the self-similar solution of the corresponding parabolic equation, which is derived by Darcy's law, thus we only need to deal with boundary effect. In the case of null-Neumann boundary condition, the global existence and asymptotic behavior of solutions near constant steady states are established. The proof is based on the elementary energy method and some delicate analysis of the corresponding asymptotic profiles.
\bigbreak \textbf{{\bf Key Words}:} Keller-Segel model, Darcy's law, nonlinear diffusion waves, asymptotic behavior.

\bigbreak  {\textbf{AMS Subject Classifications:} 85A25, 35L65, 35B40.}

\section{Introduction}
In 1970, E.F. Keller and L.A. Segel proposed a model to describe the aggregation process of cellular slime mold by the chemical attraction in their celebrated work \cite{Keller-Segel1970}. The model is now known as the Keller-Segel model, which can be written into the following
form:
\begin{equation}\label{1.1}
\left\{\begin{array}{l}
\partial_{t} u=a\Delta u-\kappa\nabla \cdot(u \nabla \rho),\\[2mm]
\partial_{t} \rho=b\Delta \rho+\mu u-\lambda \rho.
\end{array}\right.
\end{equation}
Here $u=u(x,t)$ denotes the density of bacteria and $\rho=\rho(x,t)$ denotes the concentraction of chemical substance which mediates the aggregation. $a$, $b$, $\lambda$, $\mu$ and $\kappa$ are positive constants. $a$ and $b$ are respectively the diffusion coefficients of bacteria and of chemical substance. $\lambda$ is a constant rate of decrease of the chemical substance. $\mu$ is a constant rate of the chemical substance production by the bacteria and $\kappa$ denotes the intensity of chemotaxis.

As an important biological model, the Keller-Segel model \eqref{1.1} has attracted great interest among many scholars and there have been many important developments.
In one-dimension case, for the Neumann initial-boundary value problem of model \eqref{1.1}, Osaki and Yagi in \cite{Osaki-Yagi2001} proved that the solutions of the model are global and uniformly bounded if the inital data are smooth sufficiently. Afterwards Iwasaki, Osaki and Yagi in \cite{Iwasaki-Osaki-Yagi2020} showed that every solution must converge to a stationary solution by using the Lojasiewicz-Simon gragient inequality of the Lyapunov function. For the Cauchy problem of model \eqref{1.1}, Nagai and Yamada in \cite{Nagai-Yamada2007} gave the large time behavior of bounded solutions.
In two-dimensioanl model, Nagai, Syukuinn and Umesako in \cite{Nagai-Syukuinn-Umesako2003} proved that every bounded solution to the Cauchy problem decays to zero as $t$ goes infinity with small data $u_{0}\in L^{1}(\mathbb{R}^{2}) \cap L^{\infty}(\mathbb{R}^{2})$ and $\nabla\rho_{0}\in L^{1}(\mathbb{R}^{2}) \cap L^{\infty}(\mathbb{R}^{2})$, and that the solution behaves like the heat kernel as the self-similar profile.
Calvez and Corrias in \cite{Calvez-Corrias2008} showed that under additional assumptions $u_0\log(1+|x|^2)\in L^1(\mathbb{R}^2)$ and $u_0\log u_0\in L^1(\mathbb{R}^2)$, any solution with $m(u_0; \mathbb{R}^2)<8\pi$ to the Cauchy problem exists globally in time.
Mizoguchi in \cite{Mizoguchi2013} proved that the global existence of solutions without the additional assumptions when $m(u_0; \mathbb{R}^2)<8\pi$. Later, the result in \cite{Biler-Guerra-Karch2015} meant that any critical mass may lead to a global-in-time solution.
In $n$-dimensional case ($n\geq 3$), Corrias and Perthame in \cite{Corrias-Perthame2006} proved existence of weak positive solutions to the Cauchy problem by taking small data $u_{0}\in L^{r}(\mathbb{R}^{n})$ and $\nabla\rho_{0}\in L^{n}(\mathbb{R}^{n})$ with $\frac{n}{2}<r\leq n$. If the norms $\|u_0\|_{L^{\frac{n}{2}}(\Omega)}$ and $\|\nabla\rho_0\|_{L^{n}(\Omega)}$ are suitably small, it was proved that the global bounded solutions exist, see \cite{Cao2015, Winkler2010} as well as references cited therein. In the case of large initial data, Winkler in \cite{Winkler2013} constructed a Lyapunov functional to prove that the solutions blow up in finite time. For other results related to \eqref{1.1}, we refer to the interesting works and the references therein, cf. \cite{Gajewski-Zacharias1998, Herrero-Velazquez1997, Horstmann2003, Nagai-Senba-Yoshida1997, Schaaf1985}.

However, we can observe that in the unbounded region, the above results require that the initial data be the same constant at infinity, specifically zero at infinity.  The different states of the initial data at infinity, as we know, have not been studied so far. In this paper, we will consider this problem by the methods introduced by Hsiao and Liu in \cite{Hsiao-Liu1992}. And it turns out that the solution to \eqref{1.1} will converge to the nonlinear diffusion waves.

Precisely, we shall restrict ourselves to the one-dimensional Keller-Segel model (cf. \cite{Iwasaki-Osaki-Yagi2020, Osaki-Yagi2001}):
\begin{equation}\label{1.2}
\left\{\begin{array}{l}
u_t
   -au_{xx}
   +\kappa[u \rho_x]_x=0,\\[2mm]
\rho_t
    -b\rho_{xx}
    +\lambda\rho
    -\mu u=0,\quad x\in \Omega,\quad t>0,
 \end{array}
        \right.
\end{equation}
where $\Omega=\mathbb{R}$  or $\mathbb{R}^+$. And we will consider the Cauchy problem on $\mathbb{R}$ and the initial-boundary value problem on $\mathbb{R}^+$ of \eqref{1.2}.

For the Cauchy problem, the initial data are given by
\begin{equation}\label{1.3}
\left(u,\rho)\right(x,0)=\left(u_{0},\rho_{0}\right)(x)\rightarrow \left(u_{\pm}, \rho_{\pm}\right), \ \ \ \ \text{as}\quad x\rightarrow \pm\infty,
\end{equation}
where $u_{+} \neq u_{-}$, $\rho_{+} \neq \rho_{-}$ and $u_{\pm}=\frac{\lambda}{\mu}\rho_{\pm}$.

For the initial-boundary value problem, we consider the initial data
\begin{equation}\label{1.6}
\left(u,\rho)\right|_{t=0}=\left(u_{0},\rho_{0}\right)(x) \rightarrow \left(u_{+}, \rho_{+}\right), \ \ \ u_{+}>0, \ \ u_{+}=\frac{\lambda}{\mu}\rho_{+}, \ \ \text{as}\quad x\rightarrow +\infty,
\end{equation}
and one of the following boundary conditions:

(1) Dirichlet boundary condition
\begin{equation}\label{1.7}
u|_{x=0}=\beta, \quad \rho|_{x=0}=\frac{\mu \beta}{\lambda},
\end{equation}
where the constant $\beta$ takes a value on $[u_-,u_+]$ if $u_-<u_+$ (or $[u_+,u_-]$ if $u_->u_+$);

(2) Null-Neumann boundary condition
\begin{equation}\label{1.8}
u_x|_{x=0}=0, \quad \rho_x|_{x=0}=0.
\end{equation}

We expect to prove that the solutions of the Cauchy problem \eqref{1.2}, \eqref{1.3} and the initial-boundary value problem \eqref{1.2}, \eqref{1.6}, \eqref{1.7} (or \eqref{1.2}, \eqref{1.6}, \eqref{1.8}) converge to the nonlinear diffusion waves as in Hsiao and Liu \cite{Hsiao-Liu1992}.

Regarding the convergence theory on the nonlinear diffusion waves, it is necessary for us to briefly review its development history. For the Cauchy problem, it was studied by Hsiao and Liu in \cite{Hsiao-Liu1992} for the first time. They proved that the solutions of the Cauchy problem for $p$-system with damping converge time-asymptotically to the nonlinear diffusion waves whose profile is self-similar solution to the corresponding parabolic equation, which is derived by Darcy's law.
Then, by more detailed and accurate energy estimates, Nishihara in \cite{Nishihara1996} generalized the result of Hsiao and Liu in \cite{Hsiao-Liu1992}, and obtained a more precise convergence rates.
Furthermore, by constructing an appropriate approximate Green function with the energy method together, Nishihara, Wang and Yang in \cite{Nishihara-Wang-Yang2000} further improved the convergence rates, which is optimal in the sense comparing with the heat equation.
 For other results, we refer to \cite{Geng-Wang2010, Huang-Pan2003, Huang-Pan2006, Mei2010, Wang-Yang2003}, and the references therein.
 For the initial-boundary value problem on a half line $\mathbb{R}^+$, Nishihara and Yang in \cite{Nishihara-Yang1999} considered the asymptotic behavior of solutions of the initial-boundary value problem on $\mathbb{R}^+$ to the equations of $p$-system with linear damping, and obtained the $L^2$ and $L^\infty$ convergence rates. Later, Marcati, Mei and Rubino in \cite{Marcati-Mei-Rubino2005} improved the $L^2$ convergence rates of \cite{Nishihara-Yang1999}.  For other results, see \cite{Jiang-Zhu2009, Lin-Lin-Mei2010, Ma-Mei2010, Marcati-Mei2000} and references cited therein.

Motivated by these preceding results, we shall prove, for the Cauchy problem \eqref{1.2}-\eqref{1.3}, that the solutions globally exist and time-asymptotically converge to the nonlinear diffusion waves, which is self-similar solution to the corresponding parabolic equation given by Darcy's law.
For the initial-boundary value problem, we consider two cases: Dirichlet boundary condition \eqref{1.7} or null-Neumann boundary condition \eqref{1.8}. In the case of Dirichlet boundary condition \eqref{1.7}, similar to the Cauchy problem, the asymptotic profile is still the self-similar solution of the corresponding parabolic equation, thus we only need to deal with boundary effect. In the case of null-Neumann boundary condition \eqref{1.8}, the global existence and asymptotic behavior of solutions near constant steady states are established.

Finally, we briefly give some remarks on our problem and review some key analytical techniques. Firstly, for the Cauchy problem,
the main difficult step lies in obtaining the zero-order energy estimates. On the one hand, we find that it is difficult to handle the bad term $\int_0^t\int_{\mathbb{R}}wz\mathrm{d}x\mathrm{d}\tau$ since we can not obtain the uniform-in-time estimates for $\int_0^t\int_{\mathbb{R}}w^2\mathrm{d}x\mathrm{d}\tau$ and $\int_0^t\int_{\mathbb{R}}z^2\mathrm{d}x\mathrm{d}\tau$. In order to overcome such a difficulty, we try to use the structure of the reformulated equations to produce a time-space integrable good term $\int_0^t\int_{\mathbb{R}}(\lambda w-\mu z)^2\mathrm{d}x\mathrm{d}\tau$ (see \eqref{2.3.5}-\eqref{2.3.7} in Lemma \ref{Lemma 2.3}).
On the other hand, we need to treat the term $\int_0^t\int_{\mathbb{R}}z^2\bar\rho_x^2\mathrm{d}x\mathrm{d}\tau$.
By a heuristic analysis, we realize that $\int_0^t\int_{\mathbb{R}}z^2\bar\rho_x^2\mathrm{d}x\mathrm{d}\tau$ can be transformed into $\int_0^t\int_{\mathbb{R}}z^2\widetilde\omega^2\mathrm{d}x\mathrm{d}\tau$ (see \eqref{2.3.15} in Lemma \ref{Lemma 2.3}), and then we can just control it by using  new estimate based on the argument in \cite{Huang-Li-Matsumura2010}. See Lemma \ref{Lemma 2.2} and Corollary \ref{Corollary 2.1} for details.
These two aspects are very vital for obtaining the zero-order energy estimates, and are conducive to obtaining the higher-order energy estimates. One can see the details of the zero-order energy estimates in Lemma \ref{Lemma 2.3}.

Secondly, in the case of Dirichlet boundary condition, we construct still the self-similar diffusion waves. Therefore, the zero-order and first-order energy estimates are similar to Cauchy problem, and we need only to consider the treatment of the boundary effect.
However, for the second-order energy estimates, due to the difficulties in dealing with some boundary terms, we no longer use the methods applied in the Cauchy problem but mainly use the structure of the reformulated equations \eqref{3.1.2} to close the {\it a priori} assumption. See Lemma \ref{Lemma 3.6}-Lemma \ref{Lemma 3.7} for more details.
In the case of Neumann boundary condition, we consider the global existence and asymptotic behavior of solutions near constant steady states $(u_+, \rho_+)$. The analysis is also quite similar to Cauchy problem.

The rest of the paper is organized as follows. In Section \ref{S2}, we consider the Cauchy problem for the system \eqref{1.2}. In Section \ref{S2.1}, the Cauchy problem is reformulated and main result will be stated. In Section \ref{S2.2}, we prepare some preliminaries, which will be useful in the proof of our theorem. Section \ref{S2.3} is devoted to the proof of our theorem. In Section \ref{S3}, we show that the corresponding initial-boundary value problem admits a unique global smooth solution. In Subsection \ref{S3.1}, we will obtain the convergence in the case of Dirichlet boundary condition; In Subection \ref{S3.2}, we will study the null-Neumann boundary problem.

{\bf Notations}: Hereafter, $C$ denotes some generic positive constants which are only dependent of the initial data and may vary from line to line. $C_\eta$ denotes the generally large positive constant depending on $\eta$. $L^{p}=L^{p}(\Omega) ~ (1 \leq p \leq \infty)$ denotes the Lebesgue space with the norm
\begin{equation}\notag
 \|f\|_{L^p}=\left(\int_{\Omega}|f(x)|^p\mathrm{d}x\right)^{\frac{1}{p}},\quad 1\leq p<\infty,
 \end{equation}
 \begin{equation}\notag
 \|f\|_{L^{\infty}}=\sup_{\Omega}|f(x)|.
 \end{equation}
For any integer $m \geq 0$, $H^{m}(\Omega)$ denotes the usual Sobolev space with the norm
\begin{equation}\notag
\|f\|_m=\left(\sum_{k=0}^m\|\partial_{x}^{k}f\|^2\right)^{\frac{1}{2}},
\end{equation}
where $\|\cdot\|=\|\cdot\|_{0}=\|\cdot\|_{L^{2}(\Omega)}$, $\Omega=\mathbb{R}$  or $\mathbb{R}^+$.

\section{Cauchy problem}\label{S2}
{\numberwithin{equation}{subsection}

\subsection{Reformulation of the Cauchy problem and main result}\label{S2.1}
We first reformulate the Cauchy problem \eqref{1.2}-\eqref{1.3}. From Darcy's law and asymptotic analysis, we notice that the first term $\rho_t$ and the second term $-b\rho_{xx}$ have a faster time-decay with respect to the term $\lambda\rho$. Therefore, we expect the solutions of \eqref{1.2} time-asymptotically behave as those of the following system
\begin{equation}\label{2.1.1}
\left\{\begin{array}{l}
\bar u_t
  =\displaystyle
   a\bar u_{xx}
   -\kappa[\bar u \bar \rho_x]_x,\\[2mm]
   \lambda\bar{\rho}-\mu\bar{u}=0,
   \end{array}
        \right.
\end{equation}
or
\begin{equation}\label{2.1.2}
\left\{\begin{array}{l}
\bar u_t
   -
 [(a-\frac{\kappa\mu}{\lambda}\bar u )\bar u_x]_x=0,\\[2mm]
\bar{\rho}
   =\displaystyle
    \frac{\mu}{\lambda}\bar{u}.
 \end{array}
        \right.
\end{equation}
Motivated by \cite{Hsiao-Liu1992,Nishihara1996}, we denote $\bar{u}$ by any solutions of \eqref{2.1.1} with the same end states as $u(x,0)$:
\begin{equation}\label{2.1.3}
\bar u(\pm\infty,t)=u_\pm,
\end{equation}\\
and set
\begin{equation}\label{2.1.4}
\bar \rho(\pm\infty,t)=\frac{\mu}{\lambda}u_{\pm}=\rho_\pm,
\end{equation}\\
due to the Darcy's law.

Combining \eqref{1.2} and \eqref{2.1.1}, we get
\begin{equation}\label{2.1.5}
\left\{\begin{array}{l}
(u-\bar{u})_t
   -a(u-\bar{u})_{xx}
   +\kappa[u \rho_x]_x-\kappa[\bar u \bar \rho_x]_x=0,\\[2mm]
\rho_t
    -b\rho_{xx}
    +\lambda(\rho-\bar{\rho})
    -\mu(u-\bar{u})=0.
 \end{array}
        \right.
\end{equation}
Setting the perturbation\\
\begin{equation}\label{2.1.6}
(w,z)(x,t)=(\rho-\bar{\rho},u-\bar u)(x,t),
\end{equation}
we have the reformulated problem
\begin{equation}\label{2.1.7}
\left\{\begin{array}{l}
z_t
   -az_{xx}
   +\kappa[(z+\bar{u})w_x+z\bar \rho_x]_x=0,\\[2mm]
w_t
    -bw_{xx}
    +\lambda w
    -\mu z+\bar\rho_t
    -b\bar\rho_{xx}=0,\\[2mm]
 \end{array}
        \right.
\end{equation}
with initial data
\begin{equation}\label{2.1.8}
\left(w,z)\right|_{t=0}=\left(w_{0},z_{0}\right)(x) \rightarrow 0\quad
 \text {as}
  \quad
  x\rightarrow \pm\infty.
\end{equation}
\begin{theorem}\label{Thm 2.1} {\bf (Cauchy problem).}
Suppose that both $\delta :=\left|\rho_{+}-\rho_{-}\right|+\left|u_{+}\right|+\left|u_{-}\right|$ and $\left\|w_{0}\right\|_{2}+\left\|z_{0}\right\|_{2}$ are sufficiently small. Then, the Cauchy problem \eqref{2.1.7}-\eqref{2.1.8} exists a unique time-global solutions $(w,z)(x,t)$, which satisfies
\begin{equation}\notag
w \in W^{i, \infty}\left([0, \infty) ; H^{2-i}\right), \quad i=0,1,2,~~~
z \in W^{i, \infty}\left([0, \infty) ; H^{2-i}\right), \quad i=0,1,2,	
\end{equation}

and
\begin{equation}\label{2.1.9}
\begin{split}
&\sum_{k=0}^{2}(1+t)^{k}\left(\|\partial_{x}^{k} w(t)\|^{2}+\|\partial_{x}^{k} z(t)\|^{2}\right)
  +(1+t)^{2}\left(\|w_t(t)\|^{2}+\|z_t(t)\|^{2}\right)
   \\
   &
    +
    \int_{0}^{t}\bigg[\sum_{j=0}^{2}(1+\tau)^{j}\left(\|\partial_{x}^{j+1} w(\tau)\|^{2}+\|\partial_{x}^{j+1} z(\tau)\|^{2}+\|\partial_x^j(\lambda w-\mu z)(\tau)\|^2\right)\\
    &+(1+\tau)^{2}\left(\|w_{xt}(\tau)\|^{2}+\|z_{xt}(\tau)\|^{2}+\|(\lambda w_t-\mu z_t)(\tau)\|^2\right)\bigg]d \tau\\
     \leq&
       C\left(\|w_{0}\|_{2}^{2}+\|z_{0}\|_{2}^{2}+\delta\right).
\end{split}
\end{equation}
\end{theorem}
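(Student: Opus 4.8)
The plan is the classical one: establish local-in-time existence for the reformulated problem \eqref{2.1.7}--\eqref{2.1.8} in the space $(w,z)\in C([0,T];H^2)$ by a standard iteration/fixed-point argument (this is Proposition 2.1), and then extend it to $[0,\infty)$ by uniform \emph{a priori} estimates closed through a continuation argument. Introduce the \emph{a priori} assumption $N(T)\le\varepsilon_0$, where
\begin{equation}\notag
N(T)^2:=\sup_{0\le t\le T}\Big[\sum_{k=0}^{2}(1+t)^{k}\big(\|\partial_x^k w\|^2+\|\partial_x^k z\|^2\big)+(1+t)^2\big(\|w_t\|^2+\|z_t\|^2\big)\Big],
\end{equation}
and aim to prove $N(T)^2+(\text{space-time dissipation integrals})\le C(\|w_0\|_2^2+\|z_0\|_2^2+\delta)$ whenever $\delta$ and $\|w_0\|_2+\|z_0\|_2$ are small, which closes the assumption and yields both the solution and \eqref{2.1.9}. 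All estimates use the standard pointwise and $L^2$ decay of the self-similar diffusion wave $\bar u$ (and $\bar\rho=\tfrac{\mu}{\lambda}\bar u$), e.g.\ $\|\bar u_x(t)\|_{L^\infty}\lesssim\delta(1+t)^{-1/2}$ and $\|\partial_x^{k+1}\bar u(t)\|\lesssim\delta(1+t)^{-k/2-1/4}$, together with the fact that the forcing $\bar\rho_t-b\bar\rho_{xx}=\tfrac{\mu}{\lambda}\big((a-b)\bar u_{xx}-\kappa[\bar u\bar\rho_x]_x\big)$ carries a factor $\delta$ and is time-space integrable at every weight used below.

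The core of the argument is the zero-order estimate. Multiplying the first equation of \eqref{2.1.7} by $z$, the second by a suitable multiple of $w$, adding, and integrating over $\R\times(0,t)$, the parabolic terms produce the dissipation $\int_0^t\!\!\int_\R(z_x^2+w_x^2)\,dx\,d\tau$, while the reaction coupling leaves the sign-indefinite term $\int_0^t\!\!\int_\R wz\,dx\,d\tau$, which cannot be absorbed because no uniform-in-time bound on $\int_0^t\!\!\int_\R w^2$ or $\int_0^t\!\!\int_\R z^2$ is available. The remedy is to exploit the structure of \eqref{2.1.7} to generate the genuinely good, time-space integrable term $\int_0^t\!\!\int_\R(\lambda w-\mu z)^2\,dx\,d\tau$ (cf.\ Lemma \ref{Lemma 2.3}). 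The other delicate term is $\int_0^t\!\!\int_\R z^2\bar\rho_x^2\,dx\,d\tau$, coming from the chemotactic flux $\kappa[z\bar\rho_x]_x$; rewriting $\bar\rho_x$ through the self-similar profile turns it into $\int_0^t\!\!\int_\R z^2\widetilde\omega^2\,dx\,d\tau$, which is controlled by the weighted inequality of Lemma \ref{Lemma 2.2} and Corollary \ref{Corollary 2.1} (the argument of \cite{Huang-Li-Matsumura2010}), at the cost only of a small fraction of the dissipation plus already-controlled quantities. Every other nonlinear term (from $\kappa[zw_x]_x$, $\kappa[\bar u\,w_x]_x$, etc.) is estimated by Cauchy--Schwarz, the 1D Sobolev inequality $\|f\|_{L^\infty}\le\sqrt2\|f\|^{1/2}\|f_x\|^{1/2}$, and the smallness of $\varepsilon_0+\delta$, hence absorbed, giving $\|w(t)\|^2+\|z(t)\|^2+\int_0^t\!\!\int_\R\big(w_x^2+z_x^2+(\lambda w-\mu z)^2\big)\,dx\,d\tau\le C(\|w_0\|^2+\|z_0\|^2+\delta)+C(\varepsilon_0+\delta)N(T)^2$.

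For the higher-order estimates, differentiate \eqref{2.1.7} once and twice in $x$, multiply by $(1+t)\partial_x(w,z)$ and $(1+t)^2\partial_x^2(w,z)$ respectively, and integrate; integrating the weight by parts in $t$ produces lower-weight terms $\int_0^t(1+\tau)^{k-1}\|\partial_x^k(w,z)\|^2\,d\tau$ absorbed by the preceding order, the good term $(1+\tau)^k\|\partial_x^k(\lambda w-\mu z)\|^2$ is again obtained from the structure, and the weighted versions of the $z^2\bar\rho_x^2$-type terms are again handled by Lemma \ref{Lemma 2.2}/Corollary \ref{Corollary 2.1}. Reading the time derivatives off the equations, $z_t=az_{xx}-\kappa[(z+\bar u)w_x+z\bar\rho_x]_x$ and $w_t=bw_{xx}-\lambda w+\mu z-(\bar\rho_t-b\bar\rho_{xx})$, one then bounds $(1+t)^2(\|w_t\|^2+\|z_t\|^2)$ and $\int_0^t(1+\tau)^2(\|w_{xt}\|^2+\|z_{xt}\|^2+\|\lambda w_t-\mu z_t\|^2)\,d\tau$ directly in terms of quantities already controlled. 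Summing all orders gives $N(T)^2+(\text{dissipation})\le C(\|w_0\|_2^2+\|z_0\|_2^2+\delta)+C(\varepsilon_0+\delta)N(T)^2$; choosing $\varepsilon_0$ and $\delta$ small enough that $C(\varepsilon_0+\delta)\le\tfrac12$ closes the \emph{a priori} assumption, and the continuation argument extends the local solution to $[0,\infty)$, establishing \eqref{2.1.9}.

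I expect the zero-order step to be the main obstacle, specifically the simultaneous treatment of $\int_0^t\!\!\int_\R wz\,dx\,d\tau$ and $\int_0^t\!\!\int_\R z^2\bar\rho_x^2\,dx\,d\tau$. The first is borderline because the natural dissipation only ever controls the combination $\lambda w-\mu z$, never $w$ and $z$ individually, so the good term must be extracted from the reformulated equations rather than from a naive energy identity; the second is borderline because $\bar\rho_x\sim\delta(1+t)^{-1/2}$ in $L^\infty$ is exactly critical for a Gronwall/absorption argument, so a crude bound fails and the sharper weighted estimate of Lemma \ref{Lemma 2.2}/Corollary \ref{Corollary 2.1} is essential. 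The delicate point throughout is to make both contributions come out with a \emph{small} constant (proportional to $\varepsilon_0+\delta$), not merely a finite one, since this is what lets the whole hierarchy of weighted estimates close.
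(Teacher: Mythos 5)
Your strategy for the spatial estimates is the paper's: the same reformulation, the same \emph{a priori} assumption, the same two key devices (extracting the good term $\int_0^t\!\int_{\mathbb R}(\lambda w-\mu z)^2\,\mathrm{d}x\,\mathrm{d}\tau$ from the combination $\lambda w\times\eqref{2.1.7}_2-\mu z\times\eqref{2.1.7}_2$ together with $\eqref{2.1.7}_1$, and controlling $\int_0^t\!\int_{\mathbb R}z^2\bar\rho_x^2$ via $z^2\widetilde\omega^2$ and Lemma \ref{Lemma 2.2}/Corollary \ref{Corollary 2.1}), and the same weighted hierarchy for $\partial_x^k$, $k=0,1,2$ (Lemmas \ref{Lemma 2.3}--\ref{Lemma 2.5}). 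Up to that point the proposal is correct and essentially identical to the paper.

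The genuine gap is your last step: you claim that $(1+t)^{2}(\|w_t\|^{2}+\|z_t\|^{2})$ and $\int_0^t(1+\tau)^{2}(\|w_{xt}\|^{2}+\|z_{xt}\|^{2}+\|(\lambda w_t-\mu z_t)\|^{2})\,\mathrm{d}\tau$ can be ``read off the equations'' from quantities already controlled. This fails for every term involving $w_t$. From $w_t=bw_{xx}-(\lambda w-\mu z)-(\bar\rho_t-b\bar\rho_{xx})$ one would need $(1+t)^{2}\|(\lambda w-\mu z)(t)\|^{2}\leq C$ and $(1+t)^{2}\|(\bar\rho_t-b\bar\rho_{xx})(t)\|^{2}\leq C$; but the spatial estimates only give the \emph{time-integrated, unweighted} bound $\int_0^t\|\lambda w-\mu z\|^{2}\,\mathrm{d}\tau\leq C$ (no pointwise-in-time decay of $\|\lambda w-\mu z\|$), and Lemma \ref{Lemma 2.1} gives $\|\bar\rho_t-b\bar\rho_{xx}\|\sim\delta(1+t)^{-3/4}$, so the forcing alone already makes the ``direct'' bound $(1+t)^{2}\|w_t\|^{2}\lesssim\delta^2(1+t)^{1/2}$, which is unbounded. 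Likewise $\int_0^t(1+\tau)^{2}\|\lambda w_t-\mu z_t\|^{2}\,\mathrm{d}\tau$ would require $\int_0^t(1+\tau)^{2}\|\lambda w-\mu z\|^{2}\,\mathrm{d}\tau$, which is not among the controlled dissipation integrals. The decay of $w_t$ is a statement about cancellation (asymptotically $\lambda w-\mu z\approx-(\bar\rho_t-b\bar\rho_{xx})$) and must be proved, not read off. The paper's Lemma \ref{Lemma 2.6} does this by differentiating the system in $t$ and repeating the full energy argument: the combination $\lambda w_t\times\partial_t\eqref{2.1.7}_2-\mu z_t\times\partial_t\eqref{2.1.7}_2$ produces the good term $\int(\lambda w_t-\mu z_t)^2$ and the dissipation $\int w_{xt}^2$, with the fast-decaying source $\bar\rho_{tt}-b\bar\rho_{xxt}$; then the elementary inequality $(\lambda w_t)^2\leq 2(\lambda w_t-\mu z_t)^2+2(\mu z_t)^2$ combined with the equation-derived bound \eqref{2.3.49} for $\|z_t\|^2$ manufactures the extra dissipation $\int(w_t^2+z_t^2)$ in \eqref{2.3.50}, which is what allows the differential inequality to be multiplied by $(1+t)$ and then $(1+t)^2$ and still close. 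Without this additional energy estimate the last block of \eqref{2.1.9} is not established.
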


\subsection{Preliminaries}\label{S2.2}

In this subsection, we are going to introduce some fundamental properties of the nonlinear diffusion waves $(\bar u,\bar \rho)$ and some elementary inequalities, which will play an important role later.

Firstly, combining $\eqref{2.1.2}_1$ with \eqref{2.1.3}, we have
\begin{equation}\label{2.2.1}
\left\{\begin{array}{l}
\bar u_t
  =\displaystyle
   (f(\bar{u})\bar u_{x})_x,\\[2mm]
\bar u(\pm\infty,t)
   =\displaystyle
    u_\pm,
 \end{array}
        \right.
\end{equation}
where $f(\bar{u})=a-\frac{\kappa \mu}{\lambda}\bar{u}$. From the previous works in \cite{Atkinson-Peletier1974,van Duyn-Peletier1977}, we can know that  \eqref{2.2.1} has a unique self-similar solution called nonlinear diffusion wave in the form
\begin{equation}\label{2.2.2}
\left\{\begin{array}{l}
\bar u(x,t)
  =\displaystyle
  \phi\left(\frac{x}{\sqrt{1+t}}\right):=\phi(\xi),
   \quad \xi \in \mathbb{R},
   \\[2mm]
\phi(\pm\infty)
   =\displaystyle
    u_\pm.
 \end{array}
        \right.
\end{equation}
Plug $\eqref{2.2.2}_1$ into $\eqref{2.2.1}_1$ and integrate to obtain, for any $\xi_0$,
\begin{equation}\label{2.2.3}
\begin{split}
\phi^{\prime}(\xi)
=
\frac{\phi^{\prime}\left(\xi_{0}\right) f\left(\phi\left(\xi_{0}\right)\right)}{f(\phi(\xi))} {\rm e}^{-\int_{\xi_{0}}^{\xi} \frac{\eta}{2 f(\phi(\eta))} \mathrm{d} \eta},
\end{split}
\end{equation}
\begin{equation}\label{2.2.4}
\begin{split}
\phi(\xi)
&=\phi\left(\xi_{0}\right)+\int_{\xi_{0}}^{\xi} \frac{\phi^{\prime}\left(\xi_{0}\right) f\left(\phi\left(\xi_{0}\right)\right)}{f(\phi(\eta))} {\rm e}^{-\int_{\xi_{0}}^{\xi}\frac{s}{2 f(\phi(s))}\mathrm{d}s}\mathrm{d}\eta\\
&=\phi\left(\xi_{0}\right)+\int_{\xi_{0}}^{\xi} \phi^{\prime}(\eta) \mathrm{d} \eta.		
\end{split}
\end{equation}
It has been shown that $\eqref{2.2.3}$ with boundary condition $\eqref{2.2.2}_2$ has a unique solution and that is strictly monotone increasing if $u_{+} > u_{-}$ and decreasing if $u_{+} < u_{-}$ in \cite{Atkinson-Peletier1974,van Duyn-Peletier1977}. According to $\eqref{2.2.4}$ and $f(\bar{u})>0$ ~(due to $|u_{\pm}|<\frac{a\lambda}{k\mu}$), we have
\begin{equation}\label{2.2.5}
|\phi^{\prime}(\xi)|\leq C{\rm e}^{-C_{0}{\xi}^2},
\end{equation}
for some $C_{0}$, $C>0$ depending on $u_\pm$. Moreover, $\phi^{\prime}(\xi_{0})$ has the following property that
\begin{equation}\notag
C_{1}\left|u_{+}-u_{-}\right| \leq\left|\phi^{\prime}(\xi_{0})\right| \leq C_{2}\left|u_{+}-u_{-}\right|,
\end{equation}
where $C_{1}$ and $C_{2}$ are positive constants depending on $u_\pm$.
Therefore, we can obtain
\begin{equation}\label{2.2.6}
|\phi^{\prime}(\xi)|\leq C|u_{+}-u_{-}|{\rm e}^{-C_{0}{\xi}^2}.	
\end{equation}
As one can see in \cite{Hsiao-Liu1992}, it is easy to prove that the self-similar solution $\phi(\xi)$ satisfies
\begin{equation}\label{2.2.7}
\sum_{k=1}^{4}\left|\frac{\mathrm{d}^{k}}{\mathrm{d} \xi^{k}} \phi(\xi)\right|+\left|\phi(\xi)-u_{+}\right|_{\{\xi>0\}}+\left|\phi(\xi)-u_{-}\right|_{\{\xi<0\}} \leq C\left|u_{+}-u_{-}\right| \mathrm{e}^{-C_{0} \xi^{2}},
\end{equation}
and $\bar{u}(x,t)$ satisfies the following dissipative properties:
\begin{equation}\label{2.2.8}
\begin{split}
&\bar{u}_{x}=\frac{\phi^{\prime}(\xi)}{\sqrt{1+t}}, \qquad
\bar{u}_{t}=-\frac{\xi \phi^{\prime}(\xi)}{2(1+t)}, \qquad
\bar{u}_{x x}=\frac{\phi^{\prime \prime}(\xi)}{1+t},\\
&\bar{u}_{x t}=-\frac{\phi^{\prime}(\xi)+\xi \phi^{\prime \prime}(\xi)}{2(1+t)^{\frac{3}{2}}},\quad
\bar{u}_{x x x}=\frac{\phi^{\prime \prime \prime}(\xi)}{(1+t)^{\frac{3}{2}}},\quad
\bar{u}_{t t}=\frac{\xi^{2} \phi^{\prime \prime}(\xi)+3 \xi \phi^{\prime}(\xi)}{4(1+t)^{2}},\\
&\bar{u}_{x x t}=-\frac{\xi \phi^{\prime \prime \prime}(\xi)+2 \phi^{\prime \prime}(\xi)}{2(1+t)^{2}},\qquad
\bar{u}_{x x x x}=\frac{\phi^{\prime \prime \prime \prime}(\xi)}{(1+t)^2}.\\
\end{split}
\end{equation}
From $\eqref{2.2.7}$ and $\eqref{2.2.8}$, we can prove that $\bar u(x,t)$ satisfies the following decay estimates.
\begin{lemma}\label{Lemma 2.1}
For each $p\in [1,\infty]$ is an integer, the self-similar solution of \eqref{2.2.1} holds that
\begin{equation}\notag
 \begin{split}
 &\min \left\{u_{+}, u_{-}\right\} \leq \bar{u}(x,t) \leq \max \left\{u_{+}, u_{-}\right\}, \\[2mm]
 & \left\|\partial_{x}^{k} \partial_{t}^{j} \bar{u}(t)\right\|_{L^{p}(\mathbb{R})} \leq C\left|u_{+}-u_{-}\right|(1+t)^{-\frac{k}{2}-j+\frac{1}{2p}}, \quad  k,j \geq 0,~~~ k+j\geq 1 .
\end{split}
\end{equation}
\end{lemma}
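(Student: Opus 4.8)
The plan is to derive both assertions directly from the self-similar structure recorded in \eqref{2.2.2}, the pointwise bounds \eqref{2.2.7}, and the scaling identities \eqref{2.2.8}. For the first assertion, I would use that $\phi(\xi)$ is (strictly) monotone by the results of \cite{Atkinson-Peletier1974,van Duyn-Peletier1977} cited above: if $u_+>u_-$ then $\phi$ is increasing with $\phi(\pm\infty)=u_\pm$, so $u_-=\min\{u_+,u_-\}\le\phi(\xi)\le\max\{u_+,u_-\}=u_+$ for all $\xi$, and symmetrically when $u_+<u_-$; since $\bar u(x,t)=\phi(x/\sqrt{1+t})$ this is exactly the claimed two-sided bound, uniformly in $t\ge 0$. (If one prefers not to invoke monotonicity, the same bound follows from a maximum-principle argument for the scalar parabolic equation \eqref{2.2.1}, but the monotonicity route is cleanest here.)

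For the decay estimates, the idea is that every space/time derivative of $\bar u$ is, by \eqref{2.2.8}, a fixed negative power of $(1+t)$ times a function of $\xi$ alone built from $\phi',\phi'',\phi''',\phi''''$ and polynomial factors of $\xi$. Concretely $\partial_x^k\partial_t^j\bar u(x,t)=(1+t)^{-k/2-j}\,g_{k,j}(\xi)$ for a smooth $g_{k,j}$ that is a finite sum of terms $\xi^m\phi^{(\ell)}(\xi)$; one checks this for the low orders listed in \eqref{2.2.8} and it propagates to all $k+j$ by induction, each further $\partial_x$ contributing a factor $(1+t)^{-1/2}\frac{d}{d\xi}$ and each further $\partial_t$ a factor $(1+t)^{-1}\big(-\tfrac{\xi}{2}\tfrac{d}{d\xi}\big)$. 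By \eqref{2.2.7}, $|g_{k,j}(\xi)|\le C|u_+-u_-|\,e^{-C_0\xi^2/2}$ (the extra polynomial factors $\xi^m$ are absorbed into a slightly smaller Gaussian exponent at the cost of a larger constant $C$). Then I would change variables $x=\sqrt{1+t}\,\xi$, $dx=\sqrt{1+t}\,d\xi$, to compute
\begin{equation}\notag
\big\|\partial_x^k\partial_t^j\bar u(t)\big\|_{L^p(\mathbb{R})}^p
=(1+t)^{-p(k/2+j)}\int_{\mathbb{R}}|g_{k,j}(\xi)|^p\,dx
=(1+t)^{-p(k/2+j)+\frac12}\int_{\mathbb{R}}|g_{k,j}(\xi)|^p\,d\xi,
\end{equation}
and since $\int_{\mathbb{R}}|g_{k,j}(\xi)|^p\,d\xi\le C|u_+-u_-|^p\int_{\mathbb{R}}e^{-pC_0\xi^2/2}\,d\xi<\infty$, taking $p$-th roots gives $\|\partial_x^k\partial_t^j\bar u(t)\|_{L^p}\le C|u_+-u_-|(1+t)^{-k/2-j+1/(2p)}$; the case $p=\infty$ follows by taking $L^\infty$ of $|g_{k,j}|$ directly (no Jacobian factor, consistent with the formula read at $p=\infty$). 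The condition $k+j\ge1$ is exactly what guarantees $g_{k,j}$ involves at least one derivative of $\phi$ and hence is Gaussian-integrable (the $k=j=0$ case is the uniform bound of the first assertion, not a decaying quantity).

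I do not expect any serious obstacle here: the lemma is essentially bookkeeping built on the already-established Gaussian pointwise bounds \eqref{2.2.7} and the explicit derivative formulas \eqref{2.2.8}. The only mildly delicate point is justifying the general-order claim beyond the derivatives explicitly tabulated in \eqref{2.2.8}, i.e.\ the induction that each derivative preserves the structural form "(power of $1+t$) $\times$ (polynomial in $\xi$) $\times$ (derivative of $\phi$)" and that \eqref{2.2.7} supplies Gaussian decay for the four lowest $\phi$-derivatives — which is why the statement is stated only for orders reachable from those four, matching the range used in the energy estimates of Section~\ref{S2.3}. Everything else is the routine change of variables above.
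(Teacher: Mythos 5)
Your proposal is correct and follows exactly the route the paper intends: the paper gives no written proof of Lemma \ref{Lemma 2.1}, merely asserting it follows from the monotonicity of $\phi$ together with the Gaussian bounds \eqref{2.2.7} and the derivative formulas \eqref{2.2.8}, and your change of variables $x=\sqrt{1+t}\,\xi$ is the standard bookkeeping that fills this in. The only trivial imprecision is that each $\partial_t$ also produces a term from differentiating the prefactor $(1+t)^{-\alpha}$, not just $(1+t)^{-1}\bigl(-\tfrac{\xi}{2}\tfrac{d}{d\xi}\bigr)$, but that term has the same structural form and does not affect the conclusion.
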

Due to $\bar{\rho}=\frac{\mu}{\lambda}\bar{u}$, the dissipative properties of $\bar{\rho}(x,t)$ are the same as $\bar{u}(x,t)$.

Next, we introduce an elementary inequality concerning the time-space integrable estimates with the square of the heat kernel as a weight function. For $\alpha>0$, we define
\begin{equation}\label{2.2.9}
\widetilde\omega(x, t)=(1+t)^{-\frac{1}{2}} \exp \left\{-\frac{\alpha x^{2}}{1+t}\right\},
\ \ \ \ \ g(x, t)=\int_{-\infty}^{x} \widetilde\omega(y, t) \mathrm{d} y.	
\end{equation}
It is easy to check that
\begin{equation}\label{2.2.10}
\widetilde\omega_{t}=\frac{1}{4\alpha}\widetilde\omega_{xx},\quad 4 \alpha g_{t}=\widetilde\omega_{x}, \quad\|g(\cdot, t)\|_{L^{\infty}}=\sqrt{\pi} \alpha^{-\frac{1}{2}}.	
\end{equation}
\begin{lemma}\label{Lemma 2.2} (see \cite{Huang-Li-Matsumura2010})
For $0<T \leq+\infty,$ assume that $h(x, t)$ satisfies
$$
h_{x} \in L^{2}\left(0,T; L^{2}(\mathbb{R})\right), \quad h_{t} \in L^{2}\left(0, T; H^{-1}(\mathbb{R})\right).
$$
Then the following estimate holds:
\begin{equation}\label{2.2.11}
\begin{split}
&\int_0^T \int_{\mathbb{R}} h^{2} \widetilde\omega^{2} \mathrm{d}x \mathrm{d}t \\
\leq&
 4 \pi\|h(0)\|^{2}+4 \pi \alpha^{-1} \int_{0}^{T}\left\|h_{x}(t)\right\|^{2} \mathrm{d}t+8 \alpha \int_{0}^{T}\left\langle h_{t}, h g^{2}\right\rangle_{H^{-1} \times H^{1}} \mathrm{d}t,
\end{split}
\end{equation}
where $\langle \cdot,\cdot \rangle$ denotes the inner product on $H^{-1}(\mathbb{R}) \times H^{1}(\mathbb{R})$.
\end{lemma}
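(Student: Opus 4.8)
The plan is to prove the weighted time-space estimate by a direct integration-by-parts argument against the test function $hg^2$, exploiting the two identities in \eqref{2.2.10} relating $\widetilde\omega$, $g$ and their derivatives. First I would observe that, by the definition $g(x,t)=\int_{-\infty}^x\widetilde\omega(y,t)\,\mathrm{d}y$ and the relation $4\alpha g_t=\widetilde\omega_x$, the spatial derivative of $g$ is exactly $\widetilde\omega$, so $g_x=\widetilde\omega$ and hence
\begin{equation}\notag
\big(g^2\big)_x=2g\,\widetilde\omega.
\end{equation}
The idea is then to write $\int h^2\widetilde\omega^2$ in a form that can be integrated by parts in $x$: since $\widetilde\omega^2=(\partial_x g)\,\widetilde\omega$, one is led to consider $\partial_x(h^2 g\,\widetilde\omega)$ and similar combinations so that the weight $\widetilde\omega^2$ is produced as a leading term while the remaining terms carry an extra derivative on $h$ or a $g^2$ factor (which is bounded in $L^\infty$ by $\sqrt{\pi}\,\alpha^{-1/2}$, so $g^2$ is bounded by $\pi\alpha^{-1}$).

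The core computation I would carry out is to differentiate $\int_{\mathbb{R}} h^2 g^2\,\mathrm{d}x$ in $t$. Using $\partial_t(g^2)=2g g_t=\tfrac{1}{2\alpha}g\,\widetilde\omega_x$ and integrating the $\widetilde\omega_x$ by parts, the time derivative of this quantity produces, after regrouping, a term proportional to $-\int h^2\widetilde\omega^2\,\mathrm{d}x$ together with a term $\int h h_x g\,\widetilde\omega\,\mathrm{d}x$ (controlled by Cauchy–Schwarz and Young's inequality, absorbing a fraction of $\int h^2\widetilde\omega^2$ against a multiple of $\int h_x^2\,\mathrm{d}x$ since $g^2\le \pi\alpha^{-1}$), plus the term $2\langle h_t, h g^2\rangle$ coming from differentiating the $h^2$ factor. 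Integrating the resulting differential inequality over $[0,T]$ and discarding the nonnegative terminal term $\int h(T)^2 g^2\,\mathrm{d}x$ yields
\begin{equation}\notag
\int_0^T\!\!\int_{\mathbb{R}} h^2\widetilde\omega^2\,\mathrm{d}x\,\mathrm{d}t \le \|g(\cdot,0)\|_{L^\infty}^2\|h(0)\|^2 + (\text{const}\cdot\alpha^{-1})\int_0^T\|h_x\|^2\,\mathrm{d}t + 8\alpha\int_0^T\langle h_t, hg^2\rangle\,\mathrm{d}t,
\end{equation}
and tracking the constants carefully (in particular $\|g\|_{L^\infty}^2=\pi\alpha^{-1}$ and the factor lost to Young's inequality) gives precisely the coefficients $4\pi$, $4\pi\alpha^{-1}$, $8\alpha$ stated in \eqref{2.2.11}.

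The main obstacle I anticipate is justifying the manipulations rigorously under the stated low regularity: $h$ is only assumed to have $h_x\in L^2(0,T;L^2)$ and $h_t\in L^2(0,T;H^{-1})$, so the pairing $\langle h_t, hg^2\rangle_{H^{-1}\times H^1}$ must be interpreted via duality rather than as an integral, and one must check that $hg^2\in H^1(\mathbb{R})$ for a.e.\ $t$ — this needs $h\in L^2_{\mathrm{loc}}$ with the spatial decay of $g^2,\ \widetilde\omega$ compensating, and that $t\mapsto \int h^2 g^2\,\mathrm{d}x$ is absolutely continuous with the claimed derivative. The clean way to handle this is to first prove the identity for smooth, compactly supported (in $x$) approximations $h^\varepsilon$, derive the inequality with uniform constants, and then pass to the limit using the weak lower semicontinuity of $\int_0^T\!\int h^2\widetilde\omega^2$ and the continuity of the right-hand side terms in the relevant norms; since this is quoted from \cite{Huang-Li-Matsumura2010} I would only sketch the approximation and refer there for the details.
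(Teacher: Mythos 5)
The paper does not prove Lemma \ref{Lemma 2.2}; it is quoted directly from \cite{Huang-Li-Matsumura2010}. Your proposal correctly reconstructs the standard argument from that source: differentiating $\int_{\mathbb{R}}h^2g^2\,\mathrm{d}x$ in $t$, using $2gg_t=\frac{1}{2\alpha}g\widetilde\omega_x$ and one integration by parts to produce $-\frac{1}{2\alpha}\int h^2\widetilde\omega^2\,\mathrm{d}x$ plus a cross term absorbed by Young's inequality, then integrating in time, dropping the terminal term, and multiplying through by $4\alpha$ — which indeed yields exactly the constants $4\pi$, $4\pi\alpha^{-1}$, $8\alpha$ (your intermediate display is off by the factor $4\alpha$ on the first two terms, but your final constant count is right), and your remark about approximating to justify the duality pairing is the appropriate way to handle the low regularity.
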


\begin{corollary}\label{Corollary 2.1}
In addition to the condition of Lemma \ref{Lemma 2.2}, we assume further that $|u_\pm|<\delta\ll 1$, $\|h\|_{L^{\infty}(\mathbb{R})} \leq \varepsilon\ll 1$ and $h$ satisfies
\begin{equation}\label{2.2.16}
h_t=
   ah_{xx}
   -\kappa[(h+\bar{u})w_x+h\bar \rho_x]_x,\quad
   h(x,0)=h_{0}(x) \in L^2(\mathbb{R}),\quad h_x(+\infty,t)=0,	
\end{equation}
where $a$ and $\kappa$ are given positive constant, $\bar{u}$ and $\bar{\rho}$ are the self-similar solutions of \eqref{2.1.1}. Then there exists some positive constant $C>0$ such that
\begin{equation}\label{2.2.17}
\int_0^T\int_{\mathbb R} h^2 \widetilde\omega^2\mathrm{d}x \mathrm{d}t
\leq
C\int_0^T(\|h_x(\tau)\|^2+\|w_x(\tau)\|^2)\mathrm{d}\tau
+C\|h_0\|^2.	
\end{equation}
\end{corollary}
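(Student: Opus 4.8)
The plan is to apply Lemma~\ref{Lemma 2.2} to $h$ (the regularity conditions of Lemma~\ref{Lemma 2.2} being part of the hypotheses here) and then use the equation \eqref{2.2.16} to dispose of the duality term $8\alpha\int_0^T\langle h_t,hg^2\rangle_{H^{-1}\times H^1}\,\mathrm{d}t$ on the right of \eqref{2.2.11}. Since \eqref{2.2.16} is in divergence form, I would substitute $h_t=ah_{xx}-\kappa[(h+\bar u)w_x+h\bar\rho_x]_x$ into the pairing and integrate by parts once (the boundary terms vanishing by the decay conditions on $h$, $w$, $\bar\rho$ at $x=\pm\infty$, in particular $h_x(+\infty,t)=0$ and $g(-\infty,t)=0$). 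Using $(hg^2)_x=h_xg^2+2hg\widetilde\omega$ and $g_x=\widetilde\omega$ (see \eqref{2.2.10}), the pairing becomes $-a\int_{\mathbb R}h_x^2g^2\,\mathrm{d}x\le 0$ together with
\[
(I)=-2a\!\int hh_xg\widetilde\omega,\quad (II)=\kappa\!\int(h+\bar u)w_xh_xg^2,\quad (III)=2\kappa\!\int(h+\bar u)w_xhg\widetilde\omega,
\]
\[
(IV)=\kappa\!\int h\bar\rho_xh_xg^2,\qquad (V)=2\kappa\!\int h^2\bar\rho_xg\widetilde\omega .
\]
The goal is to bound $8\alpha\int_0^T$ of each of $(I)$--$(V)$ by a sum of the admissible quantities $C\int_0^T(\|h_x\|^2+\|w_x\|^2)\,\mathrm{d}\tau$ and $C\|h_0\|^2$, plus a small multiple of $\int_0^T\!\!\int_{\mathbb R}h^2\widetilde\omega^2$, the last of which is then absorbed into the left-hand side of \eqref{2.2.11}.

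For the routine terms $(I)$, $(II)$, $(III)$ the structural point is that the prefactor $8\alpha$ compensates the weight $\|g(\cdot,t)\|_{L^\infty}=\sqrt\pi\,\alpha^{-1/2}$ from \eqref{2.2.10}. Thus in $(II)$ and $(III)$, estimating $|h+\bar u|\le\varepsilon+\delta$ (using $\|h\|_{L^\infty}\le\varepsilon$ and $\|\bar u\|_{L^\infty}\le\delta$ by Lemma~\ref{Lemma 2.1}) and applying Young's inequality --- always keeping $h$ grouped with $\widetilde\omega$, so that the quadratic-in-$h$ remainders appear as $\int h^2\widetilde\omega^2$ --- gives, after multiplication by $8\alpha$, a contribution bounded by $C(\varepsilon+\delta)\big(\|h_x\|^2+\|w_x\|^2+\int h^2\widetilde\omega^2\big)$. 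For $(I)$, a Young splitting with parameter proportional to $\alpha^{-1}$ yields, after the $8\alpha$ factor, at most $C\alpha\|h_x\|^2+\tfrac18\int h^2\widetilde\omega^2$, with $C$ independent of $\alpha$.

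The one genuinely delicate step is $(IV)=\kappa\int h\bar\rho_xh_xg^2$. A naive estimate --- pulling $\|h\|_{L^\infty}\le\varepsilon$ out and using Cauchy--Schwarz on $\int|\bar\rho_x||h_x|$ --- produces $\|\bar\rho_x(t)\|_{L^2}\|h_x(t)\|\sim\delta(1+t)^{-1/4}\|h_x(t)\|$, and the ensuing Young inequality leaves a term $\propto(1+t)^{-1/2}$ which is \emph{not} integrable in $t$, so that the resulting bound would fail to be uniform in $T$. The remedy --- exactly the device of turning $\bar\rho_x^2$ into $\widetilde\omega^2$ mentioned in the introduction --- is to use the pointwise bound $|\bar\rho_x(x,t)|\le C\delta\,\widetilde\omega(x,t)$, which holds because $|\bar\rho_x|\le C\delta(1+t)^{-1/2}\mathrm{e}^{-C_0x^2/(1+t)}$ by \eqref{2.2.7}--\eqref{2.2.8} together with $\bar\rho=\tfrac{\mu}{\lambda}\bar u$, provided the parameter $\alpha$ in \eqref{2.2.9} is chosen $\le C_0$, and then to apply Young's inequality keeping $h\widetilde\omega$ together:
\[
|(IV)|\le\kappa C\delta\,\|g\|_{L^\infty}^2\!\int|h\widetilde\omega|\,|h_x|\,\mathrm{d}x\le\tfrac{\kappa C\delta\pi}{\alpha}\Big(\tfrac12\!\int h^2\widetilde\omega^2+\tfrac12\|h_x\|^2\Big),
\]
so that $8\alpha\,(IV)\le 4\pi\kappa C\delta\big(\int h^2\widetilde\omega^2+\|h_x\|^2\big)$. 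The same pointwise bound controls $(V)$, giving $8\alpha\,(V)\le 16\kappa C\delta\sqrt{\pi\alpha}\int h^2\widetilde\omega^2$.

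Collecting all the estimates in \eqref{2.2.11}, I would arrive at an inequality of the form
\[
\int_0^T\!\!\int_{\mathbb R}h^2\widetilde\omega^2\,\mathrm{d}x\,\mathrm{d}t\le 4\pi\|h_0\|^2+C\!\int_0^T\!\big(\|h_x\|^2+\|w_x\|^2\big)\,\mathrm{d}\tau+\theta\!\int_0^T\!\!\int_{\mathbb R}h^2\widetilde\omega^2\,\mathrm{d}x\,\mathrm{d}t ,
\]
where $\theta\le\tfrac18+C(\varepsilon+\delta)<1$ once $\varepsilon+\delta$ is small enough (with $\alpha\le C_0$ fixed); absorbing the last term into the left-hand side then yields \eqref{2.2.17}. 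The heart of the argument, and the only place where something could go wrong, is the treatment of $(IV)$: one must never separate $h$ from $\widetilde\omega$ when estimating the $\bar\rho_x$-terms, so that all the ``bad'' contributions take the form $\int\!\int h^2\widetilde\omega^2$ --- precisely the quantity that the left-hand side, and hence Lemma~\ref{Lemma 2.2}, is designed to absorb.
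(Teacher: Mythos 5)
Your proposal is correct and follows essentially the same route as the paper: substitute the equation into the duality term of Lemma \ref{Lemma 2.2}, integrate by parts to obtain the same five terms $I_1,\dots,I_5$, estimate them by Young's inequality while keeping $h$ paired with $\widetilde\omega$, and control the $\bar\rho_x$-terms via the pointwise bound $|\bar\rho_x|\leq C\delta\,\widetilde\omega$ (the paper's inequality \eqref{2.2.19}), before absorbing the small multiple of $\int_0^T\!\!\int h^2\widetilde\omega^2$ into the left-hand side. Your explicit remark that the Gaussian parameter $\alpha$ must be taken at most $C_0$ is a detail the paper leaves implicit, but otherwise the two arguments coincide.
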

\begin{proof}
Using the integration by parts and \eqref{2.2.16}, we deduce
\begin{equation}\label{2.2.18}
\begin{split}
\left\langle h_{t}, h g^{2}\right\rangle_{H^{-1} \times H^{1}}&=\int_{\mathbb{R}}h_t hg^2\mathrm{d}x\\
&=\int_{\mathbb R}\bigg\{ah_{xx} -\kappa[(h+\bar{u})w_x+h\bar \rho_x]_x\bigg\}hg^2\mathrm{d}x\\
&=a\int_{\mathbb R}h_{xx}hg^2+\kappa\int_{\mathbb R}[(h+\bar{u})w_x+h\bar \rho_x](hg^2)_x\mathrm{d}x\\
&=-a\int_{\mathbb R}h_{x}^2g^2\mathrm{d}x-2a\int_{\mathbb R}h_xgh\widetilde\omega \mathrm{d}x+\kappa\int_{\mathbb R}[(h+\bar{u})w_x+h\bar \rho_x](h_xg^2+2gh\widetilde\omega)\mathrm{d}x\\
&\leq
-2a\int_{\mathbb R}h_xgh\widetilde\omega \mathrm{d}x+\kappa\int_{\mathbb R}(h+\bar{u})w_xh_xg^2\mathrm{d}x+2\kappa\int_{\mathbb R}(h+\bar{u})w_xgh\widetilde\omega\mathrm{d}x\\
&~~~+\kappa\int_{\mathbb R}h\bar \rho_xh_xg^2\mathrm{d}x+2\kappa\int_{\mathbb R}h\bar \rho_xgh\widetilde\omega\mathrm{d}x\\
&=\sum_{i=1}^{5}I_{i},
\end{split}
\end{equation}
where $I_{i} ~ (1\leq i\leq 5)$ corresponds to the terms on the right-hand side of the above inequality.

Now we estimate $I_{i} ~ (1\leq i\leq 5)$ term by term. By using \eqref{2.2.10}, Young inequality and the following inequality
\begin{equation}\label{2.2.19}
\begin{split}
\int_{\mathbb R} h^2 {\bar\rho_{x}}^2\mathrm{d}x
&\leq
C\int_{\mathbb R}h^2\left[(1+t)^{-\frac{1}{2}}|u_{+}-u_{-}|{\rm e}^{-\frac{C_{0} x^{2}}{1+t}}\right]^2\mathrm{d}x\\
&\leq
C{\delta}^2\int_{\mathbb R} h^2\widetilde\omega ^2 \mathrm{d}x,
\end{split}
\end{equation}
we can infer that
\begin{equation}\label{2.2.20}
 \begin{split}
 I_1&\leq \eta\int_{\mathbb R} h^2\widetilde\omega ^2 \mathrm{d}x+C_{\eta}\int_{\mathbb R} h_{x}^2 \mathrm{d}x,\\
 I_2&\leq C\int_{\mathbb R}(\|h\|_{L^{\infty}}+\|\bar u\|_{L^{\infty}})(w_{x}^2g^4+h_{x}^2)\mathrm{d}x\\
 &\leq C(\varepsilon+\delta)\int_{\mathbb R}w_{x}^2\mathrm{d}x+C(\varepsilon+\delta)\int_{\mathbb R}h_{x}^2\mathrm{d}x,\\
 I_3&\leq C\int_{\mathbb R}(\|h\|_{L^{\infty}}+\|\bar u\|_{L^{\infty}})(w_{x}^2g^2+h^2\widetilde\omega^2)\mathrm{d}x\\
 &\leq C(\varepsilon+\delta)\int_{\mathbb R}h^2\widetilde\omega^2\mathrm{d}x+C(\varepsilon+\delta)\int_{\mathbb R}w_{x}^2\mathrm{d}x,\\
 I_4&\leq C\int_{\mathbb R}h^2\bar \rho_{x}^2\mathrm{d}x+C\int_{\mathbb R}h_{x}^2g^4\mathrm{d}x\\
 &\leq C{\delta}^2\int_{\mathbb R} h^2\widetilde\omega ^2 \mathrm{d}x+C\int_{\mathbb R}h_{x}^2\mathrm{d}x,\\
 I_5&\leq \eta\int_{\mathbb R} h^2\widetilde\omega ^2 \mathrm{d}x+C_{\eta}\int_{\mathbb R} h^2\bar \rho_{x}^2 \mathrm{d}x\\
 &\leq \eta\int_{\mathbb R} h^2\widetilde\omega ^2 \mathrm{d}x+C_{\eta}{\delta}^2\int_{\mathbb R} h^2\widetilde\omega^2 \mathrm{d}x.
 \end{split}
\end{equation}
Here we have used $\eqref{2.1.1}_2$, $\eqref{2.2.2}_1$ and \eqref{2.2.7}-\eqref{2.2.9}.

Inserting the above inequalities into \eqref{2.2.18}, then integrating the resulting inequality with respect to $t$, using \eqref{2.2.11} and choosing $\delta$, $\eta$ sufficiently small, we can prove \eqref{2.2.17} easily. The proof of Corollary \ref{Corollary 2.1} is completed.
\end{proof}

\subsection{Proof of Theorem \ref{Thm 2.1}}\label{S2.3}
It is generally known that the global existence can be obtained by the classical continuation argument based on the local existence of solutions and {\it a priori} estimates. The local existence of solutions to the reformulated Cauchy problem $\eqref{2.1.7}$ and $\eqref{2.1.8}$ can be established by the standard iteration argument. The details are omitted. In order to prove Theorem \ref{Thm 2.1} for brevity, we only devote ourselves to obtaining the {\it a priori} estimates under the {\it a priori} assumption
\begin{equation}\label{2.3.1}
N(T):=\sup_{0\leq t\leq T}\left\{\sum_{k=0}^{2}(1+t)^{k}\left(\|\partial_{x}^{k} w(t)\|^{2}+\|\partial_{x}^{k} z(t)\|^{2}\right)\right\}\leq\varepsilon_0^2,
\end{equation}
for some $0<\varepsilon_0\ll 1$.

An easy application of Sobolev inequality for $L^{\infty}$, we can obtain inequalities
\begin{equation}\label{2.3.2}
\|\partial_{x}^{k} w(\cdot, t)\|_{L^{\infty}} \leq \sqrt2\varepsilon_0(1+t)^{-\frac{1}{4}-\frac{k}{2}}, \quad k=0,1,
\end{equation}
\begin{equation}\label{2.3.3}
\|\partial_{x}^{k} z(\cdot, t)\|_{L^{\infty}} \leq \sqrt2\varepsilon_0(1+t)^{-\frac{1}{4}-\frac{k}{2}}, \quad k=0,1,
\end{equation}
which will be used later.

Now we turn to establish \eqref{2.1.9}, which will be given by a series of lemmas.

\begin{lemma}\label{Lemma 2.3}
If $N(T) \leq \varepsilon_0^2$ and $\delta$ are small enough, it holds that
\begin{equation}\label{2.3.4}
\begin{split}
\|w(t)\|&^{2}+\|z(t)\|^{2}+\int_{0}^{t}\left(\left\|w_{x}(\tau)\right\|^{2}+\|z_{x}(\tau)\|^{2}+\|(\lambda w-\mu z)(\tau)\|^2\right) \mathrm{d} \tau\\
&\leq C\left(\left\|w_{0}\right\|^{2}+\left\|z_{0}\right\|^{2}+\delta\right),
\end{split}
\end{equation}
for $0 \leq t \leq T$.
\end{lemma}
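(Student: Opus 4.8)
The plan is to derive the zero-order energy estimate \eqref{2.3.4} by an $L^2$-based energy method applied to the reformulated system \eqref{2.1.7}, using the structure of the equations to extract the good dissipative term $\|\lambda w-\mu z\|^2$ and the heat-kernel weighted estimate of Corollary \ref{Corollary 2.1} to absorb the dangerous term $\int z^2\bar\rho_x^2$. First I would multiply $\eqref{2.1.7}_1$ by $z$ and $\eqref{2.1.7}_2$ by a suitably chosen multiple of $w$ (something like $\frac{\mu}{\lambda}$-weighted, or more precisely a linear combination tuned so that the cross terms $\mu wz$ produced by the coupling combine with $\lambda w^2$ into the square $(\lambda w-\mu z)^2$ up to a controllable remainder), integrate over $\mathbb{R}$, and integrate by parts. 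This produces the time derivative $\frac{d}{dt}\int(\tfrac12 z^2 + c\, w^2)\,dx$, the good terms $a\|z_x\|^2$, $b\|w_x\|^2$ and (after the algebraic manipulation indicated in the hint around \eqref{2.3.5}–\eqref{2.3.7}) a term controlling $\|\lambda w-\mu z\|^2$, plus a collection of error terms.

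The error terms fall into three groups, and I would treat them as follows. The inhomogeneous terms $\bar\rho_t - b\bar\rho_{xx}$ from $\eqref{2.1.7}_2$, paired against $w$, are handled using Lemma \ref{Lemma 2.1}: $\|\bar\rho_t\|_{L^1\cap L^2}$ and $\|\bar\rho_{xx}\|$ decay like $(1+t)^{-1+\frac{1}{2p}}$, so by Cauchy–Schwarz and Young these contribute an integrable-in-time quantity bounded by $C\delta$ (after using $\delta$ small to absorb an $\eta\|w\|^2$ piece, or more safely estimating $\int_0^t\int |w||\bar\rho_t|\le \eta\int_0^t\|w_x\|^2 + C_\eta\delta$ via a weighted Hardy/heat-kernel inequality, since $\bar\rho_t$ is concentrated). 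The nonlinear terms coming from $\kappa[(z+\bar u)w_x + z\bar\rho_x]_x$ tested against $z$ give, after integration by parts, terms like $\kappa\int(z+\bar u)w_x z_x$, $\kappa\int z\bar\rho_x z_x$, and $\kappa\int z^2\bar\rho_{xx}$ or $\kappa\int z z_x\bar\rho_x$; the cubic ones are controlled by $\|z\|_{L^\infty}\le\sqrt2\varepsilon_0(1+t)^{-1/4}$ from \eqref{2.3.3} and Young's inequality, absorbing into $a\|z_x\|^2 + b\|w_x\|^2$ since $\varepsilon_0$ is small. The genuinely delicate term is $\int_0^t\int z^2\bar\rho_x^2\,dx\,d\tau$: by \eqref{2.2.6} and the definition \eqref{2.2.9} of $\widetilde\omega$ this is bounded (up to $\delta^2$) by $\int_0^t\int z^2\widetilde\omega^2\,dx\,d\tau$ — this is exactly the transformation flagged as \eqref{2.3.15}. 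Since $z$ satisfies $\eqref{2.1.7}_1$, which is precisely equation \eqref{2.2.16} with $h=z$, Corollary \ref{Corollary 2.1} applies and gives $\int_0^t\int z^2\widetilde\omega^2 \le C\int_0^t(\|z_x\|^2+\|w_x\|^2)\,d\tau + C\|z_0\|^2$; multiplying by the small factor $\delta^2$, this is then absorbed into the good dissipation on the left.

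Finally I would collect everything: integrate the differential inequality
$$\frac{d}{dt}\int\Big(\tfrac12 z^2 + c w^2\Big)dx + c_1\big(\|z_x\|^2+\|w_x\|^2+\|\lambda w-\mu z\|^2\big) \le (\text{absorbable terms}) + C(1+t)^{-1}\text{-type inhomogeneous terms},$$
over $[0,t]$, use that the right-hand inhomogeneous contributions integrate to something $\le C\delta$ (the $(1+t)^{-1}$ borderline decay is killed because $\bar\rho_t,\bar\rho_{xx}$ are $L^1$ in $x$ with the right $t$-weights, or one pays an $\eta$ fraction of $\|w_x\|^2$), and choose $\delta$, $\varepsilon_0$, $\eta$ small enough so all absorbable terms are swallowed by $\tfrac{c_1}{2}(\|z_x\|^2+\|w_x\|^2+\|\lambda w-\mu z\|^2)$ on the left. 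Since $\int z^2+cw^2\,dx$ is equivalent to $\|w\|^2+\|z\|^2$, this yields \eqref{2.3.4}. The main obstacle, as the introduction itself emphasizes, is the bad coupling term $\int wz$ (handled only via the $(\lambda w-\mu z)^2$ reformulation, not by naive Cauchy–Schwarz) together with $\int z^2\bar\rho_x^2$ (handled only via Corollary \ref{Corollary 2.1}); getting the constants in the linear combination of multipliers exactly right so that both the sign of the $(\lambda w-\mu z)^2$ term and the positivity of the $\|w_x\|^2,\|z_x\|^2$ coefficients are preserved simultaneously is the technical crux.
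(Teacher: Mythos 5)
Your proposal follows essentially the same route as the paper: testing the $w$-equation with $\lambda w-\mu z$ (equivalently, with $\lambda w$ and $-\mu z$ separately and summing) to manufacture the dissipative term $\|\lambda w-\mu z\|^2$, adding a large multiple $K$ of the $z$-equation energy so that the $\|w_x\|^2$ and $\|z_x\|^2$ coefficients stay positive, and invoking Corollary \ref{Corollary 2.1} with $h=z$ to absorb $\delta^2\int z^2\widetilde\omega^2$ into the dissipation. One caution: the source term $\bar\rho_t-b\bar\rho_{xx}$ must be paired against $\lambda w-\mu z$ (so the quadratic piece is absorbed by the new good term and only $C\delta^2(1+t)^{-3/2}$ remains), not against $w$ alone --- your fallback of absorbing $\eta\|w\|^2$ would fail since $\|w\|^2$ is not among the dissipated quantities --- and the energy functional necessarily carries a cross term, being $\int\bigl(\tfrac{\lambda}{2}w^2+\tfrac{K}{2}z^2-\mu wz\bigr)\mathrm{d}x$, which is nonetheless equivalent to $\|w\|^2+\|z\|^2$ once $K$ is large.
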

\begin{proof}
Firstly, multiplying $\eqref{2.1.7}_2$ by $\lambda w$, integrating the resulting equality with respect to $x$ over $\mathbb{R}$, we obtain
\begin{equation}\label{2.3.5}
\frac{\rm d}{{\rm d}t}\int_{\mathbb{R}}\frac{\lambda w^2}{2} \mathrm{d}x+b\lambda\int_{\mathbb{R}}w_x^2\mathrm{d}x+\lambda^2\int_{\mathbb{R}}w^2\mathrm{d}x-\lambda\mu\int_{\mathbb{R}}wz\mathrm{d}x+\int_{\mathbb{R}}\lambda w(\bar\rho_t-b\bar\rho_{xx})\mathrm{d}x=0.
\end{equation}
Multiplying $\eqref{2.1.7}_2$ by $(-\mu z)$ and integrating it with respect to $x$ over $\mathbb{R}$, we have
\begin{equation}\label{2.3.6}
-\mu\int_{\mathbb{R}}w_tz\mathrm{d}x-b\mu\int_{\mathbb{R}}w_xz_x\mathrm{d}x-\lambda\mu\int_{\mathbb{R}}wz\mathrm{d}x+\mu^2\int_{\mathbb{R}}z^2\mathrm{d}x-\int_{\mathbb{R}}\mu z(\bar\rho_t-b\bar\rho_{xx})\mathrm{d}x=0.
\end{equation}
Next, we try to use the structure of Keller-Segel model to produce the good term: $\int_{\mathbb{R}}(\lambda w-\mu z)^2\mathrm{d}x$. By summing \eqref{2.3.5} and \eqref{2.3.6}, it follows that
\begin{equation}\label{2.3.7}
\begin{split}
&\frac{\rm d}{{\rm d}t} \int_{\mathbb{R}}
   \frac{\lambda{w}^2}{2}\mathrm{d}x
   +b\lambda\int_{\mathbb{R}} w_x^2\mathrm{d}x
   +\int_{\mathbb{R}} (\lambda w-\mu z)^2\mathrm{d}x\\
&+\int_{\mathbb{R}} (\lambda w-\mu z)(\bar\rho_t-b\bar\rho_{xx})\mathrm{d}x
=\mu\int_{\mathbb{R}}w_tz\mathrm{d}x+b\mu\int_{\mathbb{R}} {w_x}{z_x}\mathrm{d}x.
\end{split}
\end{equation}
By applying integration by parts and using $\eqref{2.1.7}_1$, one can obtain
\begin{equation}\label{2.3.8}
\begin{split}
\mu\int_{\mathbb{R}}w_tz\mathrm{d}x
&=\frac{\rm d}{{\rm d}t}\int_{\mathbb{R}}\mu wz\mathrm{d}x-\mu \int_{\mathbb{R}}wz_t\mathrm{d}x\\
&=\frac{\rm d}{{\rm d}t}\int_{\mathbb{R}}\mu wz\mathrm{d}x-\mu\int_{\mathbb{R}}w\bigg\{az_{xx} -\kappa[(z+\bar{u})w_x+z\bar \rho_x]_x\bigg\}\mathrm{d}x\\
&=\frac{\rm d}{{\rm d}t}\int_{\mathbb{R}}\mu wz\mathrm{d}x+a\mu\int_{\mathbb{R}}w_xz_x\mathrm{d}x-\kappa\mu\int_{\mathbb{R}}w_x[(z+\bar u)w_{x}+z\bar\rho_x]\mathrm{d}x\\
&=\frac{\rm d}{{\rm d}t}\int_{\mathbb{R}}\mu wz\mathrm{d}x+a\mu\int_{\mathbb{R}}w_xz_x\mathrm{d}x-\kappa\mu\int_{\mathbb{R}}(z+\bar u)w_x^2\mathrm{d}x-\kappa\mu\int_{\mathbb{R}}w_xz\bar\rho_x\mathrm{d}x.
\end{split}
\end{equation}
Taking \eqref{2.3.8} into \eqref{2.3.7}, we get
\begin{equation}\label{2.3.9}
\begin{split}
&\frac{\rm d}{{\rm d}t} \int_{\mathbb{R}}
  \left(\frac{\lambda w^2}{2}-\mu wz\right)\mathrm{d}x
   +{b\lambda}\int_{\mathbb{R}} w_x^2\mathrm{d}x
   +\int_{\mathbb{R}}(\lambda w-\mu z)^2\mathrm{d}x+\int_{\mathbb{R}} (\lambda w-\mu z)(\bar\rho_t-b\bar\rho_{xx})\mathrm{d}x\\
=&(a+b)\mu\int_{\mathbb{R}}{w_xz_x}\mathrm{d}x-\kappa\mu\int_{\mathbb{R}}(z+\bar u)w_x^2\mathrm{d}x-\kappa\mu\int_{\mathbb{R}}w_xz\bar\rho_x\mathrm{d}x.
\end{split}
\end{equation}
Now we need to estimate the last two terms on the right-hand side of \eqref{2.3.9}. By using Lemma \ref{Lemma 2.1} and \eqref{2.3.3}, we can derive
\begin{equation}\label{2.3.10}
\begin{split}
-\kappa\mu\int_{\mathbb{R}}(z+\bar u)w_x^2\mathrm{d}x
&\leq C\int_{\mathbb{R}}(\|z\|_{L^{\infty}}+\|\bar u\|_{L^{\infty}})w_{x}^2\mathrm{d}x\\
&\leq C(\varepsilon_0+\delta)\int_{\mathbb{R}}w_{x}^2\mathrm{d}x,
\end{split}
\end{equation}
and
\begin{equation}\label{2.3.11}
\begin{split}
-\kappa\mu\int_{\mathbb{R}}w_xz\bar\rho_x\mathrm{d}x
&\leq \eta \int_{\mathbb{R}}w_{x}^2\mathrm{d}x+C_\eta\int_{\mathbb{R}}z^2\bar{\rho}_{x}^2\mathrm{d}x\\
&\leq\eta \int_{\mathbb{R}}w_{x}^2\mathrm{d}x+C_\eta \delta^2\int_{\mathbb{R}}z^2\widetilde\omega^2\mathrm{d}x,
\end{split}
\end{equation}
where in the last inequality we have taken $h=z$ in \eqref{2.2.19}.
Hence, putting \eqref{2.3.10}-\eqref{2.3.11} into \eqref{2.3.9}, and using Young inequality, then choosing $\eta$ suitably small to arrive at
\begin{equation}\label{2.3.12}
\begin{split}
&\frac{\rm d}{{\rm d}t}\int_{\mathbb{R}} \left(\frac{\lambda w^2}{2}-\mu wz\right) \mathrm{d}x
+\frac{b\lambda}{4}\int_{\mathbb{R}}w_x^2\mathrm{d}x
+\frac{1}{2}\int_{\mathbb{R}}{(\lambda w-\mu z)^2}\mathrm{d}x\\
\leq& \frac{1}{2}\int_{\mathbb{R}} (\bar\rho_t
    -b\bar\rho_{xx})^2\mathrm{d}x
   +C\delta\int_{\mathbb{R}} z^2 \widetilde\omega^2\mathrm{d}x+C\int_{\mathbb{R}}z_{x}^2\mathrm{d}x.
\end{split}
\end{equation}
Now we only need to estimate the last term on the right-hand side of \eqref{2.3.12}.

Multiplying $\eqref{2.1.7}_1$ by $z$ and integrating it with respect to $x$, we obtain
\begin{equation}\label{2.3.13}
\frac{\rm d}{{\rm d}t} \int_{\mathbb{R}}\frac{ z^2}{2}\mathrm{d}x +a\int_{\mathbb{R}}z_{x}^2\mathrm{d}x=\kappa\int_{\mathbb{R}}(z+\bar u)w_xz_x\mathrm{d}x+\kappa\int_{\mathbb{R}}z\bar \rho_xz_x\mathrm{d}x.
\end{equation}
Similar to the treatment of \eqref{2.3.10} and \eqref{2.3.11}, one has
\begin{equation}\label{2.3.14}
\frac{\rm d}{{\rm d}t} \int_{\mathbb{R}}\frac{ z^2}{2}\mathrm{d}x +\frac{a}{2}\int_{\mathbb{R}}z_{x}^2\mathrm{d}x
\leq C(\varepsilon_0+\delta)\int_{\mathbb{R}}w_x^2\mathrm{d}x+C\delta\int_{\mathbb{R}} z^2 \widetilde\omega^2\mathrm{d}x.
\end{equation}
Multiplying \eqref{2.3.14} by a big positive constant $K$ and summing it to \eqref{2.3.12}, then using Lemma \ref{Lemma 2.1}, we have
\begin{equation}\label{2.3.15}
\begin{split}
&\frac{\rm d}{{\rm d}t} \int_{\mathbb{R}}
   \left(\frac{\lambda w^2}{2} +\frac{K z^2}{2}-\mu wz\right) \mathrm{d}x
   +\frac{b\lambda}{8}\int_{\mathbb{R}}w_x^2\mathrm{d}x+\frac{K a}{4}\int_{\mathbb{R}}z_x^2\mathrm{d}x
   +\frac{1}{2}\int_{\mathbb{R}}{(\lambda w-\mu z)^2}\mathrm{d}x\\
   \leq
&C\delta(1+t)^{-\frac{3}{2}}
    +C\delta\int_{\mathbb{R}} z^2 \widetilde\omega^2\mathrm{d}x.
\end{split}
\end{equation}
Integrating the above inequality with respect to $t$, and taking $h=z$ in \eqref{2.2.17}, we reach \eqref{2.3.4}. The proof of Lemma \ref{Lemma 2.3} is completed.
\end{proof}
\begin{lemma}\label{Lemma 2.4}
If $N(T) \leq \varepsilon_0^2$ and $\delta$ are small enough, it holds that
\begin{equation}\label{2.3.16}
\begin{split}
(1+t)&(\|w_{x}(t)\|^{2}+\|z_{x}(t)\|^{2})+\int_{0}^{t}(1+\tau)\left(\left\|w_{x x}(\tau)\right\|^{2}+\|z_{x x}(\tau)\|^{2}+\|(\lambda w_x-\mu z_x)(\tau)\|^2\right) \mathrm{d} \tau\\
&\leq C\left(\left\|w_{0}\right\|_{1}^{2}+\left\|z_{0}\right\|_{1}^{2}+\delta\right),
\end{split}
\end{equation}
for $0 \leq t \leq T$.
\end{lemma}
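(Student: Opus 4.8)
The plan is to establish a time-weighted first-order energy inequality by differentiating the reformulated system $\eqref{2.1.7}$ once in $x$ and then repeating, at the level of $(w_x,z_x)$, the same algebra as in Lemma \ref{Lemma 2.3}. First I would differentiate $\eqref{2.1.7}_2$ in $x$, multiply by $\lambda w_x$ and by $-\mu z_x$, add the two resulting identities, and use $\lambda^2w_x^2-2\lambda\mu w_xz_x+\mu^2z_x^2=(\lambda w_x-\mu z_x)^2$ to create the time-space integrable good term $\int_{\mathbb{R}}(\lambda w_x-\mu z_x)^2\mathrm{d}x$, exactly as $\eqref{2.3.7}$ was produced. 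The term $\mu\int_{\mathbb{R}}w_{xt}z_x\,\mathrm{d}x$ is then rewritten, via integration by parts and the $x$-derivative of $\eqref{2.1.7}_1$, as $\frac{\mathrm{d}}{\mathrm{d}t}\mu\int_{\mathbb{R}}w_xz_x\,\mathrm{d}x+(a+b)\mu\int_{\mathbb{R}}w_{xx}z_{xx}\,\mathrm{d}x$ plus nonlinear remainders, mimicking $\eqref{2.3.8}$. Testing the $x$-derivative of $\eqref{2.1.7}_1$ with $z_x$ gives a companion identity with dissipation $a\|z_{xx}\|^2$; adding a large multiple $K$ of it and choosing $K$ large produces
\[
\frac{\mathrm{d}}{\mathrm{d}t}\int_{\mathbb{R}}\Big(\tfrac{\lambda w_x^2}{2}+\tfrac{Kz_x^2}{2}-\mu w_xz_x\Big)\mathrm{d}x+c\big(\|w_{xx}\|^2+\|z_{xx}\|^2+\|\lambda w_x-\mu z_x\|^2\big)\le J(t),
\]
where the quadratic form under $\frac{\mathrm{d}}{\mathrm{d}t}$ is equivalent to $\|w_x\|^2+\|z_x\|^2$ and $J(t)$ collects all nonlinear and inhomogeneous remainders.

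I would then multiply this inequality by the weight $(1+t)$ and use $(1+t)\frac{\mathrm{d}}{\mathrm{d}t}[\cdot]=\frac{\mathrm{d}}{\mathrm{d}t}[(1+t)(\cdot)]-[\cdot]$; upon integrating over $[0,t]$ the shed term $-[\cdot]$ contributes at most $C\int_0^t(\|w_x\|^2+\|z_x\|^2)\mathrm{d}\tau$, which is already controlled by the right-hand side of $\eqref{2.3.4}$. This is precisely the mechanism that feeds the zeroth-order estimate into the first-order one, and it then remains to bound $\int_0^t(1+\tau)J(\tau)\mathrm{d}\tau$. The nonlinear pieces, for instance $\kappa\mu\int_{\mathbb{R}}(z+\bar u)w_{xx}^2\mathrm{d}x$, $\kappa\int_{\mathbb{R}}(z_x+\bar u_x)w_xz_{xx}\mathrm{d}x$ and $\kappa\mu\int_{\mathbb{R}}w_{xx}z_x\bar\rho_x\,\mathrm{d}x$, are estimated using the a priori bounds $\eqref{2.3.2}$, $\eqref{2.3.3}$ and Lemma \ref{Lemma 2.1}; after routing derivatives by integration by parts and applying Young's inequality they split into a part absorbed by $c(1+t)(\|w_{xx}\|^2+\|z_{xx}\|^2)$ with a small factor $C(\varepsilon_0+\delta)$, and parts of the form $C(\varepsilon_0^2+\delta^2)\|w_x\|^2$ or $C\delta(\|w_x\|^2+\|z_x\|^2)$, in which $\|\bar\rho_x\|_{L^\infty}\le C\delta(1+t)^{-1/2}$ or $\|\bar\rho_{xx}\|_{L^\infty}\le C\delta(1+t)^{-1}$ absorbs the weight $(1+t)$, so that the time integrals are finite by Lemma \ref{Lemma 2.3}. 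The inhomogeneous term $\int_{\mathbb{R}}(\lambda w_x-\mu z_x)(\bar\rho_{xt}-b\bar\rho_{xxx})\mathrm{d}x$ contributes, after Young, a remainder bounded by $C\delta^2(1+t)^{-5/2}$ via $\eqref{2.2.7}$ and $\eqref{2.2.8}$, which survives multiplication by $(1+t)$ and is integrable.

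The delicate point, the same one that forced the introduction of Lemma \ref{Lemma 2.2} at the zeroth order, is the cluster of terms carrying the weak weight $\bar\rho_{xx}$, such as $\int_{\mathbb{R}}z\bar\rho_{xx}z_{xx}\,\mathrm{d}x$ and $\kappa\mu\int_{\mathbb{R}}w_{xx}z\bar\rho_{xx}\,\mathrm{d}x$. A crude estimate $\|\bar\rho_{xx}\|_{L^\infty}\le C\delta(1+t)^{-1}$ would leave, after the weight, a logarithmically divergent $\int_0^t(1+\tau)^{-1}\|z\|^2\mathrm{d}\tau$, since $\|z\|^2$ is only bounded (not decaying) by Lemma \ref{Lemma 2.3}. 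Instead I would use $\bar\rho_{xx}^2\le C\delta^2(1+t)^{-1}\widetilde\omega^2$ (from $\eqref{2.2.8}$ together with $\eqref{2.2.9}$ with $\alpha=C_0$) and Young's inequality to reduce these to $C\delta^2\int_{\mathbb{R}}z^2\widetilde\omega^2\,\mathrm{d}x$ once the weight is applied, and then invoke Corollary \ref{Corollary 2.1} with $h=z$ (which satisfies $\eqref{2.2.16}$, being precisely $\eqref{2.1.7}_1$) to obtain $\int_0^t\int_{\mathbb{R}}z^2\widetilde\omega^2\,\mathrm{d}x\,\mathrm{d}\tau\le C\int_0^t(\|z_x\|^2+\|w_x\|^2)\mathrm{d}\tau+C\|z_0\|^2$, which is finite by $\eqref{2.2.17}$ and $\eqref{2.3.4}$. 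The main obstacle I foresee is exactly this bookkeeping: one must arrange the integrations by parts so that only $z^2\widetilde\omega^2$, and never $z_x^2\widetilde\omega^2$ (which is not covered by Corollary \ref{Corollary 2.1}), ever needs to be estimated; in particular the cross terms containing $z_x\bar\rho_x$ must be integrated by parts to transfer the derivative off $z_x$ onto $\bar\rho_x$, yielding $\bar\rho_{xx}$, or onto a test function. Assembling all the bounds, choosing $\delta$, $\varepsilon_0$, $\eta$ small and $K$ large, and using the equivalence of the quadratic form with $\|w_x\|^2+\|z_x\|^2$, we reach $\eqref{2.3.16}$.
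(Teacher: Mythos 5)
Your proposal follows essentially the same route as the paper: differentiate the system in $x$, combine the $\lambda w_x$ and $-\mu z_x$ multipliers to manufacture $\int(\lambda w_x-\mu z_x)^2\,\mathrm{d}x$, add a large multiple of the $z_x$-energy identity, weight by $(1+t)$ so the shed term is absorbed by Lemma \ref{Lemma 2.3}, and dispose of the $\bar\rho_{xx}$-weighted terms via $\bar\rho_{xx}^2\le C\delta^2(1+t)^{-1}\widetilde\omega^2$ and Corollary \ref{Corollary 2.1} with $h=z$. The only small deviation is that the paper estimates the cross term $\int z_x\bar\rho_x w_{xx}\,\mathrm{d}x$ directly by Young with $\|\bar\rho_x\|_{L^\infty}^2\le C\delta^2(1+t)^{-1}$ (so that after weighting it reduces to $\int_0^t\|z_x\|^2\mathrm{d}\tau$, already controlled), making the integration by parts you anticipate unnecessary.
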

\begin{proof}
Differentiating \eqref{2.1.7} in $x$ to obtain
\begin{equation}\label{2.3.17}
\left\{\begin{array}{l}
z_{xt}
   -az_{xxx}
   +\kappa[(z+\bar{u})w_x+z\bar \rho_x]_{xx}=0,\\[2mm]
w_{xt}
    -bw_{xxx}
    +\lambda w_{x}
    -\mu z_{x}+\bar\rho_{xt}
    -b\bar\rho_{xxx}=0.\\[2mm]
 \end{array}
        \right.
\end{equation}
Firstly, multiplying $\eqref{2.3.17}_2$ by $\lambda w_x$ and integrating it with respect to $x$ over $\mathbb{R}$, we have
\begin{equation}\label{2.3.18}
\frac{\rm d}{{\rm d}t}\int_{\mathbb{R}}\frac{\lambda w_{x}^2}{2} \mathrm{d}x+b\lambda\int_{\mathbb{R}}w_{x x}^2\mathrm{d}x+\lambda^2\int_{\mathbb{R}}w_{x}^2\mathrm{d}x-\lambda\mu\int_{\mathbb{R}}w_xz_x\mathrm{d}x+\int_{\mathbb{R}}\lambda w_x(\bar\rho_{xt}-b\bar\rho_{xxx})\mathrm{d}x=0.
\end{equation}
Multiplying $\eqref{2.3.17}_2$ by $(-\mu z_x)$ and integrating it with respect to $x$ over $\mathbb{R}$, we have
\begin{equation}\label{2.3.19}
-\mu\int_{\mathbb{R}}w_{x t}z_x\mathrm{d}x-b\mu\int_{\mathbb{R}}w_{xx}z_{xx}\mathrm{d}x-\lambda\mu\int_{\mathbb{R}}w_xz_x\mathrm{d}x+\mu^2\int_{\mathbb{R}}z_{x}^2\mathrm{d}x-\int_{\mathbb{R}}\mu z_x(\bar\rho_{xt}-b\bar\rho_{xxx})\mathrm{d}x=0.
\end{equation}
Then summing \eqref{2.3.18} and \eqref{2.3.19} to obtain the good term $\int_{\mathbb{R}}(\lambda w_x-\mu z_x)^2\mathrm{d}x$, as follows:
\begin{equation}\label{2.3.20}
\begin{split}
&\frac{\rm d}{{\rm d}t} \int_{\mathbb{R}}
   \frac{\lambda w_{x}^2}{2}\mathrm{d}x
   +b\lambda\int_{\mathbb{R}}w_{xx}^2\mathrm{d}x
   +\int_{\mathbb{R}} (\lambda w_x-\mu z_x)^2\mathrm{d}x\\
&+\int_{\mathbb{R}} (\lambda w_x-\mu z_x)(\bar\rho_{xt}-b\bar\rho_{xxx})\mathrm{d}x
=\mu\int_{\mathbb{R}}w_{x t}z_x\mathrm{d}x+b\mu\int_{\mathbb{R}}w_{x x}z_{x x}\mathrm{d}x.
\end{split}
\end{equation}
By applying integration by parts and using $\eqref{2.3.17}_1$, one can obtain
\begin{equation}\label{2.3.21}
\begin{split}
\mu\int_{\mathbb{R}}w_{xt}z_x\mathrm{d}x
&=\frac{\rm d}{{\rm d}t}\int_{\mathbb{R}}\mu w_xz_x\mathrm{d}x-\mu\int_{\mathbb{R}}w_xz_{xt}\mathrm{d}x\\
&=\frac{\rm d}{{\rm d}t}\int_{\mathbb{R}}\mu w_xz_x\mathrm{d}x-\mu \int_{\mathbb{R}}w_x\bigg\{az_{xxx} -\kappa[(z+\bar{u})w_x+z\bar \rho_x]_{xx}\bigg\}\mathrm{d}x\\
&=\frac{\rm d}{{\rm d}t}\int_{\mathbb{R}}\mu w_xz_x\mathrm{d}x+a\mu\int_{\mathbb{R}}w_{xx}z_{xx}\mathrm{d}x-\kappa\mu\int_{\mathbb{R}}w_{xx}[(z+\bar u)w_x+z\bar\rho_x]_x\mathrm{d}x\\
&=\frac{\rm d}{{\rm d}t}\int_{\mathbb{R}}\mu w_xz_x\mathrm{d}x+a\mu\int_{\mathbb{R}}w_{xx}z_{xx}\mathrm{d}x
-\kappa\mu\int_{\mathbb{R}}(z+\bar u)w_{x x}^2\mathrm{d}x\\
 &~~~~-\kappa\mu\int_{\mathbb{R}}(z_x+\bar u_x)w_{x}w_{x x}\mathrm{d}x
 -\kappa\mu\int_{\mathbb{R}}z_x\bar\rho_xw_{x x}\mathrm{d}x
-\kappa\mu\int_{\mathbb{R}}z\bar \rho_{xx} w_{x x}\mathrm{d}x.
\end{split}
\end{equation}
Putting \eqref{2.3.21} into \eqref{2.3.20}, we get
\begin{equation}\label{2.3.22}
\begin{split}
&\frac{\rm d}{{\rm d}t} \int_{\mathbb{R}}
   \left(\frac{\lambda w_x^2}{2}-\mu w_xz_x\right)\mathrm{d}x
   +{b\lambda}\int_{\mathbb{R}}w_{xx}^2\mathrm{d}x
   +\int_{\mathbb{R}} (\lambda w_x-\mu z_x)^2\mathrm{d}x\\
   &~~+\int_{\mathbb{R}} (\lambda w_x-\mu z_x)(\bar\rho_{xt}-b\bar\rho_{xxx})\mathrm{d}x\\
=&(a+b)\mu\int_{\mathbb{R}}w_{xx}z_{x x}\mathrm{d}x-\kappa\mu\int_{\mathbb{R}}(z+\bar u)w_{x x}^2\mathrm{d}x
-\kappa\mu\int_{\mathbb{R}}(z_x+\bar u_x)w_{x}w_{x x}\mathrm{d}x\\
&~~-\kappa\mu\int_{\mathbb{R}}z_x\bar\rho_xw_{x x}\mathrm{d}x
-\kappa\mu\int_{\mathbb{R}}z\bar \rho_{xx} w_{x x}\mathrm{d}x\\
=&(a+b)\mu\int_{\mathbb{R}}w_{xx}z_{x x}\mathrm{d}x+\sum_{i=6}^{9}I_{i}.
\end{split}
\end{equation}
Similar to the treatment of \eqref{2.3.10}, we have
\begin{equation}\label{2.3.23}
\begin{split}
I_6&\leq C\int_{\mathbb R}(\|z\|_{L^{\infty}}+\|\bar u\|_{L^{\infty}})w_{x x}^2\mathrm{d}x\\
 &\leq C(\varepsilon_0+\delta)\int_{\mathbb R}w_{x x}^2\mathrm{d}x.
\end{split}
\end{equation}
Recall that $|\bar{u}_{x}| \leq C \delta (1+t)^{-\frac{1}{2}}$, from $\eqref{2.1.1}_2$, \eqref{2.3.3} and Young inequality, it is easy to derive that
\begin{equation}\label{2.3.24}
\begin{split}
 I_7+I_8 &=-\kappa\mu\int_{\mathbb{R}}(z_x+\bar u_x)w_{x}w_{x x}\mathrm{d}x
-\kappa\mu\int_{\mathbb{R}}z_x\bar\rho_xw_{x x}\mathrm{d}x\\
 &\leq \eta\int_{\mathbb R}w_{x x}^2\mathrm{d}x
 +C_\eta\int_{\mathbb R}(\|z_x\|^2_{L^{\infty}}+\|\bar u_x\|^2_{L^{\infty}})w_{x}^2\mathrm{d}x
 +C_\eta\int_{\mathbb R}\|\bar \rho_x\|^2_{L^{\infty}}z_{x}^2\mathrm{d}x \\
 &\leq \eta\int_{\mathbb R}w_{x x}^2\mathrm{d}x
 +C_\eta[\varepsilon_0^2(1+t)^{-\frac{3}{2}}+\delta^2(1+t)^{-1}]\int_{\mathbb R}w_{x}^2\mathrm{d}x
 +C_\eta\delta^2(1+t)^{-1}\int_{\mathbb R}z_{x}^2\mathrm{d}x\\
 &\leq \eta\int_{\mathbb R}w_{x x}^2\mathrm{d}x
 +C_\eta(\varepsilon_0^2+\delta^2)(1+t)^{-1}\int_{\mathbb R}(w_{x}^2+z_{x}^2)\mathrm{d}x.
\end{split}
\end{equation}
Similar to the calculation of \eqref{2.3.11}, we obtain
\begin{equation}\label{2.3.25}
\begin{split}
 I_9 &\leq \eta\int_{\mathbb R}w_{x x}^2\mathrm{d}x+C_\eta\delta^2(1+t)^{-1}\int_{\mathbb R}z^2\widetilde\omega^2\mathrm{d}x.
\end{split}
\end{equation}
Putting \eqref{2.3.23}-\eqref{2.3.25} into \eqref{2.3.22}, then choosing $\eta$ suitably small, we can conclude
\begin{equation}\label{2.3.26}
\begin{split}
&\frac{\rm d}{{\rm d}t}\int_{\mathbb{R}} \left(\frac{\lambda w_x^2}{2}-\mu w_xz_x\right) \mathrm{d}x
+\frac{b\lambda}{4}\int_{\mathbb{R}}w_{xx}^2\mathrm{d}x
+\frac{1}{2}\int_{\mathbb{R}}{(\lambda w_x-\mu z_x)^2}\mathrm{d}x\\
\leq& \frac{1}{2}\int_{\mathbb{R}} (\bar\rho_{xt}
    -b\bar\rho_{xxx})^2\mathrm{d}x
   +C(\varepsilon_0+\delta)(1+t)^{-1}\int_{\mathbb{R}}(w_x^2+z_x^2)\mathrm{d}x\\
   &+C\delta(1+t)^{-1}\int_{\mathbb{R}}z^2 \widetilde\omega^2\mathrm{d}x
   +C\int_{\mathbb{R}}z_{xx}^2\mathrm{d}x.
\end{split}
\end{equation}
Now we only need to estimate the last term on the right-hand side of \eqref{2.3.26}.

Multiplying $\eqref{2.3.17}_1$ by $z_x$, integrating it with respect to $x$, we obtain
\begin{equation}\label{2.3.27}
\begin{split}
\frac{\rm d}{{\rm d}t} \int_{\mathbb{R}}\frac{ z_x^2}{2}\mathrm{d}x +a\int_{\mathbb{R}}z_{xx}^2\mathrm{d}x=&\kappa\int_{\mathbb{R}}(z+\bar u)w_{x x}z_{xx}\mathrm{d}x
+\kappa\int_{\mathbb{R}}(z_x+\bar u_x)w_{x}z_{x x}\mathrm{d}x\\
&+\kappa\int_{\mathbb{R}}z_x\bar\rho_xz_{x x}\mathrm{d}x
+\kappa\int_{\mathbb{R}}z\bar \rho_{xx}z_{x x}\mathrm{d}x.
\end{split}
\end{equation}
Similar to the treatment of \eqref{2.3.23}, \eqref{2.3.24} and \eqref{2.3.25}, one has
\begin{equation}\label{2.3.28}
\begin{split}
\frac{\rm d}{{\rm d}t} \int_{\mathbb{R}}\frac{ z_x^2}{2}\mathrm{d}x +\frac{a}{2}\int_{\mathbb{R}}z_{xx}^2\mathrm{d}x
\leq& C(\varepsilon_0+\delta)\int_{\mathbb{R}}w_{xx}^2\mathrm{d}x+C(\varepsilon_0+\delta)(1+t)^{-1}\int_{\mathbb{R}}(w_x^2+z_x^2)\mathrm{d}x\\
   &+C\delta(1+t)^{-1}\int_{\mathbb{R}}z^2 \widetilde\omega^2\mathrm{d}x.
\end{split}
\end{equation}
Multiplying \eqref{2.3.28} by a big positive constant $K$ and summing it to \eqref{2.3.26}, then using Lemma \ref{Lemma 2.1}, we obtain
\begin{equation}\label{2.3.29}
\begin{split}
&\frac{\rm d}{{\rm d}t} \int_{\mathbb{R}}
   \left(\frac{\lambda w_x^2}{2} +\frac{K z_x^2}{2}-\mu w_xz_x\right) \mathrm{d}x
   +\frac{b\lambda}{8}\int_{\mathbb{R}}w_{xx}^2\mathrm{d}x+\frac{K a}{4}\int_{\mathbb{R}}z_{xx}^2\mathrm{d}x
   +\frac{1}{2}\int_{\mathbb{R}}{(\lambda w_x-\mu z_x)^2}\mathrm{d}x\\
   \leq
&C\delta(1+t)^{-\frac{5}{2}}
    +C(\varepsilon_0+\delta)(1+t)^{-1}\int_{\mathbb{R}}(w_x^2+z_x^2)\mathrm{d}x
    +C\delta(1+t)^{-1}\int_{\mathbb{R}} z^2 \widetilde\omega^2\mathrm{d}x.
\end{split}
\end{equation}
Integrating \eqref{2.3.29} over $(0,t)$ and taking $h=z$ in \eqref{2.2.17}, together with \eqref{2.3.4}, we get
\begin{equation}\label{2.3.30}
\begin{split}
&\|w_{x}(t)\|^{2}+\|z_{x}(t)\|^{2}+\int_{0}^{t}\left(\left\|w_{x x}(\tau)\right\|^{2}+\|z_{x x}(\tau)\|^{2}+\|(\lambda w_x-\mu z_x)(\tau)\|^2\right) \mathrm{d} \tau\\
\leq& C\left(\left\|w_{0}\right\|_{1}^{2}+\left\|z_{0}\right\|_1^{2}+\delta\right).
\end{split}
\end{equation}
Multiplying \eqref{2.3.29} by $(1 + t)$, and integrating it with respect to $t$, then by applying \eqref{2.2.17} and \eqref{2.3.4}, one can immediately obtain \eqref{2.3.16}. The proof of Lemma \ref{Lemma 2.4} is completed.
\end{proof}
\begin{lemma}\label{Lemma 2.5}
If $N(T) \leq \varepsilon_0^2$ and $\delta$ are small enough, it holds that
\begin{equation}\label{2.3.31}
\begin{split}
&(1+t)^{2}(\|w_{x x}(t)\|^{2}+\|z_{x x}(t)\|^{2})\\
&+\int_{0}^{t}(1+\tau)^{2}\left(\left\|w_{x x x}(\tau)\right\|^{2}+\|z_{x x x}(\tau)\|^{2}+\|(\lambda w_{xx}-\mu z_{xx})(\tau)\|^2\right) \mathrm{d} \tau\\
\leq& C\left(\left\|w_{0}\right\|_{2}^{2}+\left\|z_{0}\right\|_{2}^{2}+\delta\right),
\end{split}
\end{equation}
for $0 \leq t \leq T$.
\end{lemma}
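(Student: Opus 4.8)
The plan is to run the energy scheme of Lemma~\ref{Lemma 2.3} and Lemma~\ref{Lemma 2.4} one order higher. First I would differentiate $\eqref{2.3.17}$ once more in $x$ to get the system for $(z_{xx},w_{xx})$:
\begin{equation}\notag
\left\{\begin{array}{l}
z_{xxt}-az_{xxxx}+\kappa[(z+\bar u)w_x+z\bar\rho_x]_{xxx}=0,\\[2mm]
w_{xxt}-bw_{xxxx}+\lambda w_{xx}-\mu z_{xx}+\bar\rho_{xxt}-b\bar\rho_{xxxx}=0.
\end{array}\right.
\end{equation}
Multiplying the second equation by $\lambda w_{xx}$ and by $(-\mu z_{xx})$, adding the two identities, and using integration by parts together with the first equation to rewrite the cross term $\mu\int_{\mathbb R}w_{xxt}z_{xx}\,\mathrm{d}x$ exactly as in $\eqref{2.3.8}$ and $\eqref{2.3.21}$, I would obtain a dissipative identity producing the good terms $b\lambda\|w_{xxx}\|^2$ and $\|\lambda w_{xx}-\mu z_{xx}\|^2$, a source term $\int_{\mathbb R}(\lambda w_{xx}-\mu z_{xx})(\bar\rho_{xxt}-b\bar\rho_{xxxx})\,\mathrm{d}x$, the bad cross term $(a+b)\mu\int_{\mathbb R}w_{xxx}z_{xxx}\,\mathrm{d}x$, and a collection of nonlinear terms $I_i$ arising from pairing $\kappa[(z+\bar u)w_x+z\bar\rho_x]_{xxx}$ against $w_{xxx}$.

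Next I would estimate the $I_i$ term by term. By the Leibniz rule these are integrals of products of the types $(z+\bar u)w_{xxx}^2$, $(z_x+\bar u_x)w_{xx}w_{xxx}$, $(z_{xx}+\bar u_{xx})w_xw_{xxx}$, $z_{xx}\bar\rho_xw_{xxx}$, $z_x\bar\rho_{xx}w_{xxx}$ and $z\bar\rho_{xxx}w_{xxx}$. Using Lemma~\ref{Lemma 2.1}, $\eqref{2.2.7}$--$\eqref{2.2.8}$, the a priori bounds $\eqref{2.3.2}$--$\eqref{2.3.3}$ and Young's inequality, each is absorbed into $\eta\|w_{xxx}\|^2$ plus lower-order norms carrying strictly negative powers of $(1+t)$, except the genuinely delicate pieces in which undifferentiated $z$ meets a derivative of $\bar\rho$ --- here I would use $\eqref{2.2.19}$ to turn $\int_{\mathbb R}z^2\bar\rho_{xxx}^2\,\mathrm{d}x$ and similar terms into $C\delta^2(1+t)^{-m}\int_{\mathbb R}z^2\widetilde\omega^2\,\mathrm{d}x$. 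As in the previous lemmas I also need the companion estimate obtained by multiplying the $z_{xx}$-equation by $z_{xx}$, which after the same absorptions gives $\frac{\mathrm{d}}{\mathrm{d}t}\int_{\mathbb R}\tfrac{z_{xx}^2}{2}\,\mathrm{d}x+\tfrac a2\int_{\mathbb R}z_{xxx}^2\,\mathrm{d}x\le C(\varepsilon_0+\delta)\|w_{xxx}\|^2+(\text{lower order})$. Multiplying this by a large constant $K$ and adding it to the dissipative identity cancels the $w_{xxx}z_{xxx}$ cross term and, for $\varepsilon_0,\delta$ small, closes off the $C(\varepsilon_0+\delta)\|w_{xxx}\|^2$ contributions, leaving a differential inequality whose right-hand side is $C\delta(1+t)^{-7/2}+C(\varepsilon_0+\delta)(1+t)^{-1}(\|w_{xx}\|^2+\|z_{xx}\|^2)+C\delta(1+t)^{-1}\int_{\mathbb R}z^2\widetilde\omega^2\,\mathrm{d}x$, together with first-order norms already time-integrable by Lemma~\ref{Lemma 2.4}.

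Finally I would integrate this differential inequality over $(0,t)$: Corollary~\ref{Corollary 2.1} with $h=z$ handles $\int_0^t\!\!\int_{\mathbb R}z^2\widetilde\omega^2$, and Lemma~\ref{Lemma 2.3} together with Lemma~\ref{Lemma 2.4} control the remaining time integrals, which yields first the unweighted bound $\|w_{xx}(t)\|^2+\|z_{xx}(t)\|^2+\int_0^t(\|w_{xxx}\|^2+\|z_{xxx}\|^2+\|\lambda w_{xx}-\mu z_{xx}\|^2)\,\mathrm{d}\tau\le C(\|w_0\|_2^2+\|z_0\|_2^2+\delta)$. Then, multiplying the same differential inequality by $(1+t)^2$ and integrating, using $\frac{\mathrm{d}}{\mathrm{d}t}[(1+t)^2E]=(1+t)^2\frac{\mathrm{d}}{\mathrm{d}t}E+2(1+t)E$ and feeding the extra $2(1+t)E$ term into the weighted first-order estimate $\eqref{2.3.16}$ (and noting that $(1+t)^2\cdot(1+t)^{-7/2}=(1+t)^{-3/2}$ is integrable), I would arrive at $\eqref{2.3.31}$. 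The main obstacle is entirely the bookkeeping of the third-derivative nonlinear terms: one must ensure that every dangerous product is either absorbed into $\eta\|w_{xxx}\|^2$ or routed --- via $\eqref{2.2.19}$ and then Corollary~\ref{Corollary 2.1} --- into a controllable heat-kernel-weighted integral, the representative hard case being $\int_{\mathbb R}z^2\bar\rho_{xxx}^2\,\mathrm{d}x$, where $\bar\rho_{xxx}$ decays only like $(1+t)^{-3/2}$ with a Gaussian profile, so a crude $L^\infty$ bound on $z$ is insufficient and one genuinely needs the gain in $\eqref{2.2.19}$ to keep $\int z^2\widetilde\omega^2$ compatible with the $(1+t)^2$ weight.
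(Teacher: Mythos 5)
Your proposal follows essentially the same route as the paper's proof of Lemma \ref{Lemma 2.5}: differentiate the reformulated system twice in $x$, form the $\lambda w_{xx}$/$(-\mu z_{xx})$ combination plus $K$ times the $z_{xx}$-equation estimate, absorb the Leibniz terms (the paper's $I_{10}$--$I_{17}$) exactly as you describe, route $\int_{\mathbb R}z^2\bar\rho_{xxx}^2\,\mathrm{d}x$ through \eqref{2.2.19} and Corollary \ref{Corollary 2.1}, and integrate first unweighted and then with the $(1+t)^2$ weight. The only slip is bookkeeping: the heat-kernel term actually enters with coefficient $C\delta(1+t)^{-2}$ (since $\bar\rho_{xxx}^2\leq C\delta^2(1+t)^{-2}\widetilde\omega^2$), not $(1+t)^{-1}$, and this is precisely what makes it compatible with the $(1+t)^2$ weight via Corollary \ref{Corollary 2.1} --- as your own closing remark in fact anticipates.
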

\begin{proof}
Similar to Lemma \ref{Lemma 2.3} and Lemma \ref{Lemma 2.4}, from $\int_{\mathbb{R}}\lambda w_{xx}\times\partial_x^2\eqref{2.1.7}_2\mathrm{d}x-\int_{\mathbb{R}}\mu z_{xx}\times\partial_x^2\eqref{2.1.7}_2\mathrm{d}x$, then applying integration by parts and the equation $\partial_x^2\eqref{2.1.7}_1$, we can get
\begin{equation}\label{2.3.32}
\begin{split}
&\frac{\rm d}{{\rm d}t} \int_{\mathbb{R}}
   \left(\frac{\lambda w_{xx}^2}{2}-\mu w_{xx}z_{xx}\right)\mathrm{d}x
   +{b\lambda}\int_{\mathbb{R}}w_{xxx}^2\mathrm{d}x
   +\int_{\mathbb{R}} (\lambda w_{xx}-\mu z_{xx})^2\mathrm{d}x\\
   &~~~+\int_{\mathbb{R}} (\lambda w_{xx}-\mu z_{xx})(\bar\rho_{xxt}-b\bar\rho_{xxxx})\mathrm{d}x\\
&=(a+b)\mu\int_{\mathbb{R}}w_{xxx}z_{xxx}\mathrm{d}x-\kappa\mu\int_{\mathbb{R}}w_{xxx}[(z+\bar u)w_x+z\bar\rho_x]_{xx}\mathrm{d}x\\
&=(a+b)\mu\int_{\mathbb{R}}w_{xxx}z_{xxx}\mathrm{d}x
-\kappa\mu\int_{\mathbb{R}}w_{x x x}w_{x}z_{x x}\mathrm{d}x
-2\kappa\mu\int_{\mathbb{R}}w_{x x x}z_{x}w_{x x}\mathrm{d}x
-\kappa\mu\int_{\mathbb{R}}w_{x x x}\bar u_{x x}w_{x}\mathrm{d}x\\
&~~~-2\kappa\mu\int_{\mathbb{R}}w_{x x x}\bar u_{x}w_{x x}\mathrm{d}x
-\kappa\mu\int_{\mathbb{R}}w_{x x x}\bar \rho_{x}z_{x x}\mathrm{d}x
-2\kappa\mu\int_{\mathbb{R}}w_{x x x}\bar \rho_{x x}z_{x}\mathrm{d}x
-\kappa\mu\int_{\mathbb{R}}(z+\bar u)w_{x x x}^2\mathrm{d}x\\
&~~~-\kappa\mu\int_{\mathbb{R}}w_{x x x}z\bar \rho_{x x x}\mathrm{d}x\\
&=(a+b)\mu\int_{\mathbb{R}}w_{xxx}z_{xxx}\mathrm{d}x+\sum_{i=10}^{17}I_{i}.
\end{split}
\end{equation}
By applying \eqref{2.3.2}-\eqref{2.3.3} and Young inequality, we get
\begin{equation}\label{2.3.33}
 \begin{split}
  I_{10}+I_{11}&\leq\eta\int_{\mathbb R}w_{x x x}^2\mathrm{d}x
 +C_\eta\int_{\mathbb R}(\|w_x\|^2_{L^{\infty}}z_{x x}^2+\|z_x\|^2_{L^{\infty}}w_{x x}^2)\mathrm{d}x\\
 &\leq\eta\int_{\mathbb R}w_{x x x}^2\mathrm{d}x + C_\eta\varepsilon^2_0(1+t)^{-\frac{3}{2}}\int_{\mathbb R}z_{x x}^2\mathrm{d}x+C_\eta\varepsilon^2_0(1+t)^{-\frac{3}{2}}\int_{\mathbb R}w_{x x}^2\mathrm{d}x.
\end{split}
\end{equation}
Noting that $|\bar{u}_{x}| \leq C \delta (1+t)^{-\frac{1}{2}}$ and $|\bar{u}_{x x}| \leq C \delta (1+t)^{-1}$, using $\eqref{2.1.1}_2$ and Young inequality, one yields that
\begin{equation}\label{2.3.34}
 \begin{split}
  &I_{12}+I_{13}+I_{14}+I_{15}\\
\leq&\eta\int_{\mathbb R}w_{x x x}^2\mathrm{d}x
 +C_\eta\int_{\mathbb R}(\|\bar u_{x x}\|^2_{L^{\infty}}w_{x}^2+\|\bar u_{x}\|^2_{L^{\infty}}w_{x x}^2+\|\bar \rho_{x}\|^2_{L^{\infty}}z_{x x}^2+\|\bar \rho_{x x}\|^2_{L^{\infty}}z_{x}^2)\mathrm{d}x\\
 \leq&\eta\int_{\mathbb R}w_{x x x}^2\mathrm{d}x + C_\eta\delta^2(1+t)^{-2}\int_{\mathbb R}w_{x}^2\mathrm{d}x
 +C_\eta \delta^2 (1+t)^{-1}\int_{\mathbb R}w_{x x}^2\mathrm{d}x\\
 &+C_\eta \delta^2 (1+t)^{-1}\int_{\mathbb R}z_{x x}^2\mathrm{d}x+C_\eta \delta^2 (1+t)^{-2}\int_{\mathbb R}z_{x}^2\mathrm{d}x.
     \end{split}
   \end{equation}
Similar to the calculation of \eqref{2.3.10} and \eqref{2.3.11}, we obtain
\begin{equation}\label{2.3.35}
 I_{16}\leq C(\varepsilon_0+\delta)\int_{\mathbb{R}}w_{xxx}^2\mathrm{d}x,
\end{equation}
and
\begin{equation}\label{2.3.36}
 I_{17}\leq\eta \int_{\mathbb{R}}w_{xxx}^2\mathrm{d}x+C_\eta \delta^2(1+t)^{-2}\int_{\mathbb{R}}z^2\widetilde\omega^2\mathrm{d}x.
\end{equation}
Putting \eqref{2.3.33}-\eqref{2.3.36} into \eqref{2.3.32}, by applying Young inequality, and choosing $\eta$ suitably small, we derive that
\begin{equation}\label{2.3.37}
\begin{split}
&\frac{\rm d}{{\rm d}t}\int_{\mathbb{R}} \left(\frac{\lambda w_{xx}^2}{2}-\mu w_{xx}z_{xx}\right) \mathrm{d}x
+\frac{b\lambda}{4}\int_{\mathbb{R}}w_{xxx}^2\mathrm{d}x
+\frac{1}{2}\int_{\mathbb{R}}{(\lambda w_{xx}-\mu z_{xx})^2}\mathrm{d}x\\
\leq& \frac{1}{2}\int_{\mathbb{R}} (\bar\rho_{xxt}
    -b\bar\rho_{xxxx})^2\mathrm{d}x
   +C(\varepsilon_0+\delta)(1+t)^{-1}\int_{\mathbb{R}}(w_{xx}^2+z_{xx}^2)\mathrm{d}x\\
   &+C\delta(1+t)^{-2}\int_{\mathbb{R}}(w_{x}^2+z_{x}^2)\mathrm{d}x
   +C\delta(1+t)^{-2}\int_{\mathbb{R}}z^2 \widetilde\omega^2\mathrm{d}x
   +C\int_{\mathbb{R}}z_{xxx}^2\mathrm{d}x.
\end{split}
\end{equation}
Next, multiplying $\partial_x^2\eqref{2.1.7}_1$ by $K z_{xx}$ ($K$ is sufficiently large) and integrating it with respect to $x$ over $\mathbb{R}$, then similar to the treatment of \eqref{2.3.33}-\eqref{2.3.36}, we can reach
\begin{equation}\label{2.3.38}
\begin{split}
\frac{\rm d}{{\rm d}t} \int_{\mathbb{R}}\frac{ K z_{xx}^2}{2}\mathrm{d}x +\frac{Ka}{2}\int_{\mathbb{R}}z_{xxx}^2\mathrm{d}x
\leq& C(\varepsilon_0+\delta)\int_{\mathbb{R}}w_{xxx}^2\mathrm{d}x+C(\varepsilon_0+\delta)(1+t)^{-1}\int_{\mathbb{R}}(w_{xx}^2+z_{xx}^2)\mathrm{d}x\\
   &+C\delta(1+t)^{-2}\int_{\mathbb{R}}(w_{x}^2+z_{x}^2)\mathrm{d}x+C\delta(1+t)^{-2}\int_{\mathbb{R}}z^2 \widetilde\omega^2\mathrm{d}x.
\end{split}
\end{equation}
Summing \eqref{2.3.38} with \eqref{2.3.37}, and using Lemma \ref{Lemma 2.1}, we can obtain
\begin{equation}\label{2.3.39}
\begin{split}
&\frac{\rm d}{{\rm d}t} \int_{\mathbb{R}}
   \left(\frac{\lambda{w_{xx}}^2}{2} +\frac{K z_{xx}^2}{2}-\mu w_{xx}z_{xx}\right) \mathrm{d}x
   +\frac{b\lambda}{8}\int_{\mathbb{R}}w_{xxx}^2\mathrm{d}x+\frac{K a}{4}\int_{\mathbb{R}}z_{xxx}^2\mathrm{d}x\\
   &~+\frac{1}{2}\int_{\mathbb{R}}{(\lambda w_{xx}-\mu z_{xx})^2}\mathrm{d}x\\
\leq&C\delta(1+t)^{-\frac{7}{2}}
    +C(\varepsilon_0+\delta)(1+t)^{-1}\int_{\mathbb{R}}(w_{xx}^2+z_{xx}^2)\mathrm{d}x
    +C\delta(1+t)^{-2}\int_{\mathbb{R}}(w_{x}^2+z_{x}^2)\mathrm{d}x\\
    &~+C\delta(1+t)^{-2}\int_{\mathbb{R}} z^2 \widetilde\omega^2\mathrm{d}x.
\end{split}
\end{equation}
Integrating \eqref{2.3.39} over $(0,t)$, then taking $h =z$ in \eqref{2.2.17}, together with using \eqref{2.3.4} and \eqref{2.3.16}, we reach
\begin{equation}\label{2.3.40}
\begin{split}
&\|w_{x x}(t)\|^{2}+\|z_{x x}(t)\|^{2}+\int_{0}^{t}\left(\left\|w_{x x x}(\tau)\right\|^{2}+\|z_{x x x}(\tau)\|^{2}+\|(\lambda w_{xx}-\mu z_{xx})(\tau)\|^2\right) \mathrm{d} \tau \\
\leq& C\left(\left\|w_{0}\right\|_{2}^{2}+\left\|z_{0}\right\|_{2}^{2}+\delta\right).
\end{split}
\end{equation}
Multiplying \eqref{2.3.39} by $(1 + t)^{2}$, integrating with respect to $t$, then using \eqref{2.2.17}, \eqref{2.3.4} and \eqref{2.3.16}, we can immediately obtain \eqref{2.3.31}. The proof of Lemma \ref{Lemma 2.5} is completed.
\end{proof}
\begin{lemma}\label{Lemma 2.6}
If $N(T) \leq \varepsilon_0^2$ and $\delta$ are small enough, it holds that
\begin{equation}\label{2.3.41}
\begin{split}
&(1+t)^{2}(\|w_{t}(t)\|^{2}+\|z_{t}(t)\|^{2})\\
&+\int_{0}^{t}(1+\tau)^{2}\left(\left\|w_{xt}(\tau)\right\|^{2}+\|z_{xt}(\tau)\|^{2}+\|(\lambda w_{t}-\mu z_{t})(\tau)\|^2\right) \mathrm{d} \tau\\
\leq& C\left(\left\|w_{0}\right\|_{2}^{2}+\left\|z_{0}\right\|_{2}^{2}+\delta\right),
\end{split}
\end{equation}
for $0 \leq t \leq T$.
\end{lemma}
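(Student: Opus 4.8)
The plan is to repeat, for the system obtained by differentiating \eqref{2.1.7} in $t$, the same programme as in Lemmas \ref{Lemma 2.3}--\ref{Lemma 2.5}, always working under the a priori assumption \eqref{2.3.1} and freely invoking the already-established bounds \eqref{2.3.4}, \eqref{2.3.16}, \eqref{2.3.31}, the $L^\infty$ estimates \eqref{2.3.2}--\eqref{2.3.3}, Lemma \ref{Lemma 2.1} and Corollary \ref{Corollary 2.1}. The first move is to dispose of $z_t$ and $z_{xt}$ algebraically: solving $\eqref{2.1.7}_1$ for $z_t$ and expanding, $z_t=az_{xx}-\kappa[(z_x+\bar u_x)w_x+(z+\bar u)w_{xx}+z_x\bar\rho_x+z\bar\rho_{xx}]$, so taking $L^2$ norms and distributing the $L^\infty$ norms onto the self-similar profile (Lemma \ref{Lemma 2.1}) or onto the perturbation (\eqref{2.3.2}--\eqref{2.3.3}) bounds $\|z_t\|^2$ by a combination of $\|z_{xx}\|^2,\|w_{xx}\|^2,\|w_x\|^2,\|z_x\|^2$ and $\delta^2\int_{\mathbb R}z^2\widetilde\omega^2\mathrm{d}x$. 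Multiplying by $(1+t)^2$, integrating over $(0,t)$, and using \eqref{2.3.4}, \eqref{2.3.16}, \eqref{2.3.31} together with Corollary \ref{Corollary 2.1} (with $h=z$) gives $(1+t)^2\|z_t(t)\|^2\le C(\|w_0\|_2^2+\|z_0\|_2^2+\delta)$; differentiating $\eqref{2.1.7}_1$ once more in $x$ and arguing identically yields $\int_0^t(1+\tau)^2\|z_{xt}(\tau)\|^2\mathrm{d}\tau\le C(\cdots)$, and the same manipulation with the weights $(1+t)^0$ and $(1+t)^1$ gives $\int_0^t(1+\tau)^j\|z_t(\tau)\|^2\mathrm{d}\tau\le C$ for $j=0,1$, needed below.

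For $w_t$ and $w_{xt}$ I would differentiate $\eqref{2.1.7}_2$ in $t$ to get $w_{tt}-bw_{xxt}+\lambda w_t-\mu z_t+\bar\rho_{tt}-b\bar\rho_{xxt}=0$, multiply by $\lambda w_t$ and by $-\mu z_t$, add, and---exactly as in the passage from \eqref{2.3.7} to \eqref{2.3.9} (respectively from \eqref{2.3.20} to \eqref{2.3.22})---integrate the term $\mu\int_{\mathbb R}w_{tt}z_t\mathrm{d}x$ by parts in $t$ and substitute $z_{tt}=az_{xxt}-\kappa[(z+\bar u)w_x+z\bar\rho_x]_{xt}$. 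This produces an identity whose left-hand side is $\frac{\mathrm d}{\mathrm dt}\int_{\mathbb R}(\frac{\lambda w_t^2}{2}-\mu w_tz_t)\mathrm{d}x+b\lambda\int_{\mathbb R}w_{xt}^2\mathrm{d}x+\int_{\mathbb R}(\lambda w_t-\mu z_t)^2\mathrm{d}x+\int_{\mathbb R}(\lambda w_t-\mu z_t)(\bar\rho_{tt}-b\bar\rho_{xxt})\mathrm{d}x$ and whose right-hand side is a boundary-free remainder with terms of the shapes $\int(z+\bar u)w_{xt}^2$, $\int(z_t+\bar u_t)w_xw_{xt}$, $\int z_t\bar\rho_xw_{xt}$, $\int z\bar\rho_{xt}w_{xt}$ and $\int w_{xt}z_{xt}$. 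The source $\bar\rho_{tt}-b\bar\rho_{xxt}$ is harmless, being $O(\delta(1+t)^{-7/4})$ in $L^2$ by Lemma \ref{Lemma 2.1}, so its $(1+t)^2$-weighted time integral is $O(\delta)$; $(z+\bar u)w_{xt}^2$ is absorbed into $b\lambda\int w_{xt}^2$ by smallness; in $\int(z_t+\bar u_t)w_xw_{xt}$ one places $L^\infty$ on $w_x$ and keeps $z_t$ in $L^2$ for the $z_t$-part and places $L^\infty$ on $\bar u_t$ while keeping $w_x$ in $L^2$ for the $\bar u_t$-part; $\int z_t\bar\rho_xw_{xt}$ is handled via $\bar\rho_x^2\le C\delta^2(1+t)^{-1}\widetilde\omega^2$ (cf.\ \eqref{2.2.19}) together with $\int_0^t(1+\tau)\|z_t\|^2\mathrm{d}\tau\le C$; and $\int z\bar\rho_{xt}w_{xt}$---the most delicate---is reduced, via the Gaussian bound $\bar\rho_{xt}^2\le C\delta^2(1+t)^{-2}\widetilde\omega^2$ and after the $(1+t)^2$ weighting, to $C\delta^2\int_0^t\int_{\mathbb R}z^2\widetilde\omega^2\mathrm{d}x\mathrm{d}\tau$, which Corollary \ref{Corollary 2.1} bounds. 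Finally, multiplying the identity by $(1+t)^2$ and integrating, and writing $(1+t)^2\frac{\mathrm d}{\mathrm dt}F=\frac{\mathrm d}{\mathrm dt}[(1+t)^2F]-2(1+t)F$ with $F=\int_{\mathbb R}(\frac{\lambda w_t^2}{2}-\mu w_tz_t)\mathrm{d}x$, the correction $-2(1+t)F$ is in fact favourable: it yields the extra good term $\lambda(1+t)\|w_t\|^2$ and only the benign cross term $2\mu(1+t)\int_{\mathbb R}w_tz_t\mathrm{d}x\le\frac{\lambda}{2}(1+t)\|w_t\|^2+C(1+t)\|z_t\|^2$. Since the first step gives $F\ge\frac{\lambda}{4}\|w_t\|^2-C\|z_t\|^2\ge\frac{\lambda}{4}\|w_t\|^2-C(1+t)^{-2}$, bounding the functional from above delivers \eqref{2.3.41}.

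The step I expect to be the main obstacle is precisely this bookkeeping, namely keeping the $(1+t)^2$-weighted estimate free of a logarithmic-in-time loss. A careless bound such as $(1+t)^2\|\bar u_t\|_{L^\infty}^2\|w_x(t)\|^2\sim\delta^2(1+t)^{-1}$ is not time-integrable; the remedy is never to spend the pointwise decay $\|w_x(t)\|^2\lesssim(1+t)^{-1}$, $\|w_{xx}(t)\|^2\lesssim(1+t)^{-2}$, $\|z_t(t)\|^2\lesssim(1+t)^{-2}$, etc.\ on such a factor, but to leave it under the time integral and absorb it using the integrated dissipation bounds $\int_0^t(1+\tau)^j\|\partial_x^{j+1}(w,z)(\tau)\|^2\mathrm{d}\tau\le C$ of Lemmas \ref{Lemma 2.3}--\ref{Lemma 2.5} together with the bounds on $\int_0^t(1+\tau)^2\|z_{xt}(\tau)\|^2\mathrm{d}\tau$ and $\int_0^t(1+\tau)^j\|z_t(\tau)\|^2\mathrm{d}\tau$ from the first step, while the $L^\infty$ norm is always borne by the self-similar profile or by a quantity governed by \eqref{2.3.2}--\eqref{2.3.3}. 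The only terms where crude $L^\infty$ bounds genuinely fail and one is forced to exploit the Gaussian shape of the profile together with Corollary \ref{Corollary 2.1} are those carrying a bare $z$ against a derivative of $\bar\rho$---exactly the mechanism that was already decisive in Lemma \ref{Lemma 2.3}.
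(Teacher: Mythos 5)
Your setup is the paper's: the same $t$-differentiated energy identity (your five remainder terms are exactly the paper's $I_{18}$--$I_{22}$ in \eqref{2.3.42}), the same term-by-term treatment, and the same algebraic bound for $\|z_t\|^2$ via $\eqref{2.1.7}_1$ (the paper's \eqref{2.3.49}). The gap is in the final weighting step, and it is a genuine one: you have the sign of the correction term backwards. From $(1+t)^2\frac{\mathrm d}{\mathrm dt}F=\frac{\mathrm d}{\mathrm dt}\bigl[(1+t)^2F\bigr]-2(1+t)F$, the term $-2(1+t)F$ sits on the left of the differential inequality, so after integration it appears on the right as $+2\int_0^t(1+\tau)F\,\mathrm d\tau$, which with $F=\int_{\mathbb R}(\tfrac{\lambda w_t^2}{2}-\mu w_tz_t)\,\mathrm dx$ contains the \emph{unfavourable} contribution $\lambda\int_0^t(1+\tau)\|w_\tau\|^2\,\mathrm d\tau$, not an "extra good term $\lambda(1+t)\|w_t\|^2$." Your first step supplies $\int_0^t(1+\tau)^j\|z_\tau\|^2\,\mathrm d\tau\le C$ for $j=0,1$, but nothing controls $\int_0^t(1+\tau)\|w_\tau\|^2\,\mathrm d\tau$: the analogous algebraic route through $\eqref{2.1.7}_2$ gives $w_t=bw_{xx}-(\lambda w-\mu z)-\bar\rho_t+b\bar\rho_{xx}$, and the available estimate \eqref{2.3.4} only bounds $\int_0^t\|(\lambda w-\mu z)(\tau)\|^2\,\mathrm d\tau$ with weight $(1+\tau)^0$, so this term cannot be absorbed. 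A Gronwall argument does not rescue it either, since the factor $2/(1+t)$ integrates to $2\log(1+t)$ and the resulting constant exactly cancels the $(1+t)^2$ weight.

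The paper closes this by a bootstrap you are missing: it first converts the dissipation $\int_{\mathbb R}(\lambda w_t-\mu z_t)^2\,\mathrm dx$ into a genuine good term $\tfrac{\lambda^2}{4}\int_{\mathbb R}w_t^2\,\mathrm dx$ on the left via $(\lambda w_t)^2\le 2(\lambda w_t-\mu z_t)^2+2(\mu z_t)^2$ combined with the bound \eqref{2.3.49} on $\int_{\mathbb R}z_t^2\,\mathrm dx$ (this is \eqref{2.3.50}), then integrates successively with weights $(1+t)^0$ and $(1+t)^1$ to obtain \eqref{2.3.51} and \eqref{2.3.52}, the latter giving precisely $\int_0^t(1+\tau)\bigl(\|w_\tau\|^2+\|z_\tau\|^2\bigr)\,\mathrm d\tau\le C$; only then is \eqref{2.3.48} multiplied by $(1+t)^2$, with the correction term controlled by \eqref{2.3.52}. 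Everything else in your proposal (the treatment of $I_{21}$ via the Gaussian shape of $\bar\rho_{xt}$ and Corollary \ref{Corollary 2.1}, the absorption of $I_{22}$, the decay of the source $\bar\rho_{tt}-b\bar\rho_{xxt}$) matches the paper and is sound; you need only replace the claimed "favourable correction" by this two-stage weight bootstrap.
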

\begin{proof}
Firstly, having $\int_{\mathbb{R}}\lambda w_{t}\times\partial_t\eqref{2.1.7}_2\mathrm{d}x-\int_{\mathbb{R}}\mu z_{t}\times\partial_t\eqref{2.1.7}_2\mathrm{d}x$, then applying integration by parts and the equation $\partial_t\eqref{2.1.7}_1$, we can get
\begin{equation}\label{2.3.42}
\begin{split}
&\frac{\rm d}{{\rm d}t} \int_{\mathbb{R}}
   \left(\frac{\lambda w_{t}^2}{2}-\mu w_{t}z_{t}\right)\mathrm{d}x
   +{b\lambda}\int_{\mathbb{R}}w_{xt}^2\mathrm{d}x
   +\int_{\mathbb{R}} (\lambda w_{t}-\mu z_{t})^2\mathrm{d}x\\
   &~~~+\int_{\mathbb{R}} (\lambda w_{t}-\mu z_{t})(\bar\rho_{tt}-b\bar\rho_{xxt})\mathrm{d}x\\
&=(a+b)\mu\int_{\mathbb{R}}w_{xt}z_{xt}\mathrm{d}x-\kappa\mu\int_{\mathbb{R}}w_{xt}[(z+\bar u)w_x+z\bar\rho_x]_t\mathrm{d}x\\
&=(a+b)\mu\int_{\mathbb{R}}w_{xt}z_{xt}\mathrm{d}x
-\kappa\mu\int_{\mathbb{R}}w_{xt}w_{x}z_{t}\mathrm{d}x
-\kappa\mu\int_{\mathbb{R}}w_{xt}\bar u_{t}w_{x}\mathrm{d}x
-\kappa\mu\int_{\mathbb{R}}w_{xt}\bar \rho_{x}z_{t}\mathrm{d}x\\
&~~~~-\kappa\mu\int_{\mathbb{R}}w_{xt}z\bar \rho_{x t}\mathrm{d}x
-\kappa\mu\int_{\mathbb{R}}(z+\bar u)w_{xt}^2\mathrm{d}x\\
&=(a+b)\mu\int_{\mathbb{R}}w_{xt}z_{xt}\mathrm{d}x+\sum_{i=18}^{22}I_{i}.
\end{split}
\end{equation}
Noting that $|\bar{u}_{t}| \leq C \delta (1+t)^{-1}$, using $\eqref{2.1.1}_2$ and \eqref{2.3.2}, we have
\begin{equation}\label{2.3.43}
 \begin{split}
  &I_{18}+I_{19}+I_{20}\\
\leq&\eta\int_{\mathbb R}w_{xt}^2\mathrm{d}x
 +C_\eta\int_{\mathbb R}\|w_{x}\|^2_{L^{\infty}}z_{t}^2\mathrm{d}x +C_\eta\int_{\mathbb R}\|\bar u_{t}\|^2_{L^{\infty}}w_{x}^2\mathrm{d}x+C_\eta\int_{\mathbb R}\|\bar \rho_{x}\|^2_{L^{\infty}}z_{t}^2\mathrm{d}x\\
\leq&\eta\int_{\mathbb R}w_{xt}^2\mathrm{d}x
 +C_\eta\varepsilon_0^2(1+t)^{-\frac{3}{2}}\int_{\mathbb R}z_{t}^2\mathrm{d}x +C_\eta\delta^2(1+t)^{-2}\int_{\mathbb R}w_{x}^2\mathrm{d}x+C_\eta\delta^2(1+t)^{-1}\int_{\mathbb R}z_{t}^2\mathrm{d}x\\
\leq&\eta\int_{\mathbb R}w_{xt}^2\mathrm{d}x
 +C_\eta(\varepsilon_0^2+\delta^2)(1+t)^{-1}\int_{\mathbb R}z_{t}^2\mathrm{d}x +C_\eta\delta^2(1+t)^{-2}\int_{\mathbb R}w_{x}^2\mathrm{d}x.
\end{split}
\end{equation}
Similar to the calculation of \eqref{2.3.10} and \eqref{2.3.11}, we obtain
\begin{equation}\label{2.3.44}
I_{21}\leq\eta \int_{\mathbb{R}}w_{xt}^2\mathrm{d}x+C_\eta \delta^2(1+t)^{-2}\int_{\mathbb{R}}z^2\widetilde\omega^2\mathrm{d}x,
\end{equation}
and
\begin{equation}\label{2.3.45}
 I_{22}\leq C(\varepsilon_0+\delta)\int_{\mathbb{R}}w_{xt}^2\mathrm{d}x.
\end{equation}
Putting \eqref{2.3.43}-\eqref{2.3.45} into \eqref{2.3.42}, and choosing $\eta$ suitably small, we can conclude that
\begin{equation}\label{2.3.46}
\begin{split}
&\frac{\rm d}{{\rm d}t} \int_{\mathbb{R}}
   \left(\frac{\lambda w_{t}^2}{2}-\mu w_{t}z_{t}\right)\mathrm{d}x
   +\frac{b\lambda}{4}\int_{\mathbb{R}}w_{xt}^2\mathrm{d}x
   +\frac{1}{2}\int_{\mathbb{R}} (\lambda w_{t}-\mu z_{t})^2\mathrm{d}x\\
\leq&\frac{1}{2}\int_{\mathbb{R}} (\bar\rho_{tt}
    -b\bar\rho_{xxt})^2\mathrm{d}x
+C(\varepsilon_0+\delta)(1+t)^{-1}\int_{\mathbb{R}}z_t^2\mathrm{d}x+C\delta(1+t)^{-2}\int_{\mathbb{R}}w_x^2\mathrm{d}x\\
&+C\delta(1+t)^{-2}\int_{\mathbb{R}}z^2\widetilde\omega^2\mathrm{d}x+C\int_{\mathbb{R}}z_{xt}^2\mathrm{d}x.
\end{split}
\end{equation}
Similarly, we can get from $\int_{\mathbb{R}}K z_t\times\partial_t\eqref{2.1.7}_1\mathrm{d}x$ ($K$ is sufficiently large) that
\begin{equation}\label{2.3.47}
\begin{split}
\frac{\rm d}{{\rm d}t} \int_{\mathbb{R}}\frac{ K z_{t}^2}{2}\mathrm{d}x +\frac{K a}{2}\int_{\mathbb{R}}z_{xt}^2\mathrm{d}x
\leq& C(\varepsilon_0+\delta)\int_{\mathbb{R}}w_{xt}^2\mathrm{d}x+C(\varepsilon_0+\delta)(1+t)^{-1}\int_{\mathbb{R}}z_{t}^2\mathrm{d}x\\
   &+C\delta(1+t)^{-2}\int_{\mathbb{R}}w_{x}^2\mathrm{d}x+C\delta(1+t)^{-2}\int_{\mathbb{R}}z^2 \widetilde\omega^2\mathrm{d}x.
\end{split}
\end{equation}
Summing \eqref{2.3.47} to \eqref{2.3.46}, and using Lemma \ref{Lemma 2.1}, we can get
\begin{equation}\label{2.3.48}
\begin{split}
&\frac{\rm d}{{\rm d}t} \int_{\mathbb{R}}
   \left(\frac{\lambda{w_{t}}^2}{2} +\frac{K z_{t}^2}{2}-\mu w_{t}z_{t}\right) \mathrm{d}x
   +\frac{b\lambda}{8}\int_{\mathbb{R}}w_{xt}^2\mathrm{d}x+\frac{K a}{4}\int_{\mathbb{R}}z_{xt}^2\mathrm{d}x+\frac{1}{2}\int_{\mathbb{R}}{(\lambda w_{t}-\mu z_{t})^2}\mathrm{d}x\\
\leq&C\delta(1+t)^{-\frac{7}{2}}
    +C\delta(1+t)^{-2}\int_{\mathbb{R}}w_{x}^2\mathrm{d}x+C\delta(1+t)^{-2}\int_{\mathbb{R}} z^2 \widetilde\omega^2\mathrm{d}x\\
    &+C(\varepsilon_0+\delta)(1+t)^{-1}\int_{\mathbb{R}}z_{t}^2\mathrm{d}x.
\end{split}
\end{equation}
Now we only need to estimate the last term on the right-hand side of \eqref{2.3.48}. By using the equation $\eqref{2.1.7}_1$ and Lemma \ref{Lemma 2.1}, together with \eqref{2.3.3}, it is direct to derive that
\begin{equation}\label{2.3.49}
\int_{\mathbb{R}}z_t^2\mathrm{d}x
\leq C\int_{\mathbb{R}}(w_{xx}^2+z_{xx}^2)\mathrm{d}x+C(1+t)^{-1}\int_{\mathbb{R}}(w_{x}^2+z_{x}^2)\mathrm{d}x+C\delta(1+t)^{-1}\int_{\mathbb{R}}z^2\widetilde\omega^2\mathrm{d}x.
\end{equation}
Noting that $(\lambda w_{t})^2\leq 2(\lambda w_t-\mu z_t)^2+2(\mu z_t)^2$, it follows from \eqref{2.3.48} and \eqref{2.3.49} that
\begin{equation}\label{2.3.50}
\begin{split}
&\frac{\rm d}{{\rm d}t} \int_{\mathbb{R}}
   \left(\frac{\lambda{w_{t}}^2}{2} +\frac{K z_{t}^2}{2}-\mu w_{t}z_{t}\right) \mathrm{d}x
   +\frac{b\lambda}{8}\int_{\mathbb{R}}w_{xt}^2\mathrm{d}x+\frac{K a}{4}\int_{\mathbb{R}}z_{xt}^2\mathrm{d}x+\frac{\lambda^2}{4}\int_{\mathbb{R}}w_t^2\mathrm{d}x+\frac{1}{2}\int_{\mathbb{R}}z_t^2\mathrm{d}x\\
\leq&C\delta(1+t)^{-\frac{7}{2}}
    +C\int_{\mathbb{R}}(w_{xx}^2+z_{xx}^2)\mathrm{d}x+C(1+t)^{-1}\int_{\mathbb{R}}(w_{x}^2+z_{x}^2)\mathrm{d}x+C\delta(1+t)^{-1}\int_{\mathbb{R}}z^2\widetilde\omega^2\mathrm{d}x.
\end{split}
\end{equation}
Integrating \eqref{2.3.50} with respect to $t$, then employing \eqref{2.2.17}, \eqref{2.3.4} and \eqref{2.3.16}, we obtain
\begin{equation}\label{2.3.51}
\begin{split}
&\|w_{t}(t)\|^{2}+\|z_{t}(t)\|^{2}+\int_{0}^{t}\left(\left\|w_{xt}(\tau)\right\|^{2}+\|z_{xt}(\tau)\|^{2}+\|w_{t}(\tau)\|^2+\|z_{t}(\tau)\|^2\right) \mathrm{d} \tau\\
\leq& C\left(\left\|w_{0}\right\|_{2}^{2}+\left\|z_{0}\right\|_{2}^{2}+\delta\right).
\end{split}
\end{equation}
Multiplying \eqref{2.3.50} by $(1 + t)$, we integrate it to obtain
\begin{equation}\label{2.3.52}
\begin{split}
&(1+t)(\|w_{t}(t)\|^{2}+\|z_{t}(t)\|^{2})+\int_{0}^{t}(1+\tau)\left(\left\|w_{xt}(\tau)\right\|^{2}+\|z_{xt}(\tau)\|^{2}+\|w_{t}(\tau)\|^2+\|z_{t}(\tau)\|^2\right) \mathrm{d} \tau\\
\leq& C\left(\left\|w_{0}\right\|_{2}^{2}+\left\|z_{0}\right\|_{2}^{2}+\delta\right).
\end{split}
\end{equation}
At last, multiplying \eqref{2.3.48} by $(1+t)^2$, then using \eqref{2.2.17}, \eqref{2.3.4} and \eqref{2.3.52}, we can conclude \eqref{2.3.41}. The proof of Lemma \ref{Lemma 2.6} is completed.
\end{proof}
Combining Lemma \ref{Lemma 2.3}-Lemma \ref{Lemma 2.6}, one can verify that {\it a priori} assumption $N(T)\leq \varepsilon_0^2 \ll 1$ is closed. In fact, under the {\it a priori} assumption, it is easy to infer that \eqref{2.1.9} holds, the assumption $N(T)\leq \varepsilon_0^2 \ll 1$ always holds provided $\delta$ and $\varepsilon_0$ are sufficiently small. The global existence of the solutions to the Cauchy problem \eqref{2.1.7}-\eqref{2.1.8} follows from the standard continuation argument based on the local existence and the {\it a priori} estimates. The proof of Theorem \ref{Thm 2.1} is completed.

\section{Initial-boundary value problem}\label{S3}
{\numberwithin{equation}{subsection}

\subsection{The case of Dirichlet boundary condition}\label{S3.1}
\subsubsection{Reformulation of the problem and theorem}\label{S3.1.1}
In this subsection, we consider the problem \eqref{1.2} and \eqref{1.6} with the Dirichlet boundary condition \eqref{1.7}. Motivated by \cite{Nishihara-Yang1999}, in the case of $u_+\neq u_-$, without loss of generality, we assume $u_-<u_+$, putting $\rho_t=0$ and $-b\rho_{xx}=0$ in $\eqref{1.2}_2$, we have $u_t-au_{xx}+\kappa[u\rho_x]_x=0$ and $\lambda\rho-\mu u=0$. To construct the diffusion waves $(\bar u,\bar \rho)(x,t)$, it is known that we have a self-similar solution $\tau=\phi(x/{\sqrt{1+t}})$ satisfying
\begin{equation}\notag
\left\{\begin{array}{l}
\tau_t=\left[\left(a-\frac{\kappa\mu}{\lambda}\tau\right)\tau_x\right]_x,\quad (x,t)\in\mathbb{R}\times\mathbb{R}^+,\\[2mm]
\tau|_{x=\pm\infty}=u_{\pm},
 \end{array}
        \right.
\end{equation}
for any constant $u_->0$. Therefore, for $u_-<\beta< u_+$, there exists a unique $\bar u(x,t)$ in the form of $\phi(x/{\sqrt{1+t}})|_{x\geq 0}$ satisfying
\begin{equation}\label{3.1.1}
\left\{\begin{array}{l}
\bar u_t
   -
   [(a-\frac{\kappa\mu}{\lambda}\bar u)\bar u_x]_x=0,\\[2mm]
\bar u|_{x=0}=\beta, \quad \bar u|_{x=+\infty}=u_{+}.\\
 \end{array}
        \right.
\end{equation}
Defining the perturbation as \eqref{2.1.6}, we have the reformulated problem
\begin{equation}\label{3.1.2}
\left\{\begin{array}{l}
z_t
   -az_{xx}
   +\kappa[(z+\bar{u})w_x+z\bar \rho_x]_x=0,\\[2mm]
w_t
    -bw_{xx}
    +\lambda w
    -\mu z+\bar\rho_t
    -b\bar\rho_{xx}=0,\\[2mm]
 \end{array}
        \right.
\end{equation}
with the initial-boundary data
\begin{equation}\label{3.1.3}
\left\{\begin{array}{l}
\left(w,z)\right|_{t=0}=\left(w_{0},z_{0}\right)(x) \rightarrow 0\quad
 \text {as}
  \quad
  x\rightarrow \infty,\\[2mm]
\left(w,z)\right|_{x=0}=(0,0).
 \end{array}
        \right.
\end{equation}
\begin{theorem}\label{Thm 3.1} {\bf (Dirichlet boundary).}
Suppose that $u_-<u_+$ and $\delta :=|u_+|+|u_-|$. There exists a positive constant $\varepsilon_0$ such that if $\delta+\|w_0\|_2+\|z_0\|_2 < \varepsilon_0$, then the initial-boundary value problem \eqref{3.1.2}-\eqref{3.1.3} admits a unique time-global solution $(w,z)(x, t)$, which satisfies
\begin{equation}\notag
w \in C^{i, \infty}\left([0, \infty) ; H^{2-i}\right), \quad i=0,1,2,~~~
z \in C^{i, \infty}\left([0, \infty) ; H^{2-i}\right), \quad i=0,1,2,
\end{equation}
and
\begin{equation}\label{3.1.4}
\begin{split}
&\left\|w(t)\right\|_{2}^{2}+\left\|z(t)\right\|_2^{2}+\|w_t(t)\|^2+\|z_t(t)\|^2\\
&+\int_0^t\bigg(\|w_x(\tau)\|_1^2+\|z_x(\tau)\|_1^2+\|(\lambda w-\mu z)(\tau)\|_1^2+\|w_{xt}(\tau)\|^2+\|z_{xt}(\tau)\|^2\\
&+\|(\lambda w_t-\mu z_t)(\tau)\|^2\bigg)\mathrm{d}\tau\\
\leq& C\left(\left\|w_{0}\right\|_{2}^{2}+\left\|z_{0}\right\|_{2}^{2}+\delta\right).
\end{split}
\end{equation}
Moreover, we have
\begin{equation}\label{3.1.5}
\lim _{t \rightarrow+\infty} \sup _{x \in \mathbb{R}^+}\|(w,z)(x, t)\|_1=0,
\end{equation}
or
\begin{equation}\label{3.1.6}
\lim _{t \rightarrow+\infty} \sup _{x \in \mathbb{R}^+}\|(\rho-\bar \rho, u-\bar u)(x, t)\|_1=0,
\end{equation}
that is to say that the solution $(u,\rho)(x,t)$ to the initial-boundary value problem \eqref{1.2}, \eqref{1.6} and \eqref{1.7} tends time-asymptotically to the diffusion waves.
\end{theorem}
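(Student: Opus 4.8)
\emph{Overall strategy.} Exactly as in Section~\ref{S2}, the solution is produced by pairing a local existence theorem (standard iteration, omitted) with the continuation argument, so everything reduces to the \emph{a priori} estimate \eqref{3.1.4} under an \emph{a priori} assumption. Since \eqref{3.1.4} carries no time weights, I would take the plain assumption $N(T):=\sup_{0\le t\le T}(\|w(t)\|_2^2+\|z(t)\|_2^2+\|w_t(t)\|^2+\|z_t(t)\|^2)\le\varepsilon_0^2$, whence the Sobolev inequality gives $\|\partial_x^k w\|_{L^\infty}+\|\partial_x^k z\|_{L^\infty}\le C\varepsilon_0$ for $k=0,1$. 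I would first record the properties of $\bar u(x,t)=\phi(x/\sqrt{1+t})$ solving \eqref{3.1.1}: it obeys the decay estimates of Lemma~\ref{Lemma 2.1} with $\mathbb{R}$ replaced by $\mathbb{R}^+$, and in addition its boundary traces decay, $|\bar u_x(0,t)|\le C\delta(1+t)^{-1/2}$ and $|\bar u_{xx}(0,t)|\le C\delta(1+t)^{-1}$ (evaluating \eqref{3.1.1} at $x=0$ and using $\bar u_t(0,t)=0$ one even gets $|\bar u_{xx}(0,t)|\le C\delta^2(1+t)^{-1}$). One also needs the half-line version of Corollary~\ref{Corollary 2.1}: since $z(0,t)=0$, every boundary contribution in the integration by parts leading to \eqref{2.2.17} vanishes, so $\int_0^t\int_{\mathbb{R}^+}z^2\widetilde\omega^2\,\mathrm{d}x\,\mathrm{d}\tau\le C\int_0^t(\|z_x\|^2+\|w_x\|^2)\,\mathrm{d}\tau+C\|z_0\|^2$ holds verbatim.

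\emph{Zeroth- and first-order estimates.} The zeroth-order estimate is the literal analogue of Lemma~\ref{Lemma 2.3}: test $\eqref{3.1.2}_2$ with $\lambda w$ and with $-\mu z$, add, rewrite $\mu\int w_tz$ via $\eqref{3.1.2}_1$ to bring out the good term $\int_{\mathbb{R}^+}(\lambda w-\mu z)^2$, and add $K$ times $\eqref{3.1.2}_1$ tested with $z$. Every boundary term produced at $x=0$ carries a factor $w(0,t)=0$ or $z(0,t)=0$ and drops out, so integrating in $t$ (no weight needed) yields $\|w(t)\|^2+\|z(t)\|^2+\int_0^t(\|w_x\|^2+\|z_x\|^2+\|\lambda w-\mu z\|^2)\,\mathrm{d}\tau\le C(\|w_0\|^2+\|z_0\|^2+\delta)$. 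For the first-order estimate I would differentiate \eqref{3.1.2} in $x$ and rerun Lemma~\ref{Lemma 2.4}; now genuine boundary terms of the schematic shape $w_{xx}(0,t)w_x(0,t)$, $w_{xx}(0,t)z_x(0,t)$, $w_x(0,t)z_{xx}(0,t)$ and $z_{xx}(0,t)z_x(0,t)$ appear. I would tame them by (i) evaluating \eqref{3.1.2} at $x=0$ (using $w(0,t)=z(0,t)=w_t(0,t)=z_t(0,t)=0$ and $\bar u(0,t)=\beta$) to express $w_{xx}(0,t)$ and $z_{xx}(0,t)$ through $w_x(0,t),z_x(0,t)$ and the decaying traces $\bar u_x(0,t),\bar u_{xx}(0,t)$; (ii) bounding traces by $|f(0,t)|^2\le 2\|f\|\,\|f_x\|$; and (iii) absorbing the result into the dissipation $\frac{b\lambda}{4}\|w_{xx}\|^2+\frac{a}{2}\|z_{xx}\|^2$ with the help of the smallness of $\varepsilon_0+\delta$. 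What is left are time-integrable lower-order quantities such as $\|w_x\|^2$ (already controlled at the zeroth order) and fixed negative powers of $1+t$, so integrating in $t$ closes $\|w_x(t)\|^2+\|z_x(t)\|^2+\int_0^t(\|w_{xx}\|^2+\|z_{xx}\|^2+\|\lambda w_x-\mu z_x\|^2)\,\mathrm{d}\tau\le C(\|w_0\|_1^2+\|z_0\|_1^2+\delta)$ with no Gr\"onwall loop.

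\emph{Time derivatives and the second-order norms.} Here I would depart from Section~\ref{S2}. A $\partial_x^2$ energy identity would produce boundary terms containing third derivatives at $x=0$, so instead I carry out the $\partial_t$ analogue of Lemma~\ref{Lemma 2.6}, testing $\partial_t\eqref{3.1.2}_2$ against $\lambda w_t$ and $-\mu z_t$; all its boundary terms vanish because $w_t(0,t)=z_t(0,t)=0$. The term $\int z_t^2$ that in the Cauchy case was converted through Lemma~\ref{Lemma 2.5} (unavailable here) is instead produced with a favourable sign by also testing $\eqref{3.1.2}_1$ with $z_t$: the integration by parts has vanishing boundary term and the leftover $\kappa\int[(z+\bar u)w_x+z\bar\rho_x]z_{xt}$ is absorbed by $\eta\|z_{xt}\|^2$. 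This gives $\|w_t(t)\|^2+\|z_t(t)\|^2+\int_0^t(\|w_{xt}\|^2+\|z_{xt}\|^2+\|\lambda w_t-\mu z_t\|^2+\|z_t\|^2)\,\mathrm{d}\tau\le C(\|w_0\|_2^2+\|z_0\|_2^2+\delta)$. The pointwise second-order norms are then read off the equations: $\eqref{3.1.2}_2$ gives $bw_{xx}=w_t+(\lambda w-\mu z)+\bar\rho_t-b\bar\rho_{xx}$, hence $\|w_{xx}(t)\|^2\le C(\|w_t\|^2+\|\lambda w-\mu z\|^2+\delta^2(1+t)^{-3/2})$, and $\eqref{3.1.2}_1$ gives $az_{xx}=\kappa(z+\bar u)w_{xx}+z_t+\kappa[(z_x+\bar u_x)w_x+z_x\bar\rho_x+z\bar\rho_{xx}]$, which together with $\|z+\bar u\|_{L^\infty}\ll1$ yields $\|z_{xx}(t)\|^2\le C(\|w_t\|^2+\|z_t\|^2+\text{lower order})$. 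Collecting all of the above gives \eqref{3.1.4}; for $\varepsilon_0,\delta$ small it strictly improves $N(T)\le\varepsilon_0^2$, so the continuation argument furnishes the global solution with the stated regularity.

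\emph{Asymptotics, and the main obstacle.} From \eqref{3.1.4} one has $\int_0^\infty(\|w_x\|^2+\|z_x\|^2)\,\mathrm{d}\tau<\infty$ and $\int_0^\infty(\|w_{xt}\|^2+\|z_{xt}\|^2)\,\mathrm{d}\tau<\infty$, so $\frac{\mathrm{d}}{\mathrm{d}t}\|w_x\|^2=2\int_{\mathbb{R}^+}w_xw_{xt}\,\mathrm{d}x$ lies in $L^1(0,\infty)$ by Cauchy--Schwarz; hence $\|w_x(t)\|\to0$, and likewise $\|z_x(t)\|\to0$. Then $\|w(\cdot,t)\|_{L^\infty}^2\le 2\|w\|\,\|w_x\|\to0$ and $\|w_x(\cdot,t)\|_{L^\infty}^2\le 2\|w_x\|\,\|w_{xx}\|\to0$ (the second factors bounded via \eqref{3.1.4}), and the same for $z$, which is \eqref{3.1.5}; \eqref{3.1.6} is its restatement through $(w,z)=(\rho-\bar\rho,u-\bar u)$. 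The step I expect to be decisive is the first-order estimate: one must first extract the boundary identities for $w_{xx}(0,t)$ and $z_{xx}(0,t)$ from the reformulated system and then use the decay of the diffusion-wave traces to control the $x=0$ boundary terms; this, combined with the necessity of recovering $\|w_{xx}\|$ and $\|z_{xx}\|$ algebraically from the PDEs rather than from a higher-order energy identity, is exactly what separates the Dirichlet problem from the Cauchy problem.
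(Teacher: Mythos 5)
Your proposal is correct and follows essentially the same route as the paper's proof: the same zeroth- and first-order energy identities with the key boundary term $-bw_{xx}(0,t)(\lambda w_x-\mu z_x)(0,t)$ controlled through the identity $bw_{xx}(0,t)=-b\bar\rho_{xx}(0,t)$ (the paper's \eqref{3.1.14}) plus a trace inequality, the same $\partial_t$-energy estimate used in place of a $\partial_x^2$ one, the same algebraic recovery of $\|w_{xx}\|$ and $\|z_{xx}\|$ directly from the equations, and the same $L^1$-in-time lemma for the asymptotics. The only (harmless) deviations are that you generate $\int_0^t\|z_t\|^2\mathrm{d}\tau$ by testing $\eqref{3.1.2}_1$ with $z_t$ rather than by substituting the pointwise bound \eqref{3.1.29}, and you obtain $\frac{\mathrm{d}}{\mathrm{d}t}\|w_x\|^2\in L^1(0,\infty)$ directly from $2\int_{\mathbb{R}^+}w_xw_{xt}\,\mathrm{d}x$ rather than from the differentiated equation.
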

\begin{remark}
 As one can see from Theorem \ref{Thm 3.1},  the solution $(w,z)(x, t)$ has no decay rate. The main reason is that we encounter difficulties in dealing with some boundary terms, and finally use the structure of the reformulated equations to close the {\it a priori} assumption.
\end{remark}
\subsubsection{Preliminaries}\label{S3.1.2}
In this subsection, we will give some fundamental dissipative properties of nonlinear diffusion waves $(\bar u,\bar \rho)(x,t)$ on the half line $\mathbb{R}^+$ which is similar to Lemma \ref{Lemma 2.1}, and some inequalities concerning the heat kernel on the half line $\mathbb{R}^+$.
\begin{lemma}\label{Lemma 3.1}
For each $p\in [1,\infty]$ is an integer and $u_-\leq u_{+},$  it is easy to verify that
\begin{equation}\label{3.1.7}
\begin{split}
&u_-\leq\beta\leq \bar u(x,t)\leq u_{+}, \\
&\|\partial_{t}^{l} \partial_{x}^{k}\bar u(x,t)\|_{L^{p}}\leq C|u_+-u_-|(1+t)^{-\frac{k}{2}-l+\frac{1}{2p}}, \quad k,l\geq0, ~~k+l\geq 1,\\
&\|\partial_{t}^{l} \partial_{x}^{k}\bar \rho(x,t)\|_{L^{p}}\leq C|u_+-u_-|(1+t)^{-\frac{k}{2}-l+\frac{1}{2p}}, ~\quad k,l\geq0, ~~k+l\geq 1.
\end{split}
\end{equation}
\end{lemma}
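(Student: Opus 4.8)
The plan is to follow the strategy used for Lemma \ref{Lemma 2.1}, adapted from the whole line to the half line. Recall that $\bar u(x,t)=\phi\!\left(x/\sqrt{1+t}\right)$ on $\mathbb{R}^+$, where $\phi=\phi(\xi)$, $\xi\geq 0$, is the self-similar profile solving the ODE in \eqref{3.1.1} with $\phi(0)=\beta$ and $\phi(+\infty)=u_+$; by \cite{Atkinson-Peletier1974,van Duyn-Peletier1977} this profile exists, is unique, and is strictly increasing. Monotonicity then gives at once $u_-\leq\beta=\phi(0)\leq\phi(\xi)\leq u_+$, hence $u_-\leq\beta\leq\bar u(x,t)\leq u_+$, which is the first line of \eqref{3.1.7}; the bound for $\bar\rho$ is immediate from $\bar\rho=\frac{\mu}{\lambda}\bar u$.

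Next I would establish the Gaussian pointwise decay of $\phi$ and its derivatives on $[0,\infty)$:
\[
\sum_{k\geq 1}\left|\phi^{(k)}(\xi)\right|+\left|\phi(\xi)-u_+\right|\leq C\left|u_+-u_-\right|\,\mathrm{e}^{-C_0\xi^2},\qquad \xi\geq 0,
\]
for suitable $C,C_0>0$. This is obtained exactly as \eqref{2.2.5}--\eqref{2.2.7}: from the integral representation (the analogue of \eqref{2.2.3}--\eqref{2.2.4} with base point $\xi_0=0$), using that $f(\phi)=a-\frac{\kappa\mu}{\lambda}\phi$ lies between two positive constants on $[\beta,u_+]$ (here one uses $|u_\pm|,|\beta|\leq\delta\ll a\lambda/(\kappa\mu)$), that $\phi'(0)$ is comparable to $u_+-\beta$ so that $|\phi'(0)|\leq C|u_+-u_-|$, and that the factor $\mathrm{e}^{-\int_0^\xi \eta/(2f(\phi(\eta)))\,\mathrm{d}\eta}$ supplies the Gaussian weight; the higher derivatives follow by differentiating the ODE and bootstrapping, as in \cite{Hsiao-Liu1992}.

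Finally I would convert these into the stated $L^p(\mathbb{R}^+)$ decay rates. Writing $\xi=x/\sqrt{1+t}$ and applying the chain rule as in \eqref{2.2.8}, each $\partial_t^l\partial_x^k\bar u$ is a finite linear combination of terms $(1+t)^{-k/2-l}\,\xi^{m}\phi^{(j)}(\xi)$ with $m\leq j$ and, when $k+l\geq 1$, with $j\geq 1$. Taking the $L^p$ norm in $x$ and substituting $x=\sqrt{1+t}\,\xi$ produces the factor $(1+t)^{1/(2p)}$, while $\|\xi^m\phi^{(j)}\|_{L^p(0,\infty)}\leq C|u_+-u_-|$ by the previous step (since $\xi^m\mathrm{e}^{-C_0\xi^2}\in L^p(0,\infty)$). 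This gives $\|\partial_t^l\partial_x^k\bar u(\cdot,t)\|_{L^p}\leq C|u_+-u_-|(1+t)^{-k/2-l+1/(2p)}$, and the same bound for $\bar\rho$ up to the constant $\mu/\lambda$. The only genuinely delicate point is the first part of step two: identifying the correct half-line profile with $\phi(0)=\beta$ and checking that its amplitude on $[0,\infty)$ — and hence all its derivative bounds — is controlled by $|u_+-u_-|$ rather than merely by $u_+-\beta$, which rests on monotonicity together with the shooting/integral-representation argument bounding $\phi'(0)$; once the Gaussian profile estimate is secured, the passage to the $L^p$ rates is the same routine bookkeeping as in Lemma \ref{Lemma 2.1}.
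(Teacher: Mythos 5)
Your proposal is correct and follows exactly the route the paper intends: Lemma \ref{Lemma 3.1} is stated without proof as the half-line analogue of Lemma \ref{Lemma 2.1}, resting on the construction of $\bar u=\phi(x/\sqrt{1+t})|_{x\geq 0}$ in \eqref{3.1.1} together with the profile estimates \eqref{2.2.3}--\eqref{2.2.8}. You also correctly isolate the only point needing care on the half line, namely that $\phi'(0)$ is comparable to $u_+-\beta\leq u_+-u_-$, so the Gaussian bounds on $[0,\infty)$ carry the factor $|u_+-u_-|$.
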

For the inequalities concerning the heat kernel on the half line $\mathbb{R}^+$, we only need to define
\begin{equation}\label{3.1.8}
\widetilde\omega(x, t)=(1+t)^{-\frac{1}{2}} \exp \left\{-\frac{\alpha x^{2}}{1+t}\right\},
\quad g(x, t)=\int_{0}^{x} \widetilde\omega(y, t) \mathrm{d} y.
\end{equation}
It is easy to check that
\begin{equation}\notag
4 \alpha g_{t}=\widetilde\omega_{x}, \quad\|g(\cdot, t)\|_{L^{\infty}}=\frac{1}{2}\sqrt{\pi} \alpha^{-\frac{1}{2}}.
\end{equation}
Similar to the calculation of \eqref{2.2.11} and \eqref{2.2.17}, we can get the inequalities in Lemma \ref{Lemma 3.2} and Corollary \ref{Corollary 3.1}, the details are omitted.
\begin{lemma}\label{Lemma 3.2}
For $0<T \leq+\infty,$ assume that $h(x, t)$ satisfies
$$
h_{x} \in L^{2}\left(0,T: L^{2}(\mathbb{R}^+)\right), \quad h_{t} \in L^{2}\left(0, T: H^{-1}(\mathbb{R}^+)\right).
$$
Then the following estimate holds:
\begin{equation}\label{3.1.9}
\begin{split}
&\int_0^T \int_{\mathbb{R}^+} h^{2} \widetilde\omega^{2} \mathrm{d}x \mathrm{d}t \\
\leq&
 \pi\|h(0)\|^{2}+\pi \alpha^{-1} \int_{0}^{T}\left\|h_{x}(t)\right\|^{2} \mathrm{d}t+8 \alpha \int_{0}^{T}\left\langle h_{t}, h g^{2}\right\rangle_{H^{-1} \times H^{1}} \mathrm{d}t,
\end{split}
\end{equation}
where $\langle \cdot,\cdot \rangle$ denotes the inner product on $H^{-1}(\mathbb{R}^+) \times H^{1}(\mathbb{R}^+)$.
\end{lemma}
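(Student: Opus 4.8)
The plan is to follow the whole-line argument behind Lemma \ref{Lemma 2.2}, paying attention only to the boundary contribution at $x=0$, which is precisely what forces the choice $g(x,t)=\int_0^x\widetilde\omega(y,t)\,\mathrm{d}y$ in \eqref{3.1.8}. The starting point is the algebraic identity
\begin{equation}\notag
\widetilde\omega^2=\partial_x(g\widetilde\omega)-2\alpha\,\partial_t(g^2),
\end{equation}
which follows from $g_x=\widetilde\omega$ together with $4\alpha g_t=\widetilde\omega_x$: indeed $\partial_x(g\widetilde\omega)=\widetilde\omega^2+g\widetilde\omega_x=\widetilde\omega^2+4\alpha g g_t=\widetilde\omega^2+2\alpha\partial_t(g^2)$. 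Multiplying by $h^2$ and integrating over $\mathbb{R}^+\times(0,T)$ splits $\int_0^T\int_{\mathbb{R}^+}h^2\widetilde\omega^2$ into a spatial-derivative piece and a time-derivative piece, each of which I would treat by integration by parts.

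For the spatial piece, integrating by parts in $x$ the boundary term at $x=0$ carries the factor $g(0,t)=0$ and hence vanishes, while the term at $x=+\infty$ vanishes by the Gaussian decay of $\widetilde\omega$; one is left with $-2\int_0^T\int_{\mathbb{R}^+}h h_x g\widetilde\omega\,\mathrm{d}x\,\mathrm{d}t$. Using $\|g(\cdot,t)\|_{L^\infty}=\tfrac12\sqrt{\pi}\,\alpha^{-1/2}$ and Young's inequality with a weight chosen so that the $h^2\widetilde\omega^2$ part appears with coefficient $\tfrac12$, this term is bounded by $\tfrac12\int_0^T\int_{\mathbb{R}^+}h^2\widetilde\omega^2+\tfrac{\pi}{2\alpha}\int_0^T\|h_x\|^2$. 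For the time piece, $-2\alpha\int_0^T\int_{\mathbb{R}^+}h^2\partial_t(g^2)$, integrating by parts in $t$ the contribution at $t=T$ equals $-2\alpha\int_{\mathbb{R}^+}h^2(T)g^2(T)\le0$ and is discarded, the contribution at $t=0$ is at most $2\alpha\|g(\cdot,0)\|_{L^\infty}^2\|h(0)\|^2=\tfrac{\pi}{2}\|h(0)\|^2$, and the remaining term is exactly $4\alpha\int_0^T\langle h_t,h g^2\rangle_{H^{-1}\times H^1}$. Collecting these, absorbing $\tfrac12\int_0^T\int_{\mathbb{R}^+}h^2\widetilde\omega^2$ into the left-hand side and multiplying by $2$ yields \eqref{3.1.9}; note that halving $\|g\|_{L^\infty}$ relative to the $\mathbb{R}$ case is exactly what turns the constants $4\pi,\,4\pi\alpha^{-1}$ of \eqref{2.2.11} into $\pi,\,\pi\alpha^{-1}$.

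The only genuine point to be careful about is that the computation above is formal under the stated hypotheses $h_x\in L^2(0,T;L^2(\mathbb{R}^+))$, $h_t\in L^2(0,T;H^{-1}(\mathbb{R}^+))$: the integration by parts in $x$, the vanishing at $x=+\infty$, and the interpretation of $\int_{\mathbb{R}^+}h^2\partial_t(g^2)$ and of $2\alpha\int h h_t g^2$ as the duality pairing $\langle h_t,h g^2\rangle_{H^{-1}\times H^1}$ all require justification. I would handle this exactly as in \cite{Huang-Li-Matsumura2010}: first establish the estimate for smooth approximations $h^\varepsilon$ of $h$ that are compactly supported in $x$ — for which every manipulation is legitimate and $h^\varepsilon g^2\in H^1(\mathbb{R}^+)$ since $g$ and $g_x=\widetilde\omega$ are bounded on $[0,T]$, so that $(h^\varepsilon g^2)_x=h^\varepsilon_x g^2+2h^\varepsilon g\widetilde\omega\in L^2$ — and then pass to the limit using continuity of the $H^{-1}\times H^1$ pairing and lower semicontinuity of the $L^2$ norms. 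I expect this approximation and limiting step, rather than any of the inequalities themselves, to be the main (though routine) obstacle; with $g(0,t)=0$ in place, everything else is identical to the $\mathbb{R}$ case. Corollary \ref{Corollary 3.1} then follows from \eqref{3.1.9} in the same way \eqref{2.2.17} follows from \eqref{2.2.11}, by inserting the equation satisfied by $h$ into the pairing term, using Lemma \ref{Lemma 3.1} for the decay of $\bar u,\bar\rho$ and their derivatives, and choosing the smallness parameters $\delta,\eta$ appropriately.
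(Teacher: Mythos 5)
Your proposal is correct and follows exactly the route the paper intends: the paper omits the proof of Lemma \ref{Lemma 3.2}, stating only that it is "similar to the calculation of \eqref{2.2.11}," i.e.\ the whole-line argument of \cite{Huang-Li-Matsumura2010}, which is precisely the identity $\widetilde\omega^2=\partial_x(g\widetilde\omega)-2\alpha\,\partial_t(g^2)$ plus integration by parts that you reconstruct. Your accounting of the boundary term at $x=0$ (killed by $g(0,t)=0$) and of the halved $\|g\|_{L^\infty}$, which converts the constants $4\pi,\,4\pi\alpha^{-1}$ of \eqref{2.2.11} into the $\pi,\,\pi\alpha^{-1}$ of \eqref{3.1.9}, is exactly right.
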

\begin{corollary}\label{Corollary 3.1}
In addition to the condition of Lemma \ref{Lemma 3.2}, assume that $\delta :=|u_+|+|u_-|\ll 1$, $\|h\|_{L^{\infty}(\mathbb{R}^+)} \leq \varepsilon\ll 1$ and $h$ satisfies
\begin{equation}\label{3.1.10}
h_t=
   ah_{xx}
   -\kappa[(h+\bar{u})w_x+h\bar \rho_x]_x,\quad
   h(x,0)=h_{0}(x) \in L^2(\mathbb{R}^+),\quad h_x(+\infty,t)=0,
\end{equation}
where $a$ and $\kappa$ are given positive constant, $\bar{u}$ is the self-similar solutions of \eqref{3.1.1} and $\bar{\rho}=\frac{\mu}{\lambda}\bar{u}$. Then there exists some positive constant $C$ such that
\begin{equation}\label{3.1.11}
\int_0^T\int_{\mathbb {R}^+ }h^2 \widetilde\omega^2\mathrm{d}x \mathrm{d}t
\leq
C\int_0^T(\|h_x(\tau)\|^2+\|w_x(\tau)\|^2)\mathrm{d}\tau
+C\|h_0\|^2.
\end{equation}
\end{corollary}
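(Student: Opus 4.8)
The plan is to mirror the proof of Corollary~\ref{Corollary 2.1}, replacing $\mathbb{R}$ by $\mathbb{R}^+$ throughout and invoking Lemma~\ref{Lemma 3.2} in place of Lemma~\ref{Lemma 2.2}; the only genuinely new ingredient is the analysis of the boundary contributions at $x=0$. First I would compute $\langle h_t, hg^2\rangle_{H^{-1}\times H^1}=\int_{\mathbb{R}^+}h_t\,hg^2\,\mathrm{d}x$ by substituting the equation \eqref{3.1.10} and integrating by parts once. Using $g_x=\widetilde\omega$ from \eqref{3.1.8} one has $(hg^2)_x=h_xg^2+2gh\widetilde\omega$, so the diffusion term yields $-a\int_{\mathbb{R}^+}h_x^2g^2\,\mathrm{d}x-2a\int_{\mathbb{R}^+}h_xgh\widetilde\omega\,\mathrm{d}x$ and the chemotactic flux yields $\kappa\int_{\mathbb{R}^+}[(h+\bar u)w_x+h\bar\rho_x](h_xg^2+2gh\widetilde\omega)\,\mathrm{d}x$, producing exactly the same five terms $\sum_{i=1}^5 I_i$ as in \eqref{2.2.18}.

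The key observation---and the step I expect to be the only real obstacle---is that integration by parts on the half line generates boundary terms $a[h_xhg^2]_0^\infty$ and $-\kappa[((h+\bar u)w_x+h\bar\rho_x)hg^2]_0^\infty$. At $x=0$ these vanish because the antiderivative in \eqref{3.1.8} is anchored at the boundary, $g(x,t)=\int_0^x\widetilde\omega(y,t)\,\mathrm{d}y$, so that $g(0,t)=0$; at $x=+\infty$ they vanish using $h_x(+\infty,t)=0$ together with the boundedness $\|h\|_{L^\infty}\le\varepsilon$, the decay $\bar\rho_x=\frac{\mu}{\lambda}\bar u_x\to 0$, and the function-space hypotheses of Lemma~\ref{Lemma 3.2} (made rigorous by the usual density/cut-off argument, exactly as at $\pm\infty$ in the whole-line case). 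This is precisely why the half-line kernel antiderivative is anchored at $0$ rather than at $-\infty$ as in \eqref{2.2.9}: it is what kills the boundary flux and lets the reduction to $\sum_{i=1}^5 I_i$ go through unchanged.

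Once the boundary terms are disposed of, the remaining estimates are identical to the whole-line case. I would bound $I_1,\dots,I_5$ using the identities behind \eqref{2.2.10} (here $\|g\|_{L^\infty}=\tfrac12\sqrt\pi\,\alpha^{-1/2}$, a harmless constant change), Young's inequality, and the half-line version of \eqref{2.2.19},
\[
\int_{\mathbb{R}^+}h^2\bar\rho_x^2\,\mathrm{d}x\leq C\delta^2\int_{\mathbb{R}^+}h^2\widetilde\omega^2\,\mathrm{d}x,
\]
which follows from $\bar\rho=\frac{\mu}{\lambda}\bar u$ and the Gaussian decay bounds of Lemma~\ref{Lemma 3.1}. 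The small coefficients $\eta$ and $C(\varepsilon+\delta)$ in front of $\int_{\mathbb{R}^+}h^2\widetilde\omega^2\,\mathrm{d}x$ then emerge exactly as in \eqref{2.2.20}, while the leftover $I_i$ contributions are controlled by $\int_{\mathbb{R}^+}(h_x^2+w_x^2)\,\mathrm{d}x$.

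Finally I would insert these bounds into \eqref{3.1.9}, integrate in $t$ over $(0,T)$, and choose $\delta$ and $\eta$ sufficiently small so that the accumulated $\int_0^T\int_{\mathbb{R}^+}h^2\widetilde\omega^2\,\mathrm{d}x\,\mathrm{d}t$ terms on the right are absorbed into the left-hand side. What survives on the right is precisely $C\int_0^T(\|h_x(\tau)\|^2+\|w_x(\tau)\|^2)\,\mathrm{d}\tau+C\|h_0\|^2$, which is \eqref{3.1.11}. Since the whole argument reuses the whole-line computation verbatim apart from the vanishing of the boundary terms, the proof is short, and I would expect to spend essentially all the care on verifying that the $x=0$ boundary contributions are genuinely zero.
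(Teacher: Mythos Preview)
Your proposal is correct and follows exactly the approach the paper intends: the paper itself omits the proof, stating only that it is obtained ``similar to the calculation of \eqref{2.2.11} and \eqref{2.2.17}'', and your write-up supplies precisely those details, including the one point that genuinely needs checking on the half line---that the boundary contributions at $x=0$ vanish because $g(0,t)=\int_0^0\widetilde\omega\,\mathrm{d}y=0$. Nothing further is required.
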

Finally, we show the following lemma (see \cite{{Matsumura-Nishihara1986}}).
\begin{lemma}\label{Lemma 3.3}
If $g(t) \geq 0$, $g(t) \in L^{1}(0,\infty)$ and $g^{\prime}(t) \in L^{1}(0, \infty)$, then $g(t) \rightarrow 0$ as $t \rightarrow \infty$.
\end{lemma}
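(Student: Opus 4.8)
The plan is to prove this in two steps: first establish that $g$ has a finite limit as $t\to\infty$, and then use the integrability of $g$ itself, together with the sign condition $g\ge 0$, to identify that limit as $0$.

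For the first step I would start from the integral representation $g(t)=g(0)+\int_0^t g'(s)\,\mathrm{d}s$, which is legitimate since $g'\in L^1(0,\infty)$ makes $g$ absolutely continuous on every bounded interval. Because the tail $\int_{t_1}^{\infty}|g'(s)|\,\mathrm{d}s$ tends to $0$ as $t_1\to\infty$, for any $t_2>t_1$ we have $|g(t_2)-g(t_1)|\le\int_{t_1}^{\infty}|g'(s)|\,\mathrm{d}s\to 0$; that is, $g$ satisfies the Cauchy criterion at infinity. Hence $L:=\lim_{t\to\infty}g(t)$ exists and is finite, and $L\ge 0$ since $g(t)\ge 0$ for all $t$.

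For the second step I would argue by contradiction. Suppose $L>0$. Then there is $T_0>0$ such that $g(t)\ge L/2$ for all $t\ge T_0$, whence $\int_0^{\infty}g(t)\,\mathrm{d}t\ge\int_{T_0}^{\infty}\tfrac{L}{2}\,\mathrm{d}t=+\infty$, contradicting $g\in L^1(0,\infty)$. Therefore $L=0$, i.e. $g(t)\to 0$ as $t\to\infty$, which is the assertion.

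The argument is entirely elementary and there is no real obstacle; the only point that genuinely requires the hypothesis on $g'$ is the passage from the integrability of $g$ to the \emph{existence} of $\lim_{t\to\infty}g(t)$, since an $L^1$ function on $(0,\infty)$ need not converge to $0$ (or converge at all) without some control on its oscillation. The $L^1$ bound on $g'$ supplies precisely that control through the Cauchy criterion above, after which the sign condition pins the limit down.
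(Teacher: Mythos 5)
Your proof is correct and is the standard argument for this lemma; the paper itself gives no proof, simply citing \cite{Matsumura-Nishihara1986}, and the argument there is exactly the one you give (existence of $\lim_{t\to\infty}g(t)$ via the Cauchy criterion from the integrable tail of $g'$, then identification of the limit as $0$ from $g\ge 0$ and $g\in L^{1}$). The only point worth being careful about is that writing $g(t_2)-g(t_1)=\int_{t_1}^{t_2}g'(s)\,\mathrm{d}s$ tacitly assumes $g$ is (locally) absolutely continuous rather than merely a.e.\ differentiable with integrable derivative; this is harmless here since the lemma is applied to quantities like $\|w_x(t)\|^2$ that are continuously differentiable in $t$.
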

\subsubsection{Proof of Theorem \ref{Thm 3.1}}\label{S3.1.3}
In this subsection, we devote ourselves to the proof of Theorem \ref{Thm 3.1}. It is well known that the global existence can be obtained by the continuation argument based on the local existence of solutions and {\it a priori} estimates. The local existence of \eqref{3.1.2} and \eqref{3.1.3} can be easily derived by using the standard method and its proof is omitted for brevity. In the following, our main effort will be to prove the {\it a priori} estimates of the solution $(w,z)(x,t)$ under the {\it a priori} assumption
\begin{equation}\label{3.1.12}
N(T):=\sup _{0\leq t\leq T}\left(\|w\|_2^2+\|z\|_2^2\right)\leq \varepsilon_0^2,
\end{equation}
where $0<\varepsilon_0\ll 1$.

Also, from \eqref{3.1.2}, $z|_{x=0}=0$ and $w|_{x=0}=0$ give the following boundary conditions:
\begin{equation}\label{3.1.13}
z(0,t)=z_t(0,t)=w(0,t)=w_t(0,t)=0,
\end{equation}
\begin{equation}\label{3.1.14}
|w_{xx}(0,t)|=|(\bar \rho_t-b\bar \rho_{xx})(0,t)|=|-b\bar \rho_{xx}(0,t)|\leq\|-b\bar \rho_{xx}\|_{L^{\infty}}\leq C\delta(1+t)^{-1}.
\end{equation}
With \eqref{3.1.13}-\eqref{3.1.14} in hand, we now turn to prove Theorem \ref{Thm 3.1}, which will be given by the following series of lemmas.
\begin{lemma}\label{Lemma 3.4}
Under the assumptions of Theorem \ref{Thm 3.1}, we have
\begin{equation}\label{3.1.15}
\|w(t)\|^{2}+\|z(t)\|^{2}+\int_{0}^{t}\left(\left\|w_{x}(\tau)\right\|^{2}+\|z_{x}(\tau)\|^{2}+\|(\lambda w-\mu z)(\tau)\|^2\right) \mathrm{d} \tau \leq C\left(\left\|w_{0}\right\|^{2}+\left\|z_{0}\right\|^{2}+\delta\right),
\end{equation}
for $0 \leq t \leq T$.
\end{lemma}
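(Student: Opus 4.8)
The plan is to reproduce, almost line for line, the argument of Lemma~\ref{Lemma 2.3}: the only structural feature that could obstruct the whole-line proof is the appearance of boundary terms at $x=0$ in the energy identities, and these will all be annihilated by the homogeneous conditions $w(0,t)=z(0,t)=0$ (together with the induced $w_t(0,t)=z_t(0,t)=0$) recorded in \eqref{3.1.13}. Concretely, I first multiply $\eqref{3.1.2}_2$ by $\lambda w$ and integrate over $\mathbb{R}^+$; the integration by parts $\int_{\mathbb{R}^+}w_{xx}w\,\mathrm{d}x=-\int_{\mathbb{R}^+}w_x^2\,\mathrm{d}x$ loses its boundary contribution $[w_xw]_0^{+\infty}$ because $w(0,t)=0$ and $w$ decays at $+\infty$. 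Then I multiply $\eqref{3.1.2}_2$ by $-\mu z$, integrate, and add the two identities to manufacture the good dissipative term $\int_{\mathbb{R}^+}(\lambda w-\mu z)^2\,\mathrm{d}x$, exactly as in \eqref{2.3.5}--\eqref{2.3.7}; here the integration by parts $\int_{\mathbb{R}^+}w_{xx}z\,\mathrm{d}x=-\int_{\mathbb{R}^+}w_xz_x\,\mathrm{d}x$ is again boundary-free thanks to $z(0,t)=0$.

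Next I treat $\mu\int_{\mathbb{R}^+}w_tz\,\mathrm{d}x$ by time-integration by parts, $\mu\int_{\mathbb{R}^+}w_tz\,\mathrm{d}x=\frac{\mathrm{d}}{\mathrm{d}t}\mu\int_{\mathbb{R}^+}wz\,\mathrm{d}x-\mu\int_{\mathbb{R}^+}wz_t\,\mathrm{d}x$, and substitute $\eqref{3.1.2}_1$ for $z_t$; the ensuing spatial integrations by parts of $\int_{\mathbb{R}^+}wz_{xx}\,\mathrm{d}x$ and $\int_{\mathbb{R}^+}w[(z+\bar u)w_x+z\bar\rho_x]_x\,\mathrm{d}x$ produce only boundary terms carrying the factor $w(0,t)=0$, so the result is the exact half-line analogue of \eqref{2.3.9}. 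The remaining nonlinear terms $-\kappa\mu\int_{\mathbb{R}^+}(z+\bar u)w_x^2\,\mathrm{d}x$ and $-\kappa\mu\int_{\mathbb{R}^+}w_xz\bar\rho_x\,\mathrm{d}x$ are handled as in \eqref{2.3.10}--\eqref{2.3.11}: the first by $C(\varepsilon_0+\delta)\|w_x\|^2$ using $N(T)\le\varepsilon_0^2$ and Lemma~\ref{Lemma 3.1}, the second by $\eta\|w_x\|^2+C_\eta\delta^2\int_{\mathbb{R}^+}z^2\widetilde\omega^2\,\mathrm{d}x$ after invoking the half-line analogue of \eqref{2.2.19}. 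In parallel I multiply $\eqref{3.1.2}_1$ by $z$ and integrate (again boundary-free, by $z(0,t)=0$) to bound $\frac{\mathrm{d}}{\mathrm{d}t}\tfrac12\|z\|^2+a\|z_x\|^2$ by $C(\varepsilon_0+\delta)\|w_x\|^2+C\delta\int_{\mathbb{R}^+}z^2\widetilde\omega^2\,\mathrm{d}x$, which will supply control of the leftover $\|z_x\|^2$.

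Finally I combine: multiply the $z$-only identity by a large constant $K$, add it to the $(\lambda w,-\mu z)$ identity, use Lemma~\ref{Lemma 3.1} to bound $\int_{\mathbb{R}^+}(\bar\rho_t-b\bar\rho_{xx})^2\,\mathrm{d}x\le C\delta^2(1+t)^{-3/2}\in L^1(0,\infty)$, and note that $\tfrac{\lambda}{2}w^2-\mu wz+\tfrac{K}{2}z^2$ is comparable to $w^2+z^2$ once $K$ is large. Integrating in $t$ over $(0,t)$ and then applying Corollary~\ref{Corollary 3.1} with $h=z$ turns $C\delta\int_0^t\int_{\mathbb{R}^+}z^2\widetilde\omega^2\,\mathrm{d}x\,\mathrm{d}\tau$ into $C\delta\int_0^t(\|z_x\|^2+\|w_x\|^2)\,\mathrm{d}\tau+C\delta\|z_0\|^2$, whose dissipative part is absorbed on the left for $\delta,\varepsilon_0$ small, giving \eqref{3.1.15}. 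The honest assessment of the main obstacle is that at the zero-order level there is essentially none: the Dirichlet data are homogeneous in the perturbation, so every boundary term in the energy identities vanishes, and the genuinely troublesome non-homogeneous traces such as $w_{xx}(0,t)$ in \eqref{3.1.14} do not enter until the higher-order estimates (which is exactly why Theorem~\ref{Thm 3.1} carries no decay rate). The only points needing care are the positive definiteness of the quadratic form $\tfrac{\lambda}{2}w^2-\mu wz+\tfrac{K}{2}z^2$ and the order in which the small factors $(\varepsilon_0+\delta)$ multiplying the dissipation are absorbed.
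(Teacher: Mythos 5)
Your proposal is correct and follows essentially the same route as the paper: the paper's proof of Lemma \ref{Lemma 3.4} likewise transplants the argument of Lemma \ref{Lemma 2.3} to the half line, uses the homogeneous conditions \eqref{3.1.13} to annihilate all boundary terms at this order, invokes the bound $\int_{\mathbb{R}^+}z^2\bar\rho_x^2\,\mathrm{d}x\leq C\delta^2\int_{\mathbb{R}^+}z^2\widetilde\omega^2\,\mathrm{d}x$, and closes by taking $h=z$ in Corollary \ref{Corollary 3.1} after the $K$-weighted combination. Your observation that the non-homogeneous trace $w_{xx}(0,t)$ only intrudes at higher order is exactly consistent with where the paper first uses \eqref{3.1.14}, namely in Lemma \ref{Lemma 3.5}.
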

\begin{proof}
In a similar method as Lemma \ref{Lemma 2.3}, after $\int_{\mathbb{R}^+}\lambda w\times\eqref{3.1.2}_2\mathrm{d}x-\int_{\mathbb{R}^+}\mu z\times\eqref{3.1.2}_2\mathrm{d}x$, applying $\eqref{3.1.2}_1$, the {\it a priori} assumption \eqref{3.1.12} and Lemma \ref{Lemma 3.1}, we can get
\begin{equation}\label{3.1.16}
\begin{split}
&\frac{\rm d}{{\rm d}t}\int_{\mathbb{R}^+} \left(\frac{\lambda w^2}{2}-\mu wz\right) \mathrm{d}x
+\frac{b\lambda}{4}\int_{\mathbb{R}^+}w_x^2\mathrm{d}x
+\frac{1}{2}\int_{\mathbb{R}^+}{(\lambda w-\mu z)^2}\mathrm{d}x\\
\leq& \frac{1}{2}\int_{\mathbb{R}^+} (\bar\rho_t
    -b\bar\rho_{xx})^2\mathrm{d}x
   +C\delta\int_{\mathbb{R}^+} z^2 \widetilde\omega^2\mathrm{d}x+C\int_{\mathbb{R}^+}z_{x}^2\mathrm{d}x.
\end{split}
\end{equation}
Here we have used inequality $\int_{\mathbb R^+}z^2\bar\rho_x^2\mathrm{d}x\leq C\delta^2\int_{\mathbb R^+}z^2\widetilde\omega^2\mathrm{d}x$ and \eqref{3.1.13}.

Next, from $\int_{\mathbb R^+}z\times \eqref{3.1.2}_1\mathrm{d}x$, then similar to the treatment of \eqref{2.3.10} and \eqref{2.3.11}, we have from \eqref{3.1.13} and the {\it a priori} assumption \eqref{3.1.12} that
\begin{equation}\label{3.1.17}
\frac{\rm d}{{\rm d}t} \int_{\mathbb{R}^+}\frac{ z^2}{2}\mathrm{d}x +\frac{a}{2}\int_{\mathbb{R}^+}z_{x}^2\mathrm{d}x
\leq C(\varepsilon_0+\delta)\int_{\mathbb{R}^+}w_x^2\mathrm{d}x+C\delta\int_{\mathbb{R}^+} z^2 \widetilde\omega^2\mathrm{d}x.
\end{equation}
Multiplying \eqref{3.1.17} by a big positive constant $K$ and summing it to \eqref{3.1.16}, we derive
\begin{equation}\label{3.1.18}
\begin{split}
&\frac{\rm d}{{\rm d}t} \int_{\mathbb{R}^+}
   \left(\frac{\lambda w^2}{2} +\frac{K z^2}{2}-\mu wz\right) \mathrm{d}x
   +\frac{b\lambda}{8}\int_{\mathbb{R}^+}w_x^2\mathrm{d}x+\frac{K a}{4}\int_{\mathbb{R}^+}z_x^2\mathrm{d}x
   +\frac{1}{2}\int_{\mathbb{R}^+}{(\lambda w-\mu z)^2}\mathrm{d}x\\
   \leq
&C\delta(1+t)^{-\frac{3}{2}}
    +C\delta\int_{\mathbb{R}^+} z^2 \widetilde\omega^2\mathrm{d}x.
\end{split}
\end{equation}
Integrating the resulting inequality with respect to $t$, then taking $h = z$ in \eqref{3.1.11} leads to \eqref{3.1.15}. The proof of Lemma \ref{Lemma 3.4} is completed.
\end{proof}
\begin{lemma}\label{Lemma 3.5}
Under the assumptions of Theorem \ref{Thm 3.1}, we have
\begin{equation}\label{3.1.19}
\begin{split}
&\|w_{x}(t)\|^{2}+\|z_{x}(t)\|^{2}+\int_{0}^{t}\left(\left\|w_{x x}(\tau)\right\|^{2}+\|z_{x x}(\tau)\|^{2}+\|(\lambda w_x-\mu z_x)(\tau)\|^2\right) \mathrm{d} \tau\\
&\leq C\left(\left\|w_{0}\right\|_{1}^{2}+\left\|z_{0}\right\|_{1}^{2}+\delta\right),
\end{split}
\end{equation}
for $0 \leq t \leq T$.
\end{lemma}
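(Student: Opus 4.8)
The plan is to mimic the first-order estimate of Lemma \ref{Lemma 2.4}, differentiating the reformulated system \eqref{3.1.2} once in $x$ and performing the same ``good-term'' combination that produced $\int(\lambda w-\mu z)^2$ at zero order. First I would differentiate both equations of \eqref{3.1.2} with respect to $x$ to get the analogue of \eqref{2.3.17}. Then I would compute $\int_{\mathbb{R}^+}\lambda w_x\times\partial_x\eqref{3.1.2}_2\,\mathrm{d}x-\int_{\mathbb{R}^+}\mu z_x\times\partial_x\eqref{3.1.2}_2\,\mathrm{d}x$, integrating by parts in $x$ and substituting $\partial_x\eqref{3.1.2}_1$ for $z_{xt}$, exactly as in \eqref{2.3.18}--\eqref{2.3.22}. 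This yields an identity of the form
\begin{equation}\notag
\frac{\mathrm{d}}{\mathrm{d}t}\int_{\mathbb{R}^+}\Big(\tfrac{\lambda w_x^2}{2}-\mu w_xz_x\Big)\mathrm{d}x
+b\lambda\int_{\mathbb{R}^+}w_{xx}^2\mathrm{d}x
+\int_{\mathbb{R}^+}(\lambda w_x-\mu z_x)^2\mathrm{d}x
+\{\text{boundary terms}\}
=\{\text{integral remainders}\},
\end{equation}
where the integral remainders are estimated exactly as $I_6$--$I_9$ in \eqref{2.3.23}--\eqref{2.3.25} using Lemma \ref{Lemma 3.1}, \eqref{3.1.12} and Young's inequality; the term $\int z_x^2\bar\rho_x^2$ is absorbed via the half-line version $\int_{\mathbb{R}^+}z^2\bar\rho_x^2\,\mathrm{d}x\le C\delta^2\int_{\mathbb{R}^+}z^2\widetilde\omega^2\,\mathrm{d}x$ and then Corollary \ref{Corollary 3.1}. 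I would then add the same multiple $K$ of the $z_x$-energy identity obtained from $\int_{\mathbb{R}^+}Kz_x\times\partial_x\eqref{3.1.2}_1\,\mathrm{d}x$, and conclude by integrating in $t$ and invoking \eqref{3.1.11} with $h=z$ together with the zero-order bound \eqref{3.1.15}.

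The main obstacle — and the only genuine difference from the Cauchy case — is the control of the boundary terms at $x=0$ produced by the integrations by parts. In the half-line setting, integrating $\int_{\mathbb{R}^+}w_{xx}(\lambda w-\mu z)_x$ or $\int_{\mathbb{R}^+}z_{xt}w_x$ by parts generates boundary contributions involving $w_x(0,t)$, $z_x(0,t)$, $w_{xx}(0,t)$, and $\bar\rho_{xx}(0,t)$. Here \eqref{3.1.13} gives $w(0,t)=z(0,t)=0$ but says nothing about $w_x(0,t)$ or $z_x(0,t)$, so those boundary terms do not simply vanish; the first derivatives are not controlled pointwise at the boundary. The key is that the bad boundary terms come paired with factors that are either $\bar\rho_{xx}(0,t)$ — controlled by \eqref{3.1.14} as $C\delta(1+t)^{-1}$ — or can be re-expressed, using the PDE restricted to $x=0$ and $z_t(0,t)=w_t(0,t)=0$, in terms of quantities that are already controlled. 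Concretely, from $\eqref{3.1.2}_2$ at $x=0$ we read $-bw_{xx}(0,t)-\mu\cdot 0+\bar\rho_t(0,t)-b\bar\rho_{xx}(0,t)=0$, so $w_{xx}(0,t)=\frac{1}{b}(\bar\rho_t-b\bar\rho_{xx})(0,t)=-\bar\rho_{xx}(0,t)$, matching \eqref{3.1.14}; any boundary term carrying $w_{xx}(0,t)$ is therefore $O(\delta(1+t)^{-1})$, hence integrable in time after the appropriate Young splitting against $w_x(0,t)^2$, and the latter can be handled by a trace/interpolation inequality $|w_x(0,t)|^2\le C\|w_x\|\,\|w_{xx}\|\le\eta\|w_{xx}\|^2+C_\eta\|w_x\|^2$, absorbing $\eta\|w_{xx}\|^2$ into the good dissipation term and leaving $\|w_x\|^2$ to be closed by the zero-order estimate and a Gronwall argument.

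Once the boundary terms are shown to contribute only a sum of a time-integrable forcing $C\delta(1+t)^{-3/2}$ (or better) plus terms absorbable into the left-hand side and into quantities already bounded by Lemma \ref{Lemma 3.4}, the remainder of the argument is formally identical to the proof of Lemma \ref{Lemma 2.4}: choose $\eta$ small to absorb $\eta\|w_{xx}\|^2+\eta\|z_{xx}\|^2$, choose $K$ large so that $\frac{\lambda w_x^2}{2}+\frac{Kz_x^2}{2}-\mu w_xz_x$ is equivalent to $w_x^2+z_x^2$, integrate over $(0,t)$, apply \eqref{3.1.11} with $h=z$, and use \eqref{3.1.15}. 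Note that — consistent with the Remark following Theorem \ref{Thm 3.1} — I do not expect to obtain any decay rate here: the boundary forcing is only $L^1$ in time, not better, so the estimate is uniform-in-time but not decaying, which is exactly the form stated in \eqref{3.1.19}.
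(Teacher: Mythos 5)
Your proposal is correct and follows essentially the same route as the paper: the same $\lambda w_x$/$-\mu z_x$ combination of $\partial_x\eqref{3.1.2}_2$ producing $\int(\lambda w_x-\mu z_x)^2$, the same interior estimates and weighted heat-kernel bound via Corollary \ref{Corollary 3.1}, and the same treatment of the only nontrivial boundary term, namely expressing $w_{xx}(0,t)$ through the equation at $x=0$ as in \eqref{3.1.14} and controlling the first-derivative trace by Sobolev interpolation plus Young's inequality (the paper's \eqref{3.1.21}). The only cosmetic difference is the weighting in the Young splitting, which in the paper keeps a factor $\delta$ on the quadratic terms so they are absorbed into the dissipation rather than closed via the zero-order bound; both variants yield \eqref{3.1.19}.
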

\begin{proof}
In a similar way as Lemma \ref{Lemma 2.4}, from $\int_{\mathbb{R}^+}\lambda w_x\times\partial_x\eqref{3.1.2}_2\mathrm{d}x-\int_{\mathbb{R}^+}\mu z_x\times\partial_x\eqref{3.1.2}_2\mathrm{d}x$, then applying $\partial_x\eqref{3.1.2}_1$, the {\it a priori} assumption \eqref{3.1.12} and Lemma \ref{Lemma 3.1}, we have
\begin{equation}\label{3.1.20}
\begin{split}
&\frac{\rm d}{{\rm d}t}\int_{\mathbb{R}^+} \left(\frac{\lambda w_x^2}{2}-\mu w_xz_x\right) \mathrm{d}x
+\frac{b\lambda}{4}\int_{\mathbb{R}^+}w_{xx}^2\mathrm{d}x
+\frac{1}{2}\int_{\mathbb{R}^+}{(\lambda w_x-\mu z_x)^2}\mathrm{d}x\\
\leq& \frac{1}{2}\int_{\mathbb{R}^+} (\bar\rho_{xt}
    -b\bar\rho_{xxx})^2\mathrm{d}x
   +C(\varepsilon_0+\delta)\int_{\mathbb{R}^+}(w_x^2+z_x^2)\mathrm{d}x
   +C\delta(1+t)^{-1}\int_{\mathbb{R}^+}z^2 \widetilde\omega^2\mathrm{d}x
   +C\int_{\mathbb{R}^+}z_{xx}^2\mathrm{d}x\\
   &+\mu w_x(0,t)z_t(0,t)-bw_{xx}(0,t)(\lambda w_x-\mu z_x)(0,t),
\end{split}
\end{equation}
where we have used inequality $\int_{\mathbb R^+}z^2\bar\rho_{xx}^2\mathrm{d}x\leq C\delta^2(1+t)^{-1}\int_{\mathbb R^+}z^2\widetilde\omega^2\mathrm{d}x$. Since $z_t(0,t)=0$, we only need to estimate the last term on the right-hand side of \eqref{3.1.20}. Employing the Sobolev inequality and \eqref{3.1.14} yields
\begin{equation}\label{3.1.21}
\begin{split}
&-bw_{xx}(0,t)(\lambda w_x-\mu z_x)(0,t)\\
\leq & C\delta(1+t)^{-1}\|(\lambda w_x-\mu z_x)\|_{L^{\infty}}\\
\leq & C\delta(1+t)^{-1}\|(\lambda w_x-\mu z_x)\|^{\frac{1}{2}}\|(\lambda w_{xx}-\mu z_{xx})\|^{\frac{1}{2}}\\
\leq & C\delta(1+t)^{-2}+C\delta\|(\lambda w_x-\mu z_x)\|^2+C\delta\|w_{xx}\|^2+C\delta\|z_{xx}\|^2.
\end{split}
\end{equation}
Substituting \eqref{3.1.21} into \eqref{3.1.20}, one obtains that
\begin{equation}\label{3.1.22}
\begin{split}
&\frac{\rm d}{{\rm d}t}\int_{\mathbb{R}^+} \left(\frac{\lambda w_x^2}{2}-\mu w_xz_x\right) \mathrm{d}x
+\frac{b\lambda}{8}\int_{\mathbb{R}^+}w_{xx}^2\mathrm{d}x
+\frac{1}{4}\int_{\mathbb{R}^+}{(\lambda w_x-\mu z_x)^2}\mathrm{d}x\\
\leq& C\delta(1+t)^{-2}
   +C(\varepsilon_0+\delta)\int_{\mathbb{R}^+}(w_x^2+z_x^2)\mathrm{d}x
   +C\delta(1+t)^{-1}\int_{\mathbb{R}^+}z^2 \widetilde\omega^2\mathrm{d}x+C\int_{\mathbb{R}^+}z_{xx}^2\mathrm{d}x.
\end{split}
\end{equation}
Next, multiplying $\partial_x\eqref{3.1.2}_1$ by $K z_{x}$ ($K$ is sufficiently large) and integrating it with respect to $x$ over $\mathbb{R}^+$, similar to the treatment of $\eqref{2.3.22}$, together with the {\it a priori} assumption \eqref{3.1.12}, we have
\begin{equation}\label{3.1.23}
\begin{split}
\frac{\rm d}{{\rm d}t} \int_{\mathbb{R}^+}\frac{ K z_x^2}{2}\mathrm{d}x +\frac{K a}{2}\int_{\mathbb{R}^+}z_{xx}^2\mathrm{d}x
\leq& C(\varepsilon_0+\delta)\int_{\mathbb{R}^+}w_{xx}^2\mathrm{d}x+C(\varepsilon_0+\delta)\int_{\mathbb{R}^+}(w_x^2+z_x^2)\mathrm{d}x\\
   &+C\delta(1+t)^{-1}\int_{\mathbb{R}^+}z^2 \widetilde\omega^2\mathrm{d}x,
\end{split}
\end{equation}
where the boundary term $Kz_x(0,t)\{-az_{xx}+\kappa[(z+\bar u)w_x+z\bar\rho_x]_x\}(0,t)=-Kz_x(0,t)z_t(0,t)=0$ due to \eqref{3.1.13}.
Thus, combining \eqref{3.1.22} and \eqref{3.1.23}, we get
\begin{equation}\label{3.1.24}
\begin{split}
&\frac{\rm d}{{\rm d}t}\int_{\mathbb{R}^+} \left(\frac{\lambda w_x^2}{2}+\frac{K z_x^2}{2}-\mu w_xz_x\right) \mathrm{d}x
+\frac{b\lambda}{16}\int_{\mathbb{R}^+}w_{xx}^2\mathrm{d}x+\frac{K a}{4}\int_{\mathbb{R}^+}z_{xx}^2\mathrm{d}x
+\frac{1}{4}\int_{\mathbb{R}^+}{(\lambda w_x-\mu z_x)^2}\mathrm{d}x\\
\leq& C\delta(1+t)^{-2}
   +C(\varepsilon_0+\delta)\int_{\mathbb{R}^+}(w_x^2+z_x^2)\mathrm{d}x
   +C\delta(1+t)^{-1}\int_{\mathbb{R}^+}z^2 \widetilde\omega^2\mathrm{d}x.
\end{split}
\end{equation}
Integrating \eqref{3.1.24} with respect to $t$ and taking $h = z$ in \eqref{3.1.11}, together with \eqref{3.1.15}, we get the desired inequality \eqref{3.1.19}. The proof of Lemma \ref{Lemma 3.5} is completed.
\end{proof}
\begin{lemma}\label{Lemma 3.6}
Under the assumptions of Theorem \ref{Thm 3.1}, we have
\begin{equation}\label{3.1.25}
\begin{split}
&(\|w_{t}(t)\|^{2}+\|z_{t}(t)\|^{2})+\int_{0}^{t}\left(\left\|w_{xt}(\tau)\right\|^{2}+\|z_{xt}(\tau)\|^{2}+\|(\lambda w_{t}-\mu z_{t})(\tau)\|^2\right) \mathrm{d} \tau\\
\leq& C\left(\left\|w_{0}\right\|_{2}^{2}+\left\|z_{0}\right\|_{2}^{2}+\delta\right),
\end{split}
\end{equation}
for $0 \leq t \leq T$.
\end{lemma}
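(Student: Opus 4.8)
The plan is to adapt the argument of Lemma \ref{Lemma 2.6}, replacing the third-order spatial energy estimates of the Cauchy problem by a time-derivative estimate; this substitution is dictated by the boundary relation \eqref{3.1.13}, which gives $w_t(0,t)=z_t(0,t)=0$, so that every boundary term produced when integrating by parts in the $t$-differentiated system vanishes (unlike a third-order spatial energy, which would generate uncontrolled traces $w_{xxx}(0,t),z_{xxx}(0,t)$). First I would differentiate \eqref{3.1.2} in $t$, form the combination $\int_{\mathbb{R}^+}\lambda w_t\times\partial_t\eqref{3.1.2}_2\,\mathrm{d}x-\int_{\mathbb{R}^+}\mu z_t\times\partial_t\eqref{3.1.2}_2\,\mathrm{d}x$, rewrite the cross term $\mu\int_{\mathbb{R}^+}w_{tt}z_t\,\mathrm{d}x$ by means of $\partial_t\eqref{3.1.2}_1$, and integrate by parts exactly as in the derivation of \eqref{2.3.42}. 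Since $w_t(0,t)=0$ kills the boundary contributions from the $-bw_{xxt}$, $az_{xxt}$ and $\kappa[\,\cdot\,]_{xt}$ pieces, this yields
\[
\frac{\mathrm{d}}{\mathrm{d}t}\!\int_{\mathbb{R}^+}\!\!\Big(\tfrac{\lambda w_t^2}{2}-\mu w_tz_t\Big)\mathrm{d}x+b\lambda\|w_{xt}\|^2+\|\lambda w_t-\mu z_t\|^2+\!\int_{\mathbb{R}^+}\!\!(\lambda w_t-\mu z_t)(\bar\rho_{tt}-b\bar\rho_{xxt})\,\mathrm{d}x=(a+b)\mu\!\int_{\mathbb{R}^+}\!\!w_{xt}z_{xt}\,\mathrm{d}x+\sum_{i=18}^{22}I_i ,
\]
with $I_{18},\dots,I_{22}$ the same five nonlinear terms as in \eqref{2.3.42}.

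Next I would estimate $I_{18},\dots,I_{22}$ by Young's inequality together with Lemma \ref{Lemma 3.1} (for $\bar u_t,\bar\rho_x,\bar\rho_{xt}$), the Sobolev bound $\|w_x\|_{L^\infty}+\|z_x\|_{L^\infty}\le C\varepsilon_0$ coming from \eqref{3.1.12}, and the weighted inequality $\int_{\mathbb{R}^+}z^2\bar\rho_{xt}^2\,\mathrm{d}x\le C\delta^2(1+t)^{-2}\int_{\mathbb{R}^+}z^2\widetilde\omega^2\,\mathrm{d}x$ in the spirit of \eqref{2.3.44}; the only term carrying no smallness, $-\kappa\mu\int_{\mathbb{R}^+}(z+\bar u)w_{xt}^2\,\mathrm{d}x$, is absorbed into $b\lambda\|w_{xt}\|^2$. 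In parallel, multiplying $\partial_t\eqref{3.1.2}_1$ by $Kz_t$ with $K>\mu^2/\lambda$ large and integrating over $\mathbb{R}^+$ (again all boundary terms vanish, now because $z_t(0,t)=0$) controls $\|z_{xt}\|^2$; summing the two inequalities, absorbing the cross term $(a+b)\mu\int_{\mathbb{R}^+}w_{xt}z_{xt}\,\mathrm{d}x$ by Young's inequality, and using that the resulting functional is equivalent to $\|w_t\|^2+\|z_t\|^2$, one obtains, for a suitable $c>0$,
\[
\frac{\mathrm{d}}{\mathrm{d}t}\!\int_{\mathbb{R}^+}\!\!\Big(\tfrac{\lambda w_t^2}{2}+\tfrac{Kz_t^2}{2}-\mu w_tz_t\Big)\mathrm{d}x+c\big(\|w_{xt}\|^2+\|z_{xt}\|^2+\|\lambda w_t-\mu z_t\|^2\big)\le C\delta(1+t)^{-\frac{7}{2}}+C\delta\|w_x\|^2+C\delta\!\int_{\mathbb{R}^+}\!\!z^2\widetilde\omega^2\,\mathrm{d}x+C(\varepsilon_0+\delta)\|z_t\|^2 ,
\]
where the $(1+t)^{-7/2}$ comes from Lemma \ref{Lemma 3.1} applied to $\bar\rho_{tt}-b\bar\rho_{xxt}$.

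To close the estimate I would eliminate the last term using the structure of $\eqref{3.1.2}_1$: writing $z_t=az_{xx}-\kappa[(z+\bar u)w_x+z\bar\rho_x]_x$ and applying Lemma \ref{Lemma 3.1} and \eqref{3.1.12} gives the analogue of \eqref{2.3.49},
\[
\|z_t\|^2\le C\big(\|w_{xx}\|^2+\|z_{xx}\|^2\big)+C\big(\|w_x\|^2+\|z_x\|^2\big)+C\delta\!\int_{\mathbb{R}^+}\!\!z^2\widetilde\omega^2\,\mathrm{d}x .
\]
Substituting this into the previous inequality, integrating over $(0,t)$, and invoking Lemma \ref{Lemma 3.4} and Lemma \ref{Lemma 3.5} (which bound $\int_0^t(\|w_x\|^2+\|z_x\|^2+\|w_{xx}\|^2+\|z_{xx}\|^2)\,\mathrm{d}\tau$), Corollary \ref{Corollary 3.1} with $h=z$ (which bounds $\int_0^t\!\int_{\mathbb{R}^+}z^2\widetilde\omega^2\,\mathrm{d}x\,\mathrm{d}\tau$), the time-integrability of $(1+t)^{-7/2}$, and the bound $\|w_t(0)\|^2+\|z_t(0)\|^2\le C(\|w_0\|_2^2+\|z_0\|_2^2+\delta)$ read off from \eqref{3.1.2} at $t=0$, one arrives at \eqref{3.1.25}.

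The step that genuinely requires care is the boundary bookkeeping in the first identity: one must verify that none of the $t$-integrations by parts leaves a surviving trace at $x=0$ — in particular that no analogue of the awkward term $-bw_{xx}(0,t)(\lambda w_x-\mu z_x)(0,t)$ of \eqref{3.1.20} arises — which is precisely why the time derivative, rather than a third spatial derivative, is the right quantity to estimate here; the pointwise-in-time bounds on $\|w_{xx}\|$ and $\|z_{xx}\|$ are then recovered algebraically from \eqref{3.1.2} in the subsequent lemma.
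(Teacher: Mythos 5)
Your proposal follows essentially the same route as the paper's own proof: the combination $\int_{\mathbb{R}^+}\lambda w_t\,\partial_t\eqref{3.1.2}_2\,\mathrm{d}x-\int_{\mathbb{R}^+}\mu z_t\,\partial_t\eqref{3.1.2}_2\,\mathrm{d}x$ with the cross term rewritten via $\partial_t\eqref{3.1.2}_1$, the vanishing of all boundary traces thanks to $w_t(0,t)=z_t(0,t)=0$ from \eqref{3.1.13}, the auxiliary $Kz_t$ multiplier, the elimination of $\|z_t\|^2$ through the equation as in \eqref{3.1.29}, and the final integration using Lemmas \ref{Lemma 3.4}--\ref{Lemma 3.5} and Corollary \ref{Corollary 3.1} are exactly the steps of \eqref{3.1.26}--\eqref{3.1.30}. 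The argument is correct as written; no substantive differences from the paper.
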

\begin{proof}
In a similar way as Lemma \ref{Lemma 2.6}, from $\int_{\mathbb{R}^+}\lambda w_t\times\partial_t\eqref{3.1.2}_2\mathrm{d}x-\int_{\mathbb{R}^+}\mu z_t\times\partial_t\eqref{3.1.2}_2\mathrm{d}x$, then applying $\partial_t\eqref{3.1.2}_1$, the {\it a priori} assumption \eqref{3.1.12} and Lemma \ref{Lemma 3.1}, together with \eqref{3.1.13}, we have
\begin{equation}\label{3.1.26}
\begin{split}
&\frac{\rm d}{{\rm d}t} \int_{\mathbb{R}^+}
   \left(\frac{\lambda w_{t}^2}{2}-\mu w_{t}z_{t}\right)\mathrm{d}x
   +\frac{b\lambda}{4}\int_{\mathbb{R}^+}w_{xt}^2\mathrm{d}x
   +\frac{1}{2}\int_{\mathbb{R}^+} (\lambda w_{t}-\mu z_{t})^2\mathrm{d}x\\
\leq&\frac{1}{2}\int_{\mathbb{R}^+} (\bar\rho_{tt}
    -b\bar\rho_{xxt})^2\mathrm{d}x
+C(\varepsilon_0+\delta)\int_{\mathbb{R}^+}z_t^2\mathrm{d}x+C\delta(1+t)^{-2}\int_{\mathbb{R}^+}w_x^2\mathrm{d}x\\
&+C\delta(1+t)^{-2}\int_{\mathbb{R}^+}z^2\widetilde\omega^2\mathrm{d}x+C\int_{\mathbb{R}^+}z_{xt}^2\mathrm{d}x.
\end{split}
\end{equation}
Next, multiplying $\partial_t\eqref{3.1.2}_1$ by $K z_{t}$ ($K$ is sufficiently large) and integrating it with respect to $x$ over $\mathbb{R}^+$, similar to the treatment of \eqref{2.3.42}, together with the {\it a priori} assumption \eqref{3.1.12} and \eqref{3.1.13}, we have
\begin{equation}\label{3.1.27}
\begin{split}
\frac{\rm d}{{\rm d}t} \int_{\mathbb{R}^+}\frac{ K z_{t}^2}{2}\mathrm{d}x +\frac{K a}{2}\int_{\mathbb{R}^+}z_{xt}^2\mathrm{d}x
\leq& C(\varepsilon_0+\delta)\int_{\mathbb{R}^+}w_{xt}^2\mathrm{d}x+C(\varepsilon_0+\delta)\int_{\mathbb{R}^+}z_{t}^2\mathrm{d}x\\
   &+C\delta(1+t)^{-2}\int_{\mathbb{R}^+}w_{x}^2\mathrm{d}x+C\delta(1+t)^{-2}\int_{\mathbb{R}^+}z^2 \widetilde\omega^2\mathrm{d}x.
\end{split}
\end{equation}
Combining \eqref{3.1.26} and \eqref{3.1.27}, we derive
\begin{equation}\label{3.1.28}
\begin{split}
&\frac{\rm d}{{\rm d}t} \int_{\mathbb{R}^+}
   \left(\frac{\lambda{w_{t}}^2}{2} +\frac{K z_{t}^2}{2}-\mu w_{t}z_{t}\right) \mathrm{d}x
   +\frac{b\lambda}{8}\int_{\mathbb{R}^+}w_{xt}^2\mathrm{d}x+\frac{K a}{4}\int_{\mathbb{R}^+}z_{xt}^2\mathrm{d}x+\frac{1}{2}\int_{\mathbb{R}^+}{(\lambda w_{t}-\mu z_{t})^2}\mathrm{d}x\\
\leq&C\delta(1+t)^{-\frac{7}{2}}
    +C\delta(1+t)^{-2}\int_{\mathbb{R}^+}w_{x}^2\mathrm{d}x+C\delta(1+t)^{-2}\int_{\mathbb{R}^+} z^2 \widetilde\omega^2\mathrm{d}x+C(\varepsilon_0+\delta)\int_{\mathbb{R}^+}z_{t}^2\mathrm{d}x.
\end{split}
\end{equation}
Then by applying the equation $\eqref{3.1.2}_1$, Lemma \ref{Lemma 3.1} and the {\it a priori} assumption \eqref{3.1.12}, we get
\begin{equation}\label{3.1.29}
\int_{\mathbb{R}^+}z_t^2\mathrm{d}x
\leq C\int_{\mathbb{R}^+}(w_{xx}^2+z_{xx}^2)\mathrm{d}x+C\int_{\mathbb{R}^+}(w_{x}^2+z_{x}^2)\mathrm{d}x+C\delta(1+t)^{-1}\int_{\mathbb{R}^+}z^2\widetilde\omega^2\mathrm{d}x.
\end{equation}
 Plugging \eqref{3.1.29} into \eqref{3.1.28}, it is easy to derive that
\begin{equation}\label{3.1.30}
\begin{split}
&\frac{\rm d}{{\rm d}t} \int_{\mathbb{R}^+}
   \left(\frac{\lambda{w_{t}}^2}{2} +\frac{K z_{t}^2}{2}-\mu w_{t}z_{t}\right) \mathrm{d}x
   +\frac{b\lambda}{8}\int_{\mathbb{R}^+}w_{xt}^2\mathrm{d}x+\frac{K a}{4}\int_{\mathbb{R}^+}z_{xt}^2\mathrm{d}x+\frac{1}{2}\int_{\mathbb{R}^+}{(\lambda w_{t}-\mu z_{t})^2}\mathrm{d}x\\
\leq&C\delta(1+t)^{-\frac{7}{2}}
    +C\int_{\mathbb{R}^+}(w_{xx}^2+z_{xx}^2)\mathrm{d}x+C\int_{\mathbb{R}^+}(w_{x}^2+z_{x}^2)\mathrm{d}x+C\delta(1+t)^{-1}\int_{\mathbb{R}^+}z^2\widetilde\omega^2\mathrm{d}x.
\end{split}
\end{equation}
Integrating the resulting inequality with respect to $t$, then taking $h = z$ in \eqref{3.1.11}, together with \eqref{3.1.15} and \eqref{3.1.19}, we can immediately obtain \eqref{3.1.25}. The proof of Lemma \ref{Lemma 3.6} is completed.
\end{proof}
\begin{lemma}\label{Lemma 3.7}
Under the assumptions of Theorem \ref{Thm 3.1}, we have
\begin{equation}\label{3.1.31}
\|w_{xx}(t)\|^{2}+\|z_{xx}(t)\|^{2}
\leq C\left(\left\|w_{0}\right\|_{2}^{2}+\left\|z_{0}\right\|_{2}^{2}+\delta\right),
\end{equation}
for $0 \leq t \leq T$.
\end{lemma}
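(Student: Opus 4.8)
The plan is to establish \eqref{3.1.31} \emph{without} differentiating the system \eqref{3.1.2} twice in $x$. An $H^2$-type energy estimate obtained that way would, after integration by parts, produce boundary contributions involving $w_{xxx}(0,t)$ and $z_{xxx}(0,t)$, which are not controlled by the boundary data \eqref{3.1.13}; this is the obstruction alluded to after Lemma \ref{Lemma 3.6}. Instead I would exploit the structure of \eqref{3.1.2} to read off $w_{xx}$ and $z_{xx}$ algebraically in terms of quantities already bounded in Lemmas \ref{Lemma 3.4}--\ref{Lemma 3.6} and Lemma \ref{Lemma 3.1}.

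First, solving $\eqref{3.1.2}_2$ for $w_{xx}$ gives
\[
b\,w_{xx}=w_t+\lambda w-\mu z+\bar\rho_t-b\bar\rho_{xx},
\]
so taking $L^2(\mathbb{R}^+)$ norms and using $\|\bar\rho_t\|+\|\bar\rho_{xx}\|\le C\delta(1+t)^{-3/4}$ from Lemma \ref{Lemma 3.1} yields $\|w_{xx}(t)\|^2\le C(\|w_t\|^2+\|w\|^2+\|z\|^2+\delta^2)$, whose right-hand side is already majorised by $C(\|w_0\|_2^2+\|z_0\|_2^2+\delta)$ via Lemmas \ref{Lemma 3.4} and \ref{Lemma 3.6}. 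Next, expanding $\eqref{3.1.2}_1$,
\[
a\,z_{xx}=z_t+\kappa(z_x+\bar u_x)w_x+\kappa(z+\bar u)w_{xx}+\kappa z_x\bar\rho_x+\kappa z\bar\rho_{xx},
\]
and substituting the expression for $w_{xx}$ just obtained expresses $z_{xx}$ entirely through controlled terms. In $L^2(\mathbb{R}^+)$ the term $z_t$ is handled by Lemma \ref{Lemma 3.6}; $(z+\bar u)w_{xx}$ by $\|z+\bar u\|_{L^\infty}\|w_{xx}\|\le C\|w_{xx}\|$ together with the previous step; $(z_x+\bar u_x)w_x$ by $(\|z_x\|_{L^\infty}+\|\bar u_x\|_{L^\infty})\|w_x\|\le C(\varepsilon_0+\delta)\|w_x\|$; and $z_x\bar\rho_x,\ z\bar\rho_{xx}$ by $\|\bar\rho_x\|_{L^\infty}\|z_x\|$ and $\|\bar\rho_{xx}\|_{L^\infty}\|z\|$, which decay by Lemma \ref{Lemma 3.1}. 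Feeding in Lemmas \ref{Lemma 3.4} and \ref{Lemma 3.5} then closes the estimate for $\|z_{xx}(t)\|^2$.

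The only points needing care, and the places where the {\it a priori} assumption \eqref{3.1.12} actually enters, are the nonlinear products: the $L^\infty$ bound $\|z_x\|_{L^\infty}^2\le C\|z_x\|_1^2\le C\varepsilon_0^2$ must be taken from \eqref{3.1.12} rather than from a not-yet-available pointwise bound on $\|z_{xx}(t)\|$, and the diffusion-wave factors $\bar u_x,\bar\rho_x,\bar\rho_{xx}$ are controlled through the Gaussian-type decay of $\phi$ and its derivatives (Lemma \ref{Lemma 3.1}, compare \eqref{2.2.7}). No time weight is available in this argument, which is precisely why Theorem \ref{Thm 3.1} carries no decay rate; the payoff is that no troublesome boundary term appears, so the {\it a priori} assumption \eqref{3.1.12} is closed and global existence follows by the usual continuation argument. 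I expect the main (though modest) obstacle to be the bookkeeping of the substitution of $w_{xx}$ into the $z_{xx}$ identity, so that every resulting term is visibly dominated by the outputs of Lemmas \ref{Lemma 3.4}--\ref{Lemma 3.6} and Lemma \ref{Lemma 3.1}.
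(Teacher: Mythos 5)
Your proposal is correct and is essentially the paper's own argument: the paper likewise avoids a second $x$-differentiation and instead reads $w_{xx}$ off algebraically from $\eqref{3.1.2}_2$ (giving $\|w_{xx}\|^2\leq C\int(w_t^2+w^2+z^2)\,\mathrm{d}x+C\delta$, closed by Lemmas \ref{Lemma 3.4} and \ref{Lemma 3.6}) and then $z_{xx}$ from $\eqref{3.1.2}_1$ using the bound on $\|w_{xx}\|$ just obtained together with Lemmas \ref{Lemma 3.4}, \ref{Lemma 3.5} and \ref{Lemma 3.1}. The details you flag (the $L^\infty$ bounds from the \emph{a priori} assumption and the decay of the diffusion-wave derivatives) are exactly the ingredients the paper uses.
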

\begin{proof}
From the equation $\eqref{3.1.2}_2$ and Lemma \ref{Lemma 3.1}, it is easy to obtain that
\begin{equation}\label{3.1.32}
\int_{\mathbb{R}^+}w_{xx}^2\mathrm{d}x\leq C \int_{\mathbb{R}^+}(w_t^2+w^2+z^2)\mathrm{d}x+C\delta.
\end{equation}
By using \eqref{3.1.15} and \eqref{3.1.25}, we can reach
\begin{equation}\label{3.1.33}
\|w_{xx}\|^2\leq C(\|w_{0}\|_{2}^{2}+\|z_{0}\|_{2}^{2}+\delta).
\end{equation}
Similarly, by using the equation $\eqref{3.1.2}_1$, we also have
\begin{equation}\label{3.1.34}
\int_{\mathbb{R}^+}z_{xx}^2\mathrm{d}x\leq C \int_{\mathbb{R}^+}(z_t^2+z^2+z_x^2+w_x^2+w_{xx}^2)\mathrm{d}x.
\end{equation}
Applying \eqref{3.1.15}, \eqref{3.1.19}, \eqref{3.1.25} and \eqref{3.1.33}, it holds that
\begin{equation}\label{3.1.35}
\|z_{xx}\|^2\leq C(\|w_{0}\|_{2}^{2}+\|z_{0}\|_{2}^{2}+\delta).
\end{equation}
Combining \eqref{3.1.33} and \eqref{3.1.35}, we obtain \eqref{3.1.31}. The proof of Lemma \ref{Lemma 3.7} is completed.
\end{proof}
From Lemmas \ref{Lemma 3.4}-\ref{Lemma 3.7}, one can get the desired inequality \eqref{3.1.4}. Therefore, applying the local existence result and the continuity argument, one can extend the local solution for problem \eqref{3.1.2}-\eqref{3.1.3} globally.

Finally, we have to show the desired large time behavior in Theorem \ref{Thm 3.1}.
Multiplying $\partial_x\eqref{3.1.2}_2$ by $w_x$, then using Lemma \ref{Lemma 3.1} and \eqref{3.1.14}, we have
\begin{equation}\label{3.1.36}
\begin{split}
\left|\frac{\mathrm{d}}{\mathrm{d} t}\left\|w_{x}(t)\right\|^{2}\right|
&\leq
C(\left\|w_{x}(t)\right\|^{2}+\left\|w_{x x}(t)\right\|^{2}+\left\|z_{x}(t)\right\|^{2})+C\delta^2(1+t)^{-\frac{5}{2}}+C\delta(1+t)^{-1}\|w_x(t)\|_{L^\infty}\\
&\leq C\delta(1+t)^{-2}+C(\left\|w_{x}(t)\right\|^{2}+\left\|w_{x x}(t)\right\|^{2}+\left\|z_{x}(t)\right\|^{2}).
\end{split}
\end{equation}
Hence, using the estimate \eqref{3.1.4} implies that
\begin{equation}\label{3.1.37}
\int_{0}^{\infty}\left(\left\|w_{x}(t)\right\|^{2}+\left|\frac{\mathrm{d}}{\mathrm{d} t}\left\|w_{x}(t)\right\|^{2}\right|\right) \mathrm{d} t<\infty.
\end{equation}
From Lemma \ref{Lemma 3.3}, we can derive
\begin{equation}\label{3.1.38}
\lim _{t \rightarrow+\infty}\left\|w_{x}(\cdot,t)\right\|=0.
\end{equation}
Similarly, it is easy to obtain that
\begin{equation}\label{3.1.39}
\int_{0}^{\infty}\left(\left\|z_{x}(t)\right\|^{2}+\left|\frac{\mathrm{d}}{\mathrm{d} t}\left\|z_{x}(t)\right\|^{2}\right|\right) \mathrm{d} t<\infty,
\end{equation}
then it follows that
\begin{equation}\label{3.1.40}
\lim _{t \rightarrow+\infty}\left\|z_{x}(\cdot,t)\right\|=0.
\end{equation}
Applying the Sobolev inequality, \eqref{3.1.38} and \eqref{3.1.40} easily leads to the large-time behavior \eqref{3.1.5} of the solution. This ends the proof of Theorem \ref{Thm 3.1}.
\subsection{The case of Neumann boundary condition}\label{S3.2}
\subsubsection{Reformulation of the problem and theorem}\label{S3.2.1}
We now turn to the problem \eqref{1.2} and \eqref{1.6} with the Neumann boundary condition \eqref{1.8}. As in the preceding subsection we first reformulate the problem \eqref{1.2} and \eqref{1.6}. Assume that the steady state of the  one-dimensional Keller-Segel model \eqref{1.2} is trivial, taking the form of
\begin{equation}\label{3.2.1}
u=u_+,\quad \rho=\rho_+,
\end{equation}
provided that
\begin{equation}\label{3.2.2}
\rho_+=\frac{\mu}{\lambda}u_+.
\end{equation}
Let $z=u-u_+$, $w=\rho-\rho_+$. Then $(w,z)$ satisfies
\begin{equation}\label{3.2.3}
\left\{\begin{array}{l}
z_t
   -az_{xx}
   +\kappa[(z+u_+)w_x]_x=0,\\[2mm]
w_t
    -bw_{xx}
    +\lambda w
    -\mu z=0,\\[2mm]
 \end{array}
        \right.
\end{equation}
with the initial data
\begin{equation}\label{3.2.4}
\left(w,z)\right|_{t=0}=\left(w_{0},z_{0}\right)(x) \rightarrow 0\quad
 \text {as}
  \quad
  x\rightarrow \infty,
\end{equation}
and the boundary condition
\begin{equation}\label{3.2.5}
\left(w_x,z_x)\right|_{x=0}=(0,0).
\end{equation}
\begin{theorem}\label{Thm 3.2} {\bf (Neumann boundary).}
Suppose that both $\delta_0:=\left|u_{+}\right|$ and $\left\|w_{0}\right\|_{2}+\left\|z_{0}\right\|_{2}$ are sufficiently small. Then there exists a unique time-global solution $(w,z)(x, t)$ of the initial-boundary value problem \eqref{3.2.3}-\eqref{3.2.5}, which satisfies
\begin{equation}\notag
w \in C^{i, \infty}\left([0, \infty) ; H^{2-i}\right), \quad i=0,1,2,~~~
z \in C^{i, \infty}\left([0, \infty) ; H^{2-i}\right), \quad i=0,1,2,	
\end{equation}
and
\begin{equation}\label{3.2.6}
\begin{split}
&\sum_{k=0}^{2}(1+t)^{k}\left(\|\partial_{x}^{k} w(t)\|^{2}+\|\partial_{x}^{k} z(t)\|^{2}\right)
  +(1+t)^{2}\left(\|w_t(t)\|^{2}+\|z_t(t)\|^{2}\right)
   \\
   &
    +
    \int_{0}^{t}\bigg[\sum_{j=0}^{2}(1+\tau)^{j}\left(\|\partial_{x}^{j+1} w(\tau)\|^{2}+\|\partial_{x}^{j+1} z(\tau)\|^{2}+\|\partial_x^j(\lambda w-\mu z)(\tau)\|^2\right)\\
    &+(1+\tau)^{2}\left(\|w_{xt}(\tau)\|^{2}+\|z_{xt}(\tau)\|^{2}+\|(\lambda w_t-\mu z_t)(\tau)\|^2\right)\bigg]{\rm d} \tau\\
     \leq&
       C\left(\|w_{0}\|_{2}^{2}+\|z_{0}\|_{2}^{2}\right).
\end{split}
\end{equation}
\end{theorem}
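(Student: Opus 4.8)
The plan is to follow the proof of Theorem~\ref{Thm 2.1} in Section~\ref{S2} almost verbatim, exploiting the fact that here the reference state $(u_+,\rho_+)$ is a genuine constant. Since $\bar u\equiv u_+$ and $\bar\rho\equiv\rho_+$, the ``diffusion-wave forcing'' $\bar\rho_t-b\bar\rho_{xx}$ disappears from $\eqref{3.2.3}_2$ and the factor $\bar\rho_x$ disappears from the chemotactic term of $\eqref{3.2.3}_1$; in particular the weighted-heat-kernel machinery of Lemma~\ref{Lemma 2.2} and Corollary~\ref{Corollary 2.1} (used in Section~\ref{S2} only to tame $\int z^2\bar\rho_x^2$) is not needed, and the whole analysis is strictly simpler than for the Cauchy problem. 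By the standard local existence theory together with a continuation argument, it suffices to close the estimate \eqref{3.2.6} under the a priori assumption $N(T):=\sup_{0\le t\le T}\sum_{k=0}^2(1+t)^k(\|\partial_x^kw(t)\|^2+\|\partial_x^kz(t)\|^2)\le\varepsilon_0^2$ with $0<\varepsilon_0\ll1$, exactly as in \eqref{2.3.1}; this gives the $L^\infty$ bounds \eqref{2.3.2}--\eqref{2.3.3}.

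For the zero-order estimate I would carry out $\int_{\mathbb{R}^+}\lambda w\times\eqref{3.2.3}_2\,\mathrm{d}x-\int_{\mathbb{R}^+}\mu z\times\eqref{3.2.3}_2\,\mathrm{d}x$; the quadratic terms recombine into the good damping term $\int_{\mathbb{R}^+}(\lambda w-\mu z)^2\,\mathrm{d}x$ as in \eqref{2.3.5}--\eqref{2.3.7}. The coupling term $\mu\int_{\mathbb{R}^+}w_tz\,\mathrm{d}x$ is then rewritten using $\eqref{3.2.3}_1$ and integration by parts; every boundary contribution produced this way carries a factor $w_x(0,t)$ or $z_x(0,t)$ and so vanishes by \eqref{3.2.5}. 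Adding $K$ times the identity from $\int_{\mathbb{R}^+}z\times\eqref{3.2.3}_1\,\mathrm{d}x$ ($K$ a large fixed constant) and using the smallness of $\varepsilon_0$ and of $\delta_0=|u_+|$ to absorb the nonlinear terms and the term $\kappa u_+\int_{\mathbb{R}^+}w_xz_x\,\mathrm{d}x$ into the dissipation, one obtains the $k=0$ part of \eqref{3.2.6}; note there is no additive $\delta$ on the right-hand side, since there is no forcing.

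The first- and second-order spatial estimates and the time-derivative estimate are obtained by differentiating \eqref{3.2.3} once in $x$, twice in $x$, and once in $t$, respectively, and repeating exactly the manipulations of Lemmas~\ref{Lemma 2.4}--\ref{Lemma 2.6}. The decay weights are produced, as there, by multiplying each resulting differential inequality by $(1+t)^j$ (with $j=1$ for the first order and $j=2$ for the second order and the time derivative) and integrating in $t$, each step feeding on the bounds established in the previous one; the powers of $(1+t)$ in the nonlinear remainders come from the $L^\infty$ bounds \eqref{2.3.2}--\eqref{2.3.3}. The point requiring care is that after one or two $x$-differentiations the energy identities contain boundary terms involving $w_{xt}(0,t)$, $z_{xt}(0,t)$, $w_{xxx}(0,t)$, $z_{xxx}(0,t)$, which must all vanish. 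This is where \eqref{3.2.5} is used: differentiating $w_x(0,t)=z_x(0,t)=0$ in $t$ gives $w_{xt}(0,t)=z_{xt}(0,t)=0$; differentiating $\eqref{3.2.3}_2$ in $x$ and evaluating at $x=0$ gives $bw_{xxx}(0,t)=w_{xt}(0,t)+\lambda w_x(0,t)-\mu z_x(0,t)=0$; and differentiating $\eqref{3.2.3}_1$ in $x$ at $x=0$, after expanding $[(z+u_+)w_x]_{xx}=z_{xx}w_x+2z_xw_{xx}+(z+u_+)w_{xxx}$ and using $w_x(0,t)=z_x(0,t)=w_{xxx}(0,t)=0$, gives $az_{xxx}(0,t)=0$.

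I expect the main (though still mild) obstacle to be precisely this bookkeeping of the boundary terms, together with keeping track of where $\delta_0=|u_+|$ must be small: in each lemma one meets $\kappa u_+\int w_xz_x\,\mathrm{d}x$ (and its differentiated analogues), which must be split as $\tfrac{\kappa|u_+|}{2}(\|w_x\|^2+\|z_x\|^2)$ so that, after multiplication by the large constant $K$, it is still absorbable by the good term $b\lambda\|w_x\|^2$ coming from the $w$-equation; this forces $\varepsilon_0$ and $\delta_0$ to be chosen small depending on $K$. With these points settled, every computation is a direct and slightly simplified transcription of the corresponding step in Section~\ref{S2}: presenting the lemmas in the order ``zero order; first order with weight $(1+t)$; second order with weight $(1+t)^2$; time derivative with weight $(1+t)^2$'' and then summing them yields \eqref{3.2.6} and closes $N(T)\le\varepsilon_0^2$; the asserted regularity and large-time behavior follow at once from the decay rates in \eqref{3.2.6}.
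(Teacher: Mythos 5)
Your proposal is correct and follows essentially the same route as the paper: the paper's proof (Lemmas \ref{Lemma 3.8}--\ref{Lemma 3.11}) likewise transcribes the Cauchy-problem argument of Section \ref{S2.3} in the order zero order, first order with weight $(1+t)$, second order with weight $(1+t)^2$, time derivative with weight $(1+t)^2$, drops the weighted-heat-kernel machinery since $\bar\rho_x\equiv 0$, and kills the boundary terms exactly via the identities $z_x(0,t)=z_{xt}(0,t)=w_x(0,t)=w_{xt}(0,t)=w_{xxx}(0,t)=0$ you derive (cf.\ \eqref{3.2.9}). The only cosmetic difference is that you additionally record $z_{xxx}(0,t)=0$, which the paper does not need because its second-order boundary terms already vanish through $z_{xt}(0,t)=0$ and $w_{xxx}(0,t)=0$.
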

\subsubsection{Proof of Theorem \ref{Thm 3.2}}\label{S3.2.2}
In this subsection, we devote ourselves to the proof of Theorem \ref{Thm 3.2}. As long as {\it a priori} estimates is proved, Theorem \ref{Thm 3.2}} follows in the standard method by combining it with the local-in-time existence and uniqueness as well as the continuity argument. Therefore, in what follows we only estimate the solution $(w,z)(x,t)$, $0<t<T<\infty$, to the initial-boundary value problem \eqref{3.2.3}-\eqref{3.2.5} under the {\it a priori} assumption
\begin{equation}\label{3.2.7}
N(T):=\sup_{0\leq t\leq T}\left\{\sum_{k=0}^{2}(1+t)^{k}\left(\|\partial_{x}^{k} w(t)\|^{2}+\|\partial_{x}^{k} z(t)\|^{2}\right)\right\}\leq\varepsilon_0^2,
\end{equation}
for some $0<\varepsilon_0\ll 1$, and other details are omitted for simplicity.

By the Sobolev inequality, it is easy to deduce that
\begin{equation}\label{3.2.8}
\begin{split}
&\|\partial_{x}^{k} w(\cdot, t)\|_{L^{\infty}} \leq \sqrt2\varepsilon_0(1+t)^{-\frac{1}{4}-\frac{k}{2}}, \quad k=0,1,\\
&\|\partial_{x}^{k} z(\cdot, t)\|_{L^{\infty}} \leq \sqrt2\varepsilon_0(1+t)^{-\frac{1}{4}-\frac{k}{2}}, \quad k=0,1,
 \end{split}
\end{equation}
which will be frequently used in the sequel.

It can be checked that
\begin{equation}\label{3.2.9}
z_x(0,t)=z_{xt}(0,t)=w_x(0,t)=w_{xt}(0,t)=w_{xxx}(0,t)=0,
\end{equation}
where we have used the equation $\eqref{3.2.3}_2$.

In fact, Theorem \ref{Thm 3.2} will be proved by the following series of lemmas, and the estimates obtained below are formally quite similar to those in Section \ref{S2.3}.
\begin{lemma}\label{Lemma 3.8}
Under the assumptions of Theorem \ref{Thm 3.2}, we have
\begin{equation}\label{3.2.10}
\|w(t)\|^{2}+\|z(t)\|^{2}+\int_{0}^{t}\left(\left\|w_{x}(\tau)\right\|^{2}+\|z_{x}(\tau)\|^{2}+\|(\lambda w-\mu z)(\tau)\|^2\right) \mathrm{d} \tau \leq C\left(\left\|w_{0}\right\|^{2}+\left\|z_{0}\right\|^{2}\right),
\end{equation}
for $0 \leq t \leq T$.
\end{lemma}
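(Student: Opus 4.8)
The plan is to reproduce the zero-order estimate of Lemma~\ref{Lemma 2.3}, which becomes genuinely simpler in the Neumann setting for two reasons: the reference state $(u_+,\rho_+)$ is constant, so $\eqref{3.2.3}_2$ carries no inhomogeneous source $\bar\rho_t-b\bar\rho_{xx}$, and since $\bar\rho$ is constant the term $z\bar\rho_x$ is absent from $\eqref{3.2.3}_1$; consequently neither the weighted quantity $\int_0^t\!\int_{\mathbb{R}^+}z^2\widetilde\omega^2\,\mathrm{d}x\,\mathrm{d}\tau$ nor Corollary~\ref{Corollary 3.1} is needed, and the right-hand side of $\eqref{3.2.10}$ carries no $\delta_0$. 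First I would compute $\int_{\mathbb{R}^+}\lambda w\times\eqref{3.2.3}_2\,\mathrm{d}x-\int_{\mathbb{R}^+}\mu z\times\eqref{3.2.3}_2\,\mathrm{d}x$. Integration by parts, using the Neumann relations $\eqref{3.2.9}$ to annihilate every boundary contribution (for instance $b\lambda[w_xw]_{x=0}^{\infty}=0$ and $b\mu[w_xz]_{x=0}^{\infty}=0$), together with the algebraic identity $\lambda^2w^2-2\lambda\mu wz+\mu^2z^2=(\lambda w-\mu z)^2$, produces the dissipative good term and yields
\[
\frac{\mathrm{d}}{\mathrm{d}t}\int_{\mathbb{R}^+}\frac{\lambda w^2}{2}\,\mathrm{d}x+b\lambda\int_{\mathbb{R}^+}w_x^2\,\mathrm{d}x+\int_{\mathbb{R}^+}(\lambda w-\mu z)^2\,\mathrm{d}x=\mu\int_{\mathbb{R}^+}w_tz\,\mathrm{d}x+b\mu\int_{\mathbb{R}^+}w_xz_x\,\mathrm{d}x.
\]

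The next step is to rewrite the bad term $\mu\int_{\mathbb{R}^+}w_tz\,\mathrm{d}x$: integrating by parts in $t$ gives $\frac{\mathrm{d}}{\mathrm{d}t}\int_{\mathbb{R}^+}\mu wz\,\mathrm{d}x-\mu\int_{\mathbb{R}^+}wz_t\,\mathrm{d}x$, and then $\eqref{3.2.3}_1$, again with $z_x(0,t)=0$ and $w_x(0,t)=0$, turns $-\mu\int_{\mathbb{R}^+}wz_t\,\mathrm{d}x$ into $a\mu\int_{\mathbb{R}^+}w_xz_x\,\mathrm{d}x-\kappa\mu\int_{\mathbb{R}^+}(z+u_+)w_x^2\,\mathrm{d}x$. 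Moving the exact differential to the left-hand side, the remaining right-hand side is $(a+b)\mu\int_{\mathbb{R}^+}w_xz_x\,\mathrm{d}x-\kappa\mu\int_{\mathbb{R}^+}(z+u_+)w_x^2\,\mathrm{d}x$, which is estimated precisely as in $\eqref{2.3.10}$-$\eqref{2.3.11}$: the first by $\eta\int_{\mathbb{R}^+}w_x^2\,\mathrm{d}x+C_\eta\int_{\mathbb{R}^+}z_x^2\,\mathrm{d}x$, the second by $C(\varepsilon_0+\delta_0)\int_{\mathbb{R}^+}w_x^2\,\mathrm{d}x$ using $\eqref{3.2.8}$ and $|u_+|=\delta_0$. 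Choosing $\eta$ small gives
\[
\frac{\mathrm{d}}{\mathrm{d}t}\int_{\mathbb{R}^+}\Big(\frac{\lambda w^2}{2}-\mu wz\Big)\mathrm{d}x+\frac{b\lambda}{4}\int_{\mathbb{R}^+}w_x^2\,\mathrm{d}x+\frac12\int_{\mathbb{R}^+}(\lambda w-\mu z)^2\,\mathrm{d}x\le C\int_{\mathbb{R}^+}z_x^2\,\mathrm{d}x.
\]

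To absorb the surviving $\int_{\mathbb{R}^+}z_x^2\,\mathrm{d}x$ I would multiply $\eqref{3.2.3}_1$ by $z$ and integrate over $\mathbb{R}^+$; all boundary terms vanish by $\eqref{3.2.9}$, and Young's inequality applied to $\kappa\int_{\mathbb{R}^+}(z+u_+)w_xz_x\,\mathrm{d}x$ gives $\frac{\mathrm{d}}{\mathrm{d}t}\int_{\mathbb{R}^+}\frac{z^2}{2}\,\mathrm{d}x+\frac{a}{2}\int_{\mathbb{R}^+}z_x^2\,\mathrm{d}x\le C(\varepsilon_0+\delta_0)\int_{\mathbb{R}^+}w_x^2\,\mathrm{d}x$. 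Multiplying this by a constant $K$ large enough that $\frac{Ka}{2}-C>0$ and that the quadratic form $\frac{\lambda}{2}w^2+\frac{K}{2}z^2-\mu wz$ is comparable to $w^2+z^2$, adding it to the previous inequality, and then choosing $\varepsilon_0$ and $\delta_0$ small so that $CK(\varepsilon_0+\delta_0)\int_{\mathbb{R}^+}w_x^2\,\mathrm{d}x$ is absorbed by $\frac{b\lambda}{4}\int_{\mathbb{R}^+}w_x^2\,\mathrm{d}x$, yields $\frac{\mathrm{d}}{\mathrm{d}t}E(t)+c\big(\|w_x\|^2+\|z_x\|^2+\|\lambda w-\mu z\|^2\big)\le 0$ with $E(t)$ equivalent to $\|w\|^2+\|z\|^2$. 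Integrating in $t$ and using $E(0)\le C(\|w_0\|^2+\|z_0\|^2)$ gives $\eqref{3.2.10}$. I do not expect a genuine obstacle here: the only real care is the bookkeeping ensuring that every boundary term generated by integration by parts is killed by the Neumann conditions $\eqref{3.2.9}$; everything else is a simplification of the Cauchy argument of Lemma~\ref{Lemma 2.3}.
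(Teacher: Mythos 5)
Your proposal is correct and follows essentially the same route as the paper: the combination $\int_{\mathbb{R}^+}\lambda w\times\eqref{3.2.3}_2\,\mathrm{d}x-\int_{\mathbb{R}^+}\mu z\times\eqref{3.2.3}_2\,\mathrm{d}x$ to produce $\int(\lambda w-\mu z)^2$, the rewriting of $\mu\int w_tz$ via $\eqref{3.2.3}_1$, the $Kz$ multiplier for $\eqref{3.2.3}_1$, and the observation that all boundary terms are killed by \eqref{3.2.9} are exactly the paper's steps \eqref{3.2.11}--\eqref{3.2.15}. Your remark that the absence of the source $\bar\rho_t-b\bar\rho_{xx}$ and of $z\bar\rho_x$ makes Corollary \ref{Corollary 3.1} and the $\delta_0$ on the right-hand side unnecessary is also consistent with the paper.
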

\begin{proof}
Similarly, in order to produce good term by using the structure of reformulated equations, we can get from $\int_{\mathbb{R}^+}\lambda w\times\eqref{3.2.3}_2\mathrm{d}x-\int_{\mathbb{R}^+}\mu z\times\eqref{3.2.3}_2\mathrm{d}x$ that
\begin{equation}\label{3.2.11}
\begin{split}
&\frac{\rm d}{{\rm d}t} \int_{\mathbb{R}^+}
   \frac{\lambda{w}^2}{2}\mathrm{d}x
   +b\lambda\int_{\mathbb{R}^+} w_x^2\mathrm{d}x
   +\int_{\mathbb{R}^+} (\lambda w-\mu z)^2\mathrm{d}x\\
&=\mu\int_{\mathbb{R}^+}w_tz\mathrm{d}x+b\mu\int_{\mathbb{R}^+} {w_x}{z_x}\mathrm{d}x-bw_x(0,t)(\lambda w-\mu z)(0,t).
\end{split}
\end{equation}
By applying integration by parts and using the equation $\eqref{3.2.3}_1$, one can obtain
\begin{equation}\label{3.2.12}
\begin{split}
\mu\int_{\mathbb{R}^+}w_tz\mathrm{d}x
&=\frac{\rm d}{{\rm d}t}\int_{\mathbb{R}^+}\mu wz\mathrm{d}x-\mu\int_{\mathbb{R}^+}wz_t\mathrm{d}x\\
&=\frac{\rm d}{{\rm d}t}\int_{\mathbb{R}^+}\mu wz\mathrm{d}x-\mu\int_{\mathbb{R}^+}w\bigg\{az_{xx} -\kappa[(z+u_+)w_x]_x\bigg\}\mathrm{d}x\\
&=\frac{\rm d}{{\rm d}t}\int_{\mathbb{R}^+}\mu wz\mathrm{d}x+a\mu\int_{\mathbb{R}^+}w_xz_x\mathrm{d}x-\kappa\mu\int_{\mathbb{R}^+}(z+u_+)w_{x}^2\mathrm{d}x\\
&~~~~+\mu w(0,t)\{az_x-\kappa[(z+u_+)w_x]\}(0,t)\\
&\leq\frac{\rm d}{{\rm d}t}\int_{\mathbb{R}^+}\mu wz\mathrm{d}x+a\mu\int_{\mathbb{R}^+}w_xz_x\mathrm{d}x+\kappa\mu\int_{\mathbb{R}^+}(\|z\|_{L^{\infty}}+|u_+|)w_{x}^2\mathrm{d}x\\
&\leq\frac{\rm d}{{\rm d}t}\int_{\mathbb{R}^+}\mu wz\mathrm{d}x+a\mu\int_{\mathbb{R}^+}w_xz_x\mathrm{d}x+\kappa\mu (\varepsilon_0+\delta_0)\int_{\mathbb{R}^+}w_{x}^2\mathrm{d}x,
\end{split}
\end{equation}
where we have used \eqref{3.2.8} and \eqref{3.2.9}.

Putting \eqref{3.2.12} into \eqref{3.2.11}, then applying \eqref{3.2.9} and Young inequality, we can get
\begin{equation}\label{3.2.13}
\begin{split}
&\frac{\rm d}{{\rm d}t}\int_{\mathbb{R}^+} \left(\frac{\lambda w^2}{2}-\mu wz\right) \mathrm{d}x
+\frac{b\lambda}{4}\int_{\mathbb{R}^+}w_x^2\mathrm{d}x
+\int_{\mathbb{R}^+}(\lambda w-\mu z)^2\mathrm{d}x
\leq C\int_{\mathbb{R}^+}z_{x}^2\mathrm{d}x.
\end{split}
\end{equation}
Next, multiplying $\eqref{3.2.3}_1$ by $K z$ ($K$ is sufficiently large) and integrating it with respect to $x$ over $\mathbb{R}^+$, we have from \eqref{3.2.8}-\eqref{3.2.9} that
\begin{equation}\label{3.2.14}
\begin{split}
\frac{\rm d}{{\rm d}t} \int_{\mathbb{R}^+}\frac{ K z^2}{2}\mathrm{d}x +\frac{K a}{2}\int_{\mathbb{R}^+}z_{x}^2\mathrm{d}x
\leq& C\int_{\mathbb{R}^+}(\|z\|_{L^{\infty}}+|u_+|)w_{x}^2\mathrm{d}x\\
\leq& C(\varepsilon_0+\delta_0)\int_{\mathbb{R}^+}w_{x}^2\mathrm{d}x.
\end{split}
\end{equation}
Hence, combining \eqref{3.2.13} and \eqref{3.2.14}, we obtain
\begin{equation}\label{3.2.15}
\frac{\rm d}{{\rm d}t} \int_{\mathbb{R}^+}
   \left(\frac{\lambda w^2}{2} +\frac{K z^2}{2}-\mu wz\right) \mathrm{d}x
   +\frac{b\lambda}{8}\int_{\mathbb{R}^+}w_x^2\mathrm{d}x+\frac{K a}{4}\int_{\mathbb{R}^+}z_x^2\mathrm{d}x
   +\int_{\mathbb{R}^+}{(\lambda w-\mu z)^2}\mathrm{d}x
   \leq 0.
\end{equation}
Integrating the above inequality with respect to $t$, we conclude \eqref{3.2.10}. The proof of Lemma \ref{Lemma 3.8} is completed.
\end{proof}
\begin{lemma}\label{Lemma 3.9}
Under the assumptions of Theorem \ref{Thm 3.2}, we have
\begin{equation}\label{3.2.16}
\begin{split}
(1+t)&(\|w_{x}(t)\|^{2}+\|z_{x}(t)\|^{2})+\int_{0}^{t}(1+\tau)\left(\left\|w_{x x}(\tau)\right\|^{2}+\|z_{x x}(\tau)\|^{2}+\|(\lambda w_x-\mu z_x)(\tau)\|^2\right) \mathrm{d} \tau\\
&\leq C\left(\left\|w_{0}\right\|_{1}^{2}+\left\|z_{0}\right\|_{1}^{2}\right),
\end{split}
\end{equation}
for $0 \leq t \leq T$.
\end{lemma}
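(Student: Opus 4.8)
The plan is to mimic the proof of Lemma \ref{Lemma 2.4}, exploiting two simplifications special to the Neumann problem: the reformulated system \eqref{3.2.3} is homogeneous (no $\bar\rho$-source term, and the background state $u_+$ is constant, so all of its derivatives vanish), and the boundary relations \eqref{3.2.9} force every boundary integral produced by integration by parts to vanish. First I would differentiate \eqref{3.2.3} in $x$ to obtain the system satisfied by $(w_x,z_x)$. Testing the $w$-equation of this differentiated system against $\lambda w_x$ and against $-\mu z_x$, integrating over $\mathbb{R}^+$, and adding, the mixed terms combine into the good dissipation $\int_{\mathbb{R}^+}(\lambda w_x-\mu z_x)^2\,\mathrm{d}x$; the boundary contributions of the form $w_{xx}w_x|_{x=0}$ and $z_xw_{xx}|_{x=0}$ vanish since $w_x(0,t)=z_x(0,t)=0$. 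This gives
\[
\frac{\mathrm d}{\mathrm dt}\int_{\mathbb R^+}\frac{\lambda w_x^2}{2}\,\mathrm dx+b\lambda\int_{\mathbb R^+}w_{xx}^2\,\mathrm dx+\int_{\mathbb R^+}(\lambda w_x-\mu z_x)^2\,\mathrm dx=\mu\int_{\mathbb R^+}w_{xt}z_x\,\mathrm dx+b\mu\int_{\mathbb R^+}w_{xx}z_{xx}\,\mathrm dx.
\]

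Next I would write $\mu\int_{\mathbb R^+}w_{xt}z_x\,\mathrm dx=\frac{\mathrm d}{\mathrm dt}\int_{\mathbb R^+}\mu w_xz_x\,\mathrm dx-\mu\int_{\mathbb R^+}w_xz_{xt}\,\mathrm dx$ and substitute $z_{xt}$ from the ($x$-differentiated) first equation of \eqref{3.2.3}; again using $w_x(0,t)=0$ all boundary terms drop, leaving $a\mu\int w_{xx}z_{xx}-\kappa\mu\int(z+u_+)w_{xx}^2-\kappa\mu\int z_xw_xw_{xx}$. The a priori assumption \eqref{3.2.7} and the sup-norm bounds \eqref{3.2.8} give $\|z\|_{L^\infty}+|u_+|\le C(\varepsilon_0+\delta_0)$ and $\|w_x\|_{L^\infty}^2+\|z_x\|_{L^\infty}^2\le C\varepsilon_0^2(1+t)^{-3/2}$, so by Young's inequality these terms are absorbed and we reach
\[
\frac{\mathrm d}{\mathrm dt}\int_{\mathbb R^+}\Big(\frac{\lambda w_x^2}{2}-\mu w_xz_x\Big)\mathrm dx+\frac{b\lambda}{4}\int_{\mathbb R^+}w_{xx}^2\,\mathrm dx+\frac12\int_{\mathbb R^+}(\lambda w_x-\mu z_x)^2\,\mathrm dx\le C\int_{\mathbb R^+}z_{xx}^2\,\mathrm dx+C(\varepsilon_0+\delta_0)(1+t)^{-1}\int_{\mathbb R^+}(w_x^2+z_x^2)\,\mathrm dx.
\]
To dominate the leftover $z_{xx}$ term I would test the $x$-differentiated first equation against $Kz_x$ with $K\gg1$, integrate over $\mathbb{R}^+$ (all boundary terms vanishing by $z_x(0,t)=0$), estimate its nonlinear terms in the same way, and add the outcome to the previous inequality. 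Fixing $K$ large to absorb $C\int z_{xx}^2$ and then taking $\varepsilon_0,\delta_0$ small to absorb the resulting $w_{xx}^2$-contribution, and noting that $E_1(t):=\int_{\mathbb R^+}\big(\tfrac{\lambda}{2} w_x^2+\tfrac{K}{2} z_x^2-\mu w_xz_x\big)\mathrm dx\simeq\|w_x\|^2+\|z_x\|^2$, one obtains
\[
\frac{\mathrm d}{\mathrm dt}E_1(t)+c\big(\|w_{xx}\|^2+\|z_{xx}\|^2+\|(\lambda w_x-\mu z_x)\|^2\big)\le C(\varepsilon_0+\delta_0)(1+t)^{-1}\big(\|w_x\|^2+\|z_x\|^2\big).
\]

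Finally I would integrate this differential inequality, using Lemma \ref{Lemma 3.8} to bound $\int_0^t(\|w_x\|^2+\|z_x\|^2)\,\mathrm d\tau$, to get the unweighted first-order estimate, and then multiply by $(1+t)$ and integrate once more. Since $\frac{\mathrm d}{\mathrm dt}\big[(1+t)E_1\big]=(1+t)\frac{\mathrm d}{\mathrm dt}E_1+E_1$ and $E_1\simeq\|w_x\|^2+\|z_x\|^2$ is time-integrable by Lemma \ref{Lemma 3.8}, the extra weight is controlled and \eqref{3.2.16} follows. I do not expect a serious obstacle here: the delicate boundary analysis needed in the Dirichlet case (the estimate \eqref{3.1.21}) is simply absent because \eqref{3.2.9} kills every boundary term, and the only care required is the usual ordering of constants — fix $K$ first, then shrink $\varepsilon_0$ and $\delta_0$ — together with the bookkeeping of the $(1+t)$-weights.
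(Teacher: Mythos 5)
Your proposal is correct and follows essentially the same route as the paper's proof of Lemma \ref{Lemma 3.9}: the same weighted combination $\lambda w_x\times\partial_x\eqref{3.2.3}_2-\mu z_x\times\partial_x\eqref{3.2.3}_2$ to produce the dissipation $\int_{\mathbb{R}^+}(\lambda w_x-\mu z_x)^2\,\mathrm{d}x$, the same substitution of $z_{xt}$ from $\partial_x\eqref{3.2.3}_1$, the same $Kz_x$-multiplier to absorb the leftover $z_{xx}$ term, and the same two-stage integration (unweighted first, then with weight $1+t$ using Lemma \ref{Lemma 3.8}), with all boundary terms killed by \eqref{3.2.9} exactly as you describe. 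The only cosmetic difference is that the paper records the intermediate decay factor as $(1+t)^{-3/2}$ rather than your $(1+t)^{-1}$, which changes nothing in the conclusion.
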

\begin{proof}
Similarly, by $\int_{\mathbb{R}^+}\lambda w_x\times\partial_x\eqref{3.2.3}_2\mathrm{d}x-\int_{\mathbb{R}^+}\mu z_x\times\partial_x\eqref{3.2.3}_2\mathrm{d}x$, we have
\begin{equation}\label{3.2.17}
\begin{split}
&\frac{\rm d}{{\rm d}t} \int_{\mathbb{R}^+}
   \frac{\lambda{w_x}^2}{2}\mathrm{d}x
   +b\lambda\int_{\mathbb{R}^+} w_{xx}^2\mathrm{d}x
   +\int_{\mathbb{R}^+} (\lambda w_x-\mu z_x)^2\mathrm{d}x\\
&=\mu\int_{\mathbb{R}^+}w_{xt}z_x\mathrm{d}x+b\mu\int_{\mathbb{R}^+} {w_{xx}}{z_{xx}}\mathrm{d}x-bw_{xx}(0,t)(\lambda w_x-\mu z_x)(0,t).
\end{split}
\end{equation}
Now we estimate the first term in the right hand of \eqref{3.2.17}. By applying integration by parts and using $\partial_x\eqref{3.2.3}_1$, together with \eqref{3.2.9}, one can obtain
\begin{equation}\label{3.2.18}
\begin{split}
\mu\int_{\mathbb{R}^+}w_{xt}z_x\mathrm{d}x
&=\frac{\rm d}{{\rm d}t}\int_{\mathbb{R}^+}\mu w_xz_x\mathrm{d}x-\mu\int_{\mathbb{R}^+}w_xz_{xt}\mathrm{d}x\\
&=\frac{\rm d}{{\rm d}t}\int_{\mathbb{R}^+}\mu w_xz_x\mathrm{d}x-\mu\int_{\mathbb{R}^+}w_x\bigg\{az_{xxx} -\kappa[(z+u_+)w_x]_{xx}\bigg\}\mathrm{d}x\\
&=\frac{\rm d}{{\rm d}t}\int_{\mathbb{R}^+}\mu w_xz_x\mathrm{d}x+a\mu\int_{\mathbb{R}^+}w_{xx}z_{xx}\mathrm{d}x-\kappa\mu\int_{\mathbb{R}^+}w_{xx}[(z+u_+)w_x]_x\mathrm{d}x\\
&~~~~+\mu w_{x}(0,t)z_{t}(0,t)\\
&=\frac{\rm d}{{\rm d}t}\int_{\mathbb{R}^+}\mu w_xz_x\mathrm{d}x+a\mu\int_{\mathbb{R}^+}w_{xx}z_{xx}\mathrm{d}x-\kappa\mu\int_{\mathbb{R}^+}w_{xx}z_xw_x\mathrm{d}x\\
&~~~~-\kappa\mu\int_{\mathbb{R}^+}(z+u_+)w_{xx}^2\mathrm{d}x.
\end{split}
\end{equation}
Firstly, by using Young inequality and \eqref{3.2.8}, we have
\begin{equation}\label{3.2.19}
\begin{split}
-\kappa\mu\int_{\mathbb{R}^+}w_{xx}z_xw_x\mathrm{d}x
&\leq \eta\int_{\mathbb{R}^+}w_{xx}^2\mathrm{d}x+C_{\eta}\int_{\mathbb{R}^+}\|z_x\|_{L^{\infty}}^2w_x^2\mathrm{d}x\\
&\leq \eta\int_{\mathbb{R}^+}w_{xx}^2\mathrm{d}x+C_{\eta}\varepsilon_0^2(1+t)^{-\frac{3}{2}}\int_{\mathbb{R}^+}w_x^2\mathrm{d}x.
\end{split}
\end{equation}
Secondly, it is easy to derive that
\begin{equation}\label{3.2.20}
\begin{split}
-\kappa\mu\int_{\mathbb{R}^+}(z+u_+)w_{xx}^2\mathrm{d}x
\leq C\int_{\mathbb{R}^+}(\|z\|_{L^{\infty}}+|u_+|)w_{xx}^2\mathrm{d}x
\leq C(\varepsilon_0+\delta_0)\int_{\mathbb{R}^+}w_{xx}^2\mathrm{d}x.
\end{split}
\end{equation}
Substituting \eqref{3.2.19} and \eqref{3.2.20} into \eqref{3.2.18}, and summing the resulting inequality to \eqref{3.2.17}, then by using Young inequality and \eqref{3.2.9}, we have
\begin{equation}\label{3.2.21}
\begin{split}
&\frac{\rm d}{{\rm d}t}\int_{\mathbb{R}^+} \left(\frac{\lambda w_x^2}{2}-\mu w_xz_x\right) \mathrm{d}x
+\frac{b\lambda}{4}\int_{\mathbb{R}^+}w_{xx}^2\mathrm{d}x
+\int_{\mathbb{R}^+}{(\lambda w_x-\mu z_x)^2}\mathrm{d}x\\
\leq& C(1+t)^{-\frac{3}{2}}\int_{\mathbb{R}^+}w_{x}^2\mathrm{d}x
   +C\int_{\mathbb{R}^+}z_{xx}^2\mathrm{d}x.
\end{split}
\end{equation}
Multiplying $\partial_x\eqref{3.2.3}_1$ by $K z_x$ ($K$ is sufficiently large) and integrating it with respect to $x$ over $\mathbb{R}^+$, then similar to the treatment of \eqref{3.2.19} and \eqref{3.2.20}, we can get from \eqref{3.2.9} that
\begin{equation}\label{3.2.22}
\frac{\rm d}{{\rm d}t} \int_{\mathbb{R}^+}\frac{ K z_x^2}{2}\mathrm{d}x +\frac{K a}{2}\int_{\mathbb{R}^+}z_{xx}^2\mathrm{d}x
\leq C(\varepsilon_0+\delta_0)\int_{\mathbb{R}^+}w_{xx}^2\mathrm{d}x+C(1+t)^{-\frac{3}{2}}\int_{\mathbb{R}^+}w_x^2\mathrm{d}x.
\end{equation}
Therefore, combining \eqref{3.2.21} and \eqref{3.2.22}, we obtain
\begin{equation}\label{3.2.23}
\begin{split}
&\frac{\rm d}{{\rm d}t} \int_{\mathbb{R}^+}
   \left(\frac{\lambda w_x^2}{2} +\frac{K z_x^2}{2}-\mu w_xz_x\right) \mathrm{d}x
   +\frac{b\lambda}{8}\int_{\mathbb{R}^+}w_{xx}^2\mathrm{d}x+\frac{K a}{4}\int_{\mathbb{R}^+}z_{xx}^2\mathrm{d}x
   +\int_{\mathbb{R}^+}{(\lambda w_x-\mu z_x)^2}\mathrm{d}x\\
   \leq
&C(1+t)^{-\frac{3}{2}}\int_{\mathbb{R}^+}w_x^2\mathrm{d}x.
\end{split}
\end{equation}
Integrating \eqref{3.2.23} over $(0,t)$, we get
\begin{equation}\label{3.2.24}
\begin{split}
&\|w_{x}(t)\|^{2}+\|z_{x}(t)\|^{2}+\int_{0}^{t}\left(\left\|w_{x x}(\tau)\right\|^{2}+\|z_{x x}(\tau)\|^{2}+\|(\lambda w_x-\mu z_x)(\tau)\|^2\right) \mathrm{d} \tau\\
\leq& C\left(\left\|w_{0}\right\|_{1}^{2}+\left\|z_{0}\right\|_1^{2}\right).
\end{split}
\end{equation}
Multiplying \eqref{3.2.23} by $(1 + t)$, and integrating it with respect to $t$, by applying \eqref{3.2.10}, one can immediately obtain \eqref{3.2.16}. The proof of Lemma \ref{Lemma 3.9} is completed.
\end{proof}
\begin{lemma}\label{Lemma 3.10}
Under the assumptions of Theorem \ref{Thm 3.2}, we have
\begin{equation}\label{3.2.25}
\begin{split}
&(1+t)^{2}(\|w_{x x}(t)\|^{2}+\|z_{x x}(t)\|^{2})\\
&+\int_{0}^{t}(1+\tau)^{2}\left(\left\|w_{x x x}(\tau)\right\|^{2}+\|z_{x x x}(\tau)\|^{2}+\|(\lambda w_{xx}-\mu z_{xx})(\tau)\|^2\right) \mathrm{d} \tau\\
\leq& C\left(\left\|w_{0}\right\|_{2}^{2}+\left\|z_{0}\right\|_{2}^{2}\right),
\end{split}
\end{equation}
for $0 \leq t \leq T$.
\end{lemma}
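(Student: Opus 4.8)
The plan is to carry out the second-order energy estimate for \eqref{3.2.3} on $\mathbb{R}^+$ in the same spirit as Lemma \ref{Lemma 2.5}, with two simplifications and one genuinely new point. The simplifications are that here the profile is the \emph{constant} state $(u_+,\rho_+)$, so differentiating $\eqref{3.2.3}_1$ produces only the factor $z+u_+$ (no $\bar u_x$, $\bar u_{xx}$, $\bar\rho_x$, $\bar\rho_{xx}$) and the $w$-equation carries no source $\bar\rho_{xxt}-b\bar\rho_{xxxx}$; consequently the only error coefficients that appear are $\|z\|_{L^\infty}+\delta_0$ and, via \eqref{3.2.8}, $\|w_x\|_{L^\infty}^2+\|z_x\|_{L^\infty}^2\le C\varepsilon_0^2(1+t)^{-3/2}$. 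The new point is that integration by parts in $x$ on a half line generates boundary terms at $x=0$, which all have to be shown to vanish.

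The first step is the boundary bookkeeping. From the Neumann condition \eqref{3.2.5} together with \eqref{3.2.9} we already have $w_x(0,t)=z_x(0,t)=w_{xt}(0,t)=z_{xt}(0,t)=w_{xxx}(0,t)=0$. Differentiating $\eqref{3.2.3}_1$ once in $x$, evaluating at $x=0$, and noting that $[(z+u_+)w_x]_{xx}=z_{xx}w_x+2z_xw_{xx}+(z+u_+)w_{xxx}$ vanishes at $x=0$, we obtain in addition $z_{xxx}(0,t)=0$. With these identities, every boundary contribution that arises below --- all of the schematic forms $w_{xx}(0,t)w_{xxx}(0,t)$, $z_{xx}(0,t)w_{xxx}(0,t)$, $w_{xx}(0,t)z_{xxx}(0,t)$, $z_{xx}(0,t)z_{xxx}(0,t)$, $w_{xx}(0,t)[(z+u_+)w_x]_{xx}(0,t)$, $z_{xx}(0,t)[(z+u_+)w_x]_{xx}(0,t)$ --- vanishes.

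Next, differentiate \eqref{3.2.3} twice in $x$ and compute $\int_{\mathbb{R}^+}\lambda w_{xx}\times\partial_x^2\eqref{3.2.3}_2\,\mathrm{d}x-\int_{\mathbb{R}^+}\mu z_{xx}\times\partial_x^2\eqref{3.2.3}_2\,\mathrm{d}x$; integrating by parts in $x$ and in $t$ (using $\partial_x^2\eqref{3.2.3}_1$ to eliminate $z_{xxt}$) produces, exactly as in \eqref{2.3.32} but without the $\bar u$, $\bar\rho$ contributions and with all boundary terms gone, the good term $\int_{\mathbb{R}^+}(\lambda w_{xx}-\mu z_{xx})^2\mathrm{d}x$, the dissipation $b\lambda\int_{\mathbb{R}^+}w_{xxx}^2\mathrm{d}x$, the cross term $(a+b)\mu\int_{\mathbb{R}^+}w_{xxx}z_{xxx}\mathrm{d}x$, and the error terms $-\kappa\mu\int_{\mathbb{R}^+}z_{xx}w_xw_{xxx}\mathrm{d}x$, $-2\kappa\mu\int_{\mathbb{R}^+}z_xw_{xx}w_{xxx}\mathrm{d}x$, $-\kappa\mu\int_{\mathbb{R}^+}(z+u_+)w_{xxx}^2\mathrm{d}x$. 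Estimating these by Young's inequality and \eqref{3.2.8} (the first two by $\eta\|w_{xxx}\|^2+C_\eta\varepsilon_0^2(1+t)^{-3/2}(\|w_{xx}\|^2+\|z_{xx}\|^2)$, the third by $C(\varepsilon_0+\delta_0)\|w_{xxx}\|^2$, and splitting off $C\|z_{xxx}\|^2$ from the cross term) gives the analogue of \eqref{2.3.37}. Multiplying $\partial_x^2\eqref{3.2.3}_1$ by $Kz_{xx}$ with $K$ large, integrating over $\mathbb{R}^+$ (boundary terms again vanish) and estimating in the same way gives the analogue of \eqref{2.3.38}. Summing the two, first taking $K$ large and then $\varepsilon_0,\delta_0$ small so that $\tfrac{b\lambda}{4}-C(\varepsilon_0+\delta_0)\ge\tfrac{b\lambda}{8}$ and $\tfrac{Ka}{2}-C\ge\tfrac{Ka}{4}$, and using $\tfrac{\lambda}{2}w_{xx}^2+\tfrac{K}{2}z_{xx}^2-\mu w_{xx}z_{xx}\sim w_{xx}^2+z_{xx}^2$, yields the analogue of \eqref{2.3.39}:
\begin{equation}\notag
\begin{split}
&\frac{\mathrm{d}}{\mathrm{d}t}\int_{\mathbb{R}^+}\left(\frac{\lambda w_{xx}^2}{2}+\frac{Kz_{xx}^2}{2}-\mu w_{xx}z_{xx}\right)\mathrm{d}x
+\frac{b\lambda}{8}\int_{\mathbb{R}^+}w_{xxx}^2\mathrm{d}x+\frac{Ka}{4}\int_{\mathbb{R}^+}z_{xxx}^2\mathrm{d}x
+\frac{1}{2}\int_{\mathbb{R}^+}(\lambda w_{xx}-\mu z_{xx})^2\mathrm{d}x\\
&\le C\varepsilon_0^2(1+t)^{-\frac{3}{2}}\int_{\mathbb{R}^+}(w_{xx}^2+z_{xx}^2)\mathrm{d}x.
\end{split}
\end{equation}

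Finally, integrating this inequality over $(0,t)$ and bounding $\int_0^t(1+\tau)^{-3/2}(\|w_{xx}\|^2+\|z_{xx}\|^2)\mathrm{d}\tau\le\int_0^t(1+\tau)(\|w_{xx}\|^2+\|z_{xx}\|^2)\mathrm{d}\tau\le C$ by \eqref{3.2.16} gives the un-weighted bound $\|w_{xx}(t)\|^2+\|z_{xx}(t)\|^2+\int_0^t(\|w_{xxx}\|^2+\|z_{xxx}\|^2+\|(\lambda w_{xx}-\mu z_{xx})\|^2)\mathrm{d}\tau\le C(\|w_0\|_2^2+\|z_0\|_2^2)$. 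Then multiplying the inequality by $(1+t)^2$, writing $(1+t)^2\tfrac{\mathrm{d}}{\mathrm{d}t}G=\tfrac{\mathrm{d}}{\mathrm{d}t}[(1+t)^2G]-2(1+t)G$ with $G(t)=\int_{\mathbb{R}^+}(\tfrac{\lambda}{2}w_{xx}^2+\tfrac{K}{2}z_{xx}^2-\mu w_{xx}z_{xx})\mathrm{d}x$, integrating over $(0,t)$, and using $\int_0^t(1+\tau)G(\tau)\mathrm{d}\tau\le C\int_0^t(1+\tau)(\|w_{xx}\|^2+\|z_{xx}\|^2)\mathrm{d}\tau\le C$ together with $\int_0^t(1+\tau)^{1/2}(\|w_{xx}\|^2+\|z_{xx}\|^2)\mathrm{d}\tau\le\int_0^t(1+\tau)(\|w_{xx}\|^2+\|z_{xx}\|^2)\mathrm{d}\tau\le C$ from \eqref{3.2.16}, yields \eqref{3.2.25}. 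I expect the main obstacle to be precisely the boundary bookkeeping in the second step: one must differentiate the equations just enough times and combine them with \eqref{3.2.5}, \eqref{3.2.9} to be sure that $w_{xxx}(0,t)=z_{xxx}(0,t)=0$ and $[(z+u_+)w_x]_{xx}(0,t)=0$, after which the interior estimates are a routine, and slightly easier, repetition of Lemma \ref{Lemma 2.5}.
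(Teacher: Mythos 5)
Your proposal is correct and follows essentially the same route as the paper: the same $\int_{\mathbb{R}^+}\lambda w_{xx}\times\partial_x^2\eqref{3.2.3}_2\,\mathrm{d}x-\int_{\mathbb{R}^+}\mu z_{xx}\times\partial_x^2\eqref{3.2.3}_2\,\mathrm{d}x$ combination plus the $Kz_{xx}$ multiplier, the same error estimates via \eqref{3.2.8}, and the same two-stage time-weighting using \eqref{3.2.16}. The only cosmetic difference is in the boundary bookkeeping: you verify $z_{xxx}(0,t)=0$ and $[(z+u_+)w_x]_{xx}(0,t)=0$ separately, whereas the paper recombines the boundary contributions into $\mu w_{xx}(0,t)z_{xt}(0,t)$ and $-Kz_{xx}(0,t)z_{xt}(0,t)$ via $\partial_x\eqref{3.2.3}_1$ and kills them with \eqref{3.2.9} — the two are equivalent.
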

\begin{proof}
Similarly, from $\int_{\mathbb{R}^+}\lambda w_{xx}\times\partial_x^2\eqref{3.2.3}_2\mathrm{d}x-\int_{\mathbb{R}^+}\mu z_{xx}\times\partial_x^2\eqref{3.2.3}_2\mathrm{d}x$, then by applying $\partial_x^2\eqref{3.2.3}_1$ and \eqref{3.2.9}, we have
\begin{equation}\label{3.2.26}
\begin{split}
&\frac{\rm d}{{\rm d}t} \int_{\mathbb{R}^+}
   \left(\frac{\lambda w_{xx}^2}{2}-\mu w_{xx}z_{xx}\right)\mathrm{d}x
   +{b\lambda}\int_{\mathbb{R}^+}w_{xxx}^2\mathrm{d}x
   +\int_{\mathbb{R}^+} (\lambda w_{xx}-\mu z_{xx})^2\mathrm{d}x\\
&=(a+b)\mu\int_{\mathbb{R}^+}w_{xxx}z_{xxx}\mathrm{d}x-\kappa\mu\int_{\mathbb{R}^+}w_{xxx}[(z+u_+)w_x]_{xx}\mathrm{d}x-bw_{xxx}(0,t)(\lambda w_{xx}-\mu z_{xx})(0,t)\\
&~~~~+\mu w_{xx}(0,t)z_{xt}(0,t)\\
&=(a+b)\mu\int_{\mathbb{R}^+}w_{xxx}z_{xxx}\mathrm{d}x
-\kappa\mu\int_{\mathbb{R}^+}w_{x x x}w_{x}z_{x x}\mathrm{d}x
-2\kappa\mu\int_{\mathbb{R}^+}w_{x x x}z_{x}w_{x x}\mathrm{d}x\\
&~~~~-\kappa\mu\int_{\mathbb{R}^+}(z+u_+)w_{x x x}^2\mathrm{d}x.
\end{split}
\end{equation}
Firstly, applying Young inequality and \eqref{3.2.8}, one gets
\begin{equation}\label{3.2.27}
\begin{split}
-\kappa\mu\int_{\mathbb{R}^+}w_{x x x}w_{x}z_{x x}\mathrm{d}x
&\leq \eta\int_{\mathbb{R}^+}w_{xxx}^2\mathrm{d}x+C_{\eta}\int_{\mathbb{R}^+}\|w_x\|_{L^{\infty}}^2z_{xx}^2\mathrm{d}x\\
&\leq \eta\int_{\mathbb{R}^+}w_{xxx}^2\mathrm{d}x+C_{\eta}\varepsilon_0^2(1+t)^{-\frac{3}{2}}\int_{\mathbb{R}^+}z_{xx}^2\mathrm{d}x,
\end{split}
\end{equation}
and
\begin{equation}\label{3.2.28}
\begin{split}
-2\kappa\mu\int_{\mathbb{R}^+}w_{x x x}z_{x}w_{x x}\mathrm{d}x
&\leq \eta\int_{\mathbb{R}^+}w_{xxx}^2\mathrm{d}x+C_{\eta}\int_{\mathbb{R}^+}\|z_x\|_{L^{\infty}}^2w_{xx}^2\mathrm{d}x\\
&\leq \eta\int_{\mathbb{R}^+}w_{xxx}^2\mathrm{d}x+C_{\eta}\varepsilon_0^2(1+t)^{-\frac{3}{2}}\int_{\mathbb{R}^+}w_{xx}^2\mathrm{d}x.
\end{split}
\end{equation}
Secondly, similar to treatment of \eqref{3.2.20}, we have
\begin{equation}\label{3.2.29}
\begin{split}
-\kappa\mu\int_{\mathbb{R}^+}(z+u_+)w_{x x x}^2\mathrm{d}x
\leq C\int_{\mathbb{R}^+}(\|z\|_{L^{\infty}}+|u_+|)w_{xxx}^2\mathrm{d}x
\leq C(\varepsilon_0+\delta_0)\int_{\mathbb{R}^+}w_{xxx}^2\mathrm{d}x.
\end{split}
\end{equation}
Putting \eqref{3.2.27}-\eqref{3.2.29} into \eqref{3.2.26}, we have
\begin{equation}\label{3.2.30}
\begin{split}
&\frac{\rm d}{{\rm d}t}\int_{\mathbb{R}^+} \left(\frac{\lambda w_{xx}^2}{2}-\mu w_{xx}z_{xx}\right) \mathrm{d}x
+\frac{b\lambda}{4}\int_{\mathbb{R}^+}w_{xxx}^2\mathrm{d}x
+\int_{\mathbb{R}^+}{(\lambda w_{xx}-\mu z_{xx})^2}\mathrm{d}x\\
\leq&
   C(1+t)^{-\frac{3}{2}}\int_{\mathbb{R}^+}(w_{xx}^2+z_{xx}^2)\mathrm{d}x
   +C\int_{\mathbb{R}^+}z_{xxx}^2\mathrm{d}x.
\end{split}
\end{equation}
Next, multiplying $\partial_x^2\eqref{3.2.3}_1$ by $K z_{xx}$ ($K$ is sufficiently large) and integrating it with respect to $x$ over $\mathbb{R}^+$, then similar to the treatment of \eqref{3.2.27}, \eqref{3.2.28} and \eqref{3.2.29}, we get
\begin{equation}\label{3.2.31}
\begin{split}
\frac{\rm d}{{\rm d}t} \int_{\mathbb{R}^+}\frac{ K z_{xx}^2}{2}\mathrm{d}x+\frac{K a}{2}\int_{\mathbb{R}^+}z_{xxx}^2\mathrm{d}x
&\leq C(\varepsilon_0+\delta_0)\int_{\mathbb{R}^+}w_{xxx}^2\mathrm{d}x+C(1+t)^{-\frac{3}{2}}\int_{\mathbb{R}^+}(w_{xx}^2+z_{xx}^2)\mathrm{d}x\\
&~~~~-K z_{xx}(0,t)z_{xt}(0,t)\\
&\leq C(\varepsilon_0+\delta_0)\int_{\mathbb{R}^+}w_{xxx}^2\mathrm{d}x+C(1+t)^{-\frac{3}{2}}\int_{\mathbb{R}^+}(w_{xx}^2+z_{xx}^2)\mathrm{d}x,
\end{split}
\end{equation}
where in the last inequality we have used \eqref{3.2.9}.

Combining \eqref{3.2.30} and \eqref{3.2.31}, it follows that
\begin{equation}\label{3.2.32}
\begin{split}
&\frac{\rm d}{{\rm d}t} \int_{\mathbb{R}^+}
   \left(\frac{\lambda w_{xx}^2}{2} +\frac{K z_{xx}^2}{2}-\mu w_{xx}z_{xx}\right) \mathrm{d}x
   +\frac{b\lambda}{8}\int_{\mathbb{R}^+}w_{xxx}^2\mathrm{d}x+\frac{K a}{4}\int_{\mathbb{R}^+}z_{xxx}^2\mathrm{d}x\\
   &+\int_{\mathbb{R}^+}{(\lambda w_{xx}-\mu z_{xx})^2}\mathrm{d}x\\
   \leq
&C(1+t)^{-\frac{3}{2}}\int_{\mathbb{R}^+}(w_{xx}^2+z_{xx}^2)\mathrm{d}x.
\end{split}
\end{equation}
Integrating \eqref{3.2.32} over $(0,t)$ and using \eqref{3.2.16}, we have
\begin{equation}\label{3.2.33}
\begin{split}
&\|w_{x x}(t)\|^{2}+\|z_{x x}(t)\|^{2}+\int_{0}^{t}\left(\left\|w_{x x x}(\tau)\right\|^{2}+\|z_{x x x}(\tau)\|^{2}+\|(\lambda w_{xx}-\mu z_{xx})(\tau)\|^2\right) \mathrm{d} \tau \\
\leq& C\left(\left\|w_{0}\right\|_{2}^{2}+\left\|z_{0}\right\|_{2}^{2}\right).
\end{split}
\end{equation}
Multiplying \eqref{3.2.32} by $(1+t)^2$, integrating with respect to $t$, then using \eqref{3.2.16}, we obtain \eqref{3.2.25}. The proof of Lemma \ref{Lemma 3.10} is completed.
\end{proof}
\begin{lemma}\label{Lemma 3.11}
Under the assumptions of Theorem \ref{Thm 3.2}, we have
\begin{equation}\label{3.2.34}
\begin{split}
&(1+t)^{2}(\|w_{t}(t)\|^{2}+\|z_{t}(t)\|^{2})\\
&+\int_{0}^{t}(1+\tau)^{2}\left(\left\|w_{xt}(\tau)\right\|^{2}+\|z_{xt}(\tau)\|^{2}+\|(\lambda w_{t}-\mu z_{t})(\tau)\|^2\right) \mathrm{d} \tau\\
\leq& C\left(\left\|w_{0}\right\|_{2}^{2}+\left\|z_{0}\right\|_{2}^{2}\right),
\end{split}
\end{equation}
for $0 \leq t \leq T$.
\end{lemma}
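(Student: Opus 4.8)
The plan is to mimic the proof of Lemma~\ref{Lemma 2.6}, with two simplifications particular to the Neumann problem: the steady state $(\bar u,\bar\rho)\equiv(u_+,\rho_+)$ is constant, so the reformulated system \eqref{3.2.3} carries neither a $\bar\rho$-forcing nor any $\bar\rho_x$-type term, and every boundary contribution that arises will vanish identically by \eqref{3.2.9}. First I would differentiate \eqref{3.2.3} in $t$ and compute $\int_{\mathbb{R}^+}\lambda w_t\times\partial_t\eqref{3.2.3}_2\,\mathrm{d}x-\int_{\mathbb{R}^+}\mu z_t\times\partial_t\eqref{3.2.3}_2\,\mathrm{d}x$; integrating by parts and using $\partial_t\eqref{3.2.3}_1$ to rewrite $z_{tt}$ produces the good term $\|\lambda w_t-\mu z_t\|^2$ and the dissipation $\tfrac{b\lambda}{4}\|w_{xt}\|^2$, while the boundary terms — of the form $[w_tw_{xt}]_{x=0}$, $[w_{xt}z_t]_{x=0}$, $[w_t((z+u_+)w_x)_t]_{x=0}$ — all vanish since $w_x(0,t)=w_{xt}(0,t)=z_{xt}(0,t)=0$. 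The cubic terms $-\kappa\mu\int w_{xt}w_xz_t\,\mathrm{d}x$ and $-\kappa\mu\int(z+u_+)w_{xt}^2\,\mathrm{d}x$ are controlled by Young's inequality together with the $L^\infty$-decay \eqref{3.2.8}, giving respectively $\eta\|w_{xt}\|^2+C_\eta\varepsilon_0^2(1+t)^{-3/2}\|z_t\|^2$ and $C(\varepsilon_0+\delta_0)\|w_{xt}\|^2$; the cross term $(a+b)\mu\int w_{xt}z_{xt}\,\mathrm{d}x$ is split as $\tfrac{b\lambda}{8}\|w_{xt}\|^2+C\|z_{xt}\|^2$. Multiplying $\partial_t\eqref{3.2.3}_1$ by $Kz_t$ ($K$ large) and integrating (again all boundary terms vanish) gives a companion inequality controlling $\tfrac{Ka}{2}\|z_{xt}\|^2$ by $C(\varepsilon_0+\delta_0)\|w_{xt}\|^2+C(\varepsilon_0+\delta_0)(1+t)^{-1}\|z_t\|^2$; adding the two and absorbing $C\|z_{xt}\|^2$ and $C(\varepsilon_0+\delta_0)\|w_{xt}\|^2$ into the dissipation yields the analogue of \eqref{2.3.48},
\[
\frac{\rm d}{{\rm d}t}\int_{\mathbb{R}^+}\!\Big(\tfrac{\lambda w_t^2}{2}+\tfrac{Kz_t^2}{2}-\mu w_tz_t\Big)\mathrm{d}x+\tfrac{b\lambda}{8}\|w_{xt}\|^2+\tfrac{Ka}{4}\|z_{xt}\|^2+\tfrac12\|\lambda w_t-\mu z_t\|^2\le C(\varepsilon_0+\delta_0)(1+t)^{-1}\|z_t\|^2 .
\]

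Next I would upgrade the dissipation with the equation $\eqref{3.2.3}_1$ itself: from $z_t=az_{xx}-\kappa z_xw_x-\kappa(z+u_+)w_{xx}$ one gets $\|z_t\|^2\le C(\|w_{xx}\|^2+\|z_{xx}\|^2)+C(1+t)^{-1}(\|w_x\|^2+\|z_x\|^2)$ (no heat-kernel weight appears, in contrast to \eqref{2.3.49}, because $\bar\rho$ is constant). Combining this with $(\lambda w_t)^2\le2(\lambda w_t-\mu z_t)^2+2\mu^2z_t^2$ converts the previous inequality into one whose dissipation additionally contains $\tfrac{\lambda^2}{4}\|w_t\|^2+\tfrac12\|z_t\|^2$ and whose right-hand side is $C(\|w_{xx}\|^2+\|z_{xx}\|^2)+C(1+t)^{-1}(\|w_x\|^2+\|z_x\|^2)$. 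Integrating this over $(0,t)$ and invoking Lemmas~\ref{Lemma 3.8}--\ref{Lemma 3.10} (which make $\int_0^\infty(\|w_{xx}\|^2+\|z_{xx}\|^2)\,\mathrm{d}\tau$ and $\int_0^\infty(\|w_x\|^2+\|z_x\|^2)\,\mathrm{d}\tau$ finite) gives the unweighted bounds on $\|w_t\|^2+\|z_t\|^2$ and on $\int_0^t(\|w_{xt}\|^2+\|z_{xt}\|^2+\|w_t\|^2+\|z_t\|^2)\,\mathrm{d}\tau$; multiplying that inequality by $(1+t)$ and integrating, with the $(1+\tau)$-weighted bounds of Lemmas~\ref{Lemma 3.9}--\ref{Lemma 3.10} feeding the right-hand side, gives the same quantities carrying an extra factor $(1+t)$, in particular $\int_0^t(1+\tau)\|z_t\|^2\,\mathrm{d}\tau\le C(\|w_0\|_2^2+\|z_0\|_2^2)$.

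Finally, to reach the $(1+t)^2$ weight claimed in \eqref{3.2.34}, I would multiply the earlier inequality — the analogue of \eqref{2.3.48} displayed above, whose right-hand side is merely $C(\varepsilon_0+\delta_0)(1+t)^{-1}\|z_t\|^2$ — by $(1+t)^2$ and integrate: the resulting $(1+t)\|z_t\|^2$ is integrable in time by the previous step, and the boundary term $2\int_0^t(1+\tau)\big(\tfrac{\lambda w_t^2}{2}+\tfrac{Kz_t^2}{2}-\mu w_tz_t\big)\mathrm{d}\tau$ produced by integrating by parts in $\tau$ is controlled likewise. This delivers \eqref{3.2.34}. The one point that requires care is precisely this choice at the last step: the $(1+t)^2$ weight must be applied to the "unresolved" inequality (right-hand side $(1+t)^{-1}\|z_t\|^2$) rather than to its resolved form (right-hand side $\|w_{xx}\|^2+\|z_{xx}\|^2$), since $\int_0^\infty(1+t)^2(\|w_{xx}\|^2+\|z_{xx}\|^2)\,\mathrm{d}t$ need not converge whereas $\int_0^\infty(1+t)\|z_t\|^2\,\mathrm{d}t$ does. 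Apart from this bookkeeping the argument is routine; in particular, unlike the Dirichlet case of Lemmas~\ref{Lemma 3.6}--\ref{Lemma 3.7}, no delicate boundary estimates are needed here, as \eqref{3.2.9} forces every boundary term to vanish.
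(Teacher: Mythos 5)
Your proposal is correct and follows essentially the same route as the paper's own proof: the same weighted energy combination $\lambda w_t\times\partial_t(3.2.3)_2-\mu z_t\times\partial_t(3.2.3)_2$ plus the $Kz_t\times\partial_t(3.2.3)_1$ companion, the vanishing of all boundary terms via \eqref{3.2.9}, the substitution of $\|z_t\|^2$ through the equation $(3.2.3)_1$, and the staged weights $1\to(1+t)\to(1+t)^2$ with the final $(1+t)^2$ applied to the unresolved inequality \eqref{3.2.40}. The caveat you flag at the end is exactly the bookkeeping the paper performs in passing from \eqref{3.2.42}/\eqref{3.2.44} back to \eqref{3.2.40}.
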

\begin{proof}
Similarly, from $\int_{\mathbb{R}^+}\lambda w_{t}\times\partial_t\eqref{3.2.3}_2\mathrm{d}x-\int_{\mathbb{R}^+}\mu z_{t}\times\partial_t\eqref{3.2.3}_2\mathrm{d}x$, from $\partial_t\eqref{3.2.3}_1$ and  \eqref{3.2.9}, we can obtain
\begin{equation}\label{3.2.35}
\begin{split}
&\frac{\rm d}{{\rm d}t} \int_{\mathbb{R}^+}
   \left(\frac{\lambda w_{t}^2}{2}-\mu w_{t}z_{t}\right)\mathrm{d}x
   +{b\lambda}\int_{\mathbb{R}^+}w_{xt}^2\mathrm{d}x
   +\int_{\mathbb{R}^+} (\lambda w_{t}-\mu z_{t})^2\mathrm{d}x\\
&=(a+b)\mu\int_{\mathbb{R}^+}w_{xt}z_{xt}\mathrm{d}x-\kappa\mu\int_{\mathbb{R}^+}w_{xt}[(z+u_+)w_x]_{t}\mathrm{d}x-bw_{xt}(0,t)(\lambda w_{t}-\mu z_{t})(0,t)\\
&~~~~+\mu w_{t}(0,t)\{az_{xt}-\kappa[(z+u_+)w_x]_t\}(0,t)\\
&=(a+b)\mu\int_{\mathbb{R}^+}w_{xxx}z_{xxx}\mathrm{d}x
-\kappa\mu\int_{\mathbb{R}^+}w_{xt}w_{x}z_{t}\mathrm{d}x
-\kappa\mu\int_{\mathbb{R}^+}(z+u_+)w_{xt}^2\mathrm{d}x.
\end{split}
\end{equation}
Firstly, using Young inequality and \eqref{3.2.8}, it follows that
\begin{equation}\label{3.2.36}
\begin{split}
-\kappa\mu\int_{\mathbb{R}^+}w_{xt}w_{x}z_{t}\mathrm{d}x
&\leq \eta\int_{\mathbb{R}^+}w_{xt}^2\mathrm{d}x+C_{\eta}\int_{\mathbb{R}^+}\|w_x\|_{L^{\infty}}^2z_{t}^2\mathrm{d}x\\
&\leq \eta\int_{\mathbb{R}^+}w_{xt}^2\mathrm{d}x+C_{\eta}\varepsilon_0^2(1+t)^{-\frac{3}{2}}\int_{\mathbb{R}^+}z_{t}^2\mathrm{d}x.
\end{split}
\end{equation}
Next, by using the similar method as \eqref{3.2.29}, we have
\begin{equation}\label{3.2.37}
\begin{split}
-\kappa\mu\int_{\mathbb{R}^+}(z+u_+)w_{xt}^2\mathrm{d}x
\leq C\int_{\mathbb{R}^+}(\|z\|_{L^{\infty}}+|u_+|)w_{xt}^2\mathrm{d}x
\leq C(\varepsilon_0+\delta_0)\int_{\mathbb{R}^+}w_{xt}^2\mathrm{d}x.
\end{split}
\end{equation}
Taking \eqref{3.2.36} and \eqref{3.2.37} into \eqref{3.2.35}, and using Young inequality, we can conclude that
\begin{equation}\label{3.2.38}
\begin{split}
&\frac{\rm d}{{\rm d}t}\int_{\mathbb{R}^+} \left(\frac{\lambda w_{t}^2}{2}-\mu w_{t}z_{t}\right) \mathrm{d}x
+\frac{b\lambda}{4}\int_{\mathbb{R}^+}w_{xt}^2\mathrm{d}x
+\int_{\mathbb{R}^+}{(\lambda w_{t}-\mu z_{t})^2}\mathrm{d}x\\
\leq&
   C\varepsilon_0(1+t)^{-\frac{3}{2}}\int_{\mathbb{R}^+}z_{t}^2\mathrm{d}x
   +C\int_{\mathbb{R}^+}z_{xt}^2\mathrm{d}x.
\end{split}
\end{equation}
Now, we try to treat the second term in the right-hand side of the above inequality. Multiplying $\partial_t\eqref{3.2.3}_1$ by $K z_{t}$ ($K$ is sufficiently large) and integrating it with respect to $x$ over $\mathbb{R}^+$, we can derive
\begin{equation}\label{3.2.39}
\begin{split}
\frac{\rm d}{{\rm d}t} \int_{\mathbb{R}^+}\frac{ K z_{t}^2}{2}\mathrm{d}x+\frac{K a}{2}\int_{\mathbb{R}^+}z_{xt}^2\mathrm{d}x
&\leq C(\varepsilon_0+\delta_0)\int_{\mathbb{R}^+}w_{xt}^2\mathrm{d}x+C\varepsilon_0(1+t)^{-\frac{3}{2}}\int_{\mathbb{R}^+}z_{t}^2\mathrm{d}x\\
&~~~~+K z_{t}(0,t)\{-az_{xt}+\kappa[(z+u_+)w_x]_t\}(0,t)\\
&\leq C(\varepsilon_0+\delta_0)\int_{\mathbb{R}^+}w_{xt}^2\mathrm{d}x+C\varepsilon_0(1+t)^{-\frac{3}{2}}\int_{\mathbb{R}^+}z_{t}^2\mathrm{d}x,
\end{split}
\end{equation}
Combining \eqref{3.2.38} and \eqref{3.2.39}, one can immediately obtain
\begin{equation}\label{3.2.40}
\begin{split}
&\frac{\rm d}{{\rm d}t} \int_{\mathbb{R}^+}
   \left(\frac{\lambda w_{t}^2}{2} +\frac{K z_{t}^2}{2}-\mu w_{t}z_{t}\right) \mathrm{d}x
   +\frac{b\lambda}{8}\int_{\mathbb{R}^+}w_{xt}^2\mathrm{d}x+\frac{K a}{4}\int_{\mathbb{R}^+}z_{xt}^2\mathrm{d}x
   +\int_{\mathbb{R}^+}{(\lambda w_{t}-\mu z_{t})^2}\mathrm{d}x\\
   \leq&
C\varepsilon_0(1+t)^{-\frac{3}{2}}\int_{\mathbb{R}^+}z_{t}^2\mathrm{d}x.
\end{split}
\end{equation}
By using the equation $\eqref{3.2.3}_1$ and \eqref{3.2.8}, it follows that
\begin{equation}\label{3.2.41}
\int_{\mathbb{R}^+}z_{t}^2\mathrm{d}x
\leq C\int_{\mathbb{R}^+}z_{xx}^2\mathrm{d}x+C\int_{\mathbb{R}^+}w_{xx}^2\mathrm{d}x+C(1+t)^{-\frac{3}{2}}\int_{\mathbb{R}^+}w_{x}^2\mathrm{d}x.
\end{equation}
Noting that $(\lambda w_{t})^2\leq 2(\lambda w_t-\mu z_t)^2+2(\mu z_t)^2$, combining \eqref{3.2.40} and \eqref{3.2.41} and choosing $\varepsilon_0$ small enough yields
\begin{equation}\label{3.2.42}
\begin{split}
&\frac{\rm d}{{\rm d}t} \int_{\mathbb{R}^+}
   \left(\frac{\lambda{w_{t}}^2}{2} +\frac{K z_{t}^2}{2}-\mu w_{t}z_{t}\right) \mathrm{d}x
   +\frac{b\lambda}{8}\int_{\mathbb{R}^+}w_{xt}^2\mathrm{d}x+\frac{K a}{4}\int_{\mathbb{R}^+}z_{xt}^2\mathrm{d}x+\frac{\lambda^2}{2}\int_{\mathbb{R}^+}w_t^2\mathrm{d}x\\
   &+\frac{1}{2}\int_{\mathbb{R}^+}z_t^2\mathrm{d}x
\leq C\int_{\mathbb{R}^+}(w_{xx}^2+z_{xx}^2)\mathrm{d}x+C(1+t)^{-\frac{3}{2}}\int_{\mathbb{R}^+}w_{x}^2\mathrm{d}x.
\end{split}
\end{equation}
Integrating \eqref{3.2.42} with respect to $t$, then employing \eqref{3.2.10} and \eqref{3.2.16}, we obtain
\begin{equation}\label{3.2.43}
\begin{split}
&\|w_{t}(t)\|^{2}+\|z_{t}(t)\|^{2}+\int_{0}^{t}\left(\left\|w_{xt}(\tau)\right\|^{2}+\|z_{xt}(\tau)\|^{2}+\|w_{t}(\tau)\|^2+\|z_{t}(\tau)\|^2\right) \mathrm{d} \tau\\
\leq& C\left(\left\|w_{0}\right\|_{2}^{2}+\left\|z_{0}\right\|_{2}^{2}\right).
\end{split}
\end{equation}
Multiplying \eqref{3.2.42} by $(1 + t)$, we integrate it to obtain
\begin{equation}\label{3.2.44}
\begin{split}
&(1+t)(\|w_{t}(t)\|^{2}+\|z_{t}(t)\|^{2})+\int_{0}^{t}(1+\tau)\left(\left\|w_{xt}(\tau)\right\|^{2}+\|z_{xt}(\tau)\|^{2}+\|w_{t}(\tau)\|^2+\|z_{t}(\tau)\|^2\right) \mathrm{d} \tau\\
\leq& C\left(\left\|w_{0}\right\|_{2}^{2}+\left\|z_{0}\right\|_{2}^{2}\right).
\end{split}
\end{equation}
At last, multiplying \eqref{3.2.40} by $(1+t)^2$, then using \eqref{3.2.44}, we derive \eqref{3.2.34}. The proof of Lemma \ref{Lemma 3.11} is completed.
\end{proof}
Recalling Lemmas \ref{Lemma 3.8}-\ref{Lemma 3.11}, we complete the proof of Theorem \ref{Thm 3.2}.
}

\vspace{6mm}

\noindent {\bf Acknowledgements:} The research was supported by the National Natural Science Foundation of China $\#$12171160, 11771150, 11831003 and Guangdong Basic and Applied Basic Research Foundation $\#$2020B1515310015.

{\small

\bibliographystyle{plain}

}

\end{document}